\newcommand{\dense}{\cal{S}_{c} (\R)}
\newcommand{\supp}{\mathrm{supp}}
\begin{document}
\title[The Steklov problem for Krein Systems generated by an $A_2 (\R)$ weight]{The Steklov problem and Remainder Estimates for Krein Systems generated by a Muckenhoupt weight}
\author{Michel Alexis}

\address{
\begin{flushleft}
  Michel Alexis: alexism@mcmaster.ca\\\vspace{0.1cm}
	McMaster University\\  Department of Mathematics \& Statistics\\
1280 Main St West, Hamilton, ON, L8S 4L8, Canada\vspace{0.1cm}\\
\end{flushleft}
}

\thanks{I am thankful to Sergey Denisov for helpful discussions and advising on this topic.}

\begin{abstract}
We show that solutions to Krein systems, the continuous frequency analogue of orthogonal polynomials on the unit circle, generated by an $A_2 (\R)$ weight $w$ satisfying $w-1 \in L^1 (\R) + L^2 (\R)$, are uniformly bounded in $L^p _{\mathrm{loc}} (w, \R)$ for $p$ sufficiently close to $2$. This provides a positive answer to the Steklov problem for Krein systems. Furthermore, we define a ``remainder'' which measures the difference between the solution to a Krein system and a polynomial-like approximant, and we estimate these remainders in $L^p _w (\R)$ for $w \in A_2 (\R)$ satisfying some additional conditions. Such polynomial-like approximants, and hence remainder estimates, seem unique to Krein systems, with no analogue for orthogonal polynomials on the unit circle.\end{abstract}

\maketitle
\tableofcontents

\section{Introduction}

Let $d \sigma \ddd w(\lambda) \frac{d \lambda}{2 \pi}$ be a measure on $\R$ with weight $w\geq 0$ satisfying $w-1 \in L^1 (\R) + L^2 (\R)$. Then one can define a family of ``orthonormal continuous polynomials'' $\{P(r, \lambda ; \sigma )\}_{r \geq 0}$ with the following properties.
\begin{enumerate}[label=(\roman*)]
	\item 
	  $P(r, \lambda ; \sigma)$ is a ``continuous polynomial'' in $\e{i \lambda}$ of ``degree'' $r$, where $\e{x} \ddd e^{x}$: for each $r \geq 0$ there exists $B(r, \cdot) \in L^2([0,r])$ so that
		\[
			P(r, \lambda ; \sigma) = \e{i \lambda r} + \int\limits_0 ^r B(r, s) \e{i \lambda s} \, ds \, .
			\] 
		\item 
 $\{P(r, \lambda; \sigma)\}_{r \geq 0}$ is an orthonormal system with respect to $\sigma$, i.e.\ for all $f,g \in L^2 (\R^+)$ we have equality of the inner-products
 \begin{align}
	 \langle f(s), g(s) \rangle_{(ds \, , \, \R_+)} &= \langle \int\limits_0 ^{\infty} f(s) P(s, \lambda;\sigma) \, ds, \int\limits_0 ^{\infty} g(s) P(s, \lambda;\sigma ) \, ds \rangle_{ (d \sigma (\lambda), \R)} \, . \label{orthonormal_system}
 \end{align}
Equivalently, the generalized Fourier transform $\mathcal{O}$ is an isometry from $L^2 (\R^+)$ to $L^2 _{\sigma} (\R)$, where
		\[
		\mathcal{O} f(\lambda) \ddd \int\limits_0 ^{\infty} f(s) P(s, \lambda;\sigma) \, ds \, . 
		\]  
	\item 
	  $P(r, \lambda; \sigma)$ is the solution to the Krein differential system \eqref{diff_Krein_system_1}, whose coefficients $A(r)$ uniquely determine $P(r, \lambda; \sigma)$. As such $P(r, \lambda; \sigma)$ is referred to as ``the solution to the Krein system.''
\end{enumerate}

For instance, when $d \sigma = \frac{d \lambda}{2 \pi}$, then $P(r, \lambda ; \sigma) = \e{i \lambda r}$, so $\{P(r, \lambda; \sigma) \}_{r \geq 0}$ is simply the Fourier orthonormal system $\{\e{i \lambda r}\}_{r \geq 0}$. See e.g. references \cite{Krein1954, denisov_krein} for more information on Krein systems.

\beginntn When the measure $\sigma$ is clear from context, we simply write $P(r, \lambda)$. And given a weight $w$ we write $P(r, \lambda ; w) \ddd P \left ( r, \lambda; w \frac{d \lambda}{2 \pi} \right)$.\smallskip

First introduced in 1954 by Krein in \cite{Krein1954}, solutions $P(r, \lambda)$ to Krein systems are the continuous frequency analogue of Orthogonal Polynomials on the Unit Circle (OPUC): given a finite measure $\mu$ on the unit circle $\T \subset \C$ with infinite support, the OPUC consist of an orthonormal sequence $\{\phi_n\}_{n \geq 0}$ in $L^2 _{\mu} (\T)$, where each $\phi_n$ is a degree $n$ polynomial over $\C$ with positive leading coefficient. One can obtain this sequence by, e.g., applying the Gram-Schmidt algorithm to the polynomials $\{1, z, z^2, \ldots\}$. See \cite{Simonbook} for a robust reference on OPUC.

Consider the following problem for Krein systems.
\begin{Prob}[The Steklov problem for Krein systems]
	If $d \sigma = \frac{w (\lambda) }{2 \pi} \, d \lambda$, what conditions on $w$ guarantee there exists $p > 2$ such that $\{P(r, \lambda;\sigma)\}_{r \geq 0}$ is bounded in $L^p _{loc} (\sigma,\R)$? More precisely, when does there exist $p>2$ such that for each compact $\Delta \subset \R$,
\begin{equation}\label{Steklov_Krein}
	\sup\limits_{r\geq 0} \|P(r, \cdot ; \sigma)\|_{L^p _w (\Delta)} < \infty \, ?
\end{equation}
\end{Prob}

The Steklov problem and estimates like \eqref{Steklov_Krein} are related to the following problem: for which measures $\sigma$ is the maximal function 
\[
  M_{\sigma} (\lambda) \ddd \sup\limits_{r \geq 0} |P(r, \lambda ;\sigma)| 
      \] finite at almost every $\lambda \in \R$? Known as nonlinear Carleson's Theorem, this was recently proved in \cite{poltoratski2021pointwise} for a conjectured class of measures.

The Steklov problem was originally posed for orthogonal polynomials: given the OPUC $\{\phi_n\}_{n \geq 0}$ generated by the measure $d \mu = w \frac{d \theta}{2 \pi}$ on the unit circle $\T$, what conditions on $w$ guarantee there exists $p > 2$ for which
\begin{equation}\label{Steklov_OPUC}
\sup\limits_{n \geq 0} \|\phi_n\|_{L^p _w (\T)} < \infty ?
\end{equation} Nazarov first showed that when $w, w^{-1} \in L^{\infty} (\T)$, then \eqref{Steklov_OPUC} holds for some $p>2$ (\cite{denik1}). Then in \cite{denik2}, Denisov-Rush generalized Nazarov's result to the case when $w, w^{-1} \in \mathrm{BMO} (\T)$. And most recently in \cite{AlexisAD1}, Alexis-Aptekarev-Denisov further generalized these results to the case when $w \in A_2 (\T)$. Since Krein systems are the continuous analogue of OPUC, we adapt the most recent successes in \cite{AlexisAD1} to solutions of Krein systems generated by a weight $w \in A_2 (\R)$ with $A_2 (\R)$ as defined below (see \cite[p.194]{stein}).\smallskip

\noindent{\bf Definition.}
Let $p\in (1,\infty)$. The weight $w\in A_p(\R)$ if
\begin{equation*}
  [w]_{A_p(\R)}\ddd\sup_{I} \, \left \langle w \right \rangle_I \, \left \langle w^{\frac{1}{1-p}} \right \rangle_I^{{p-1}}<\infty,\,
\end{equation*}
where $I$ ranges over the finite intervals in $\R$. \smallskip

Note \eqref{Steklov_Krein} will immediately follow if we can show there exists $\delta = \delta (w)> 0$ such that if $p \in [2, 2+ \delta)$, then
	\begin{equation}\label{P_uniform_bd_basic}
		\sup\limits_{r \geq 0} \| P(r, \lambda ; w)- \e{i \lambda r}\|_{L^p _{w } (\R)} <\infty \, .
	\end{equation}
	Whence the first main theorem of this paper below, for which we recall $L^{p_1} (\R) + L^{p_2} (\R) \subset L^1 (\R) + L^2 (\R)$ when $p_1, p_2 \in [1,2]$.

\begin{Thm}\label{A2_Thm_Krein}
	Suppose that $w-1 =u_1 + u_2$ where $u_1 \in L^{p_1} (\R) , u_2 \in L^{p_2} (\R)$ with $1 \leq p_1 \leq p_2 \leq 2$. If $[w]_{A_2 (\R)} \leq \gamma$, then
	\begin{enumerate} [label=(\alph*)]
	  \item \label{general_Krein_A2}	there exists $\epsilon(\gamma) \in (0, \frac{1}{2})$ with $\lim\limits_{\gamma \to 1} \epsilon (\gamma) = \frac{1}{2}$ such that for any $p \in [p_2, \infty)$ satisfying $\left |\frac{1}{2} - \frac{1}{p} \right | < \epsilon(\gamma)$, we have
			\begin{equation*}
		\sup\limits_{r \geq 0 } \|P(r, \lambda; w ) - \e{i \lambda r}\|_{L^p _{w} (\R)} < \infty \, .
			\end{equation*}

		      \item \label{perturbative_Krein_A2} for any $p \in [p_2, \infty)$ there exists $\tau_0 (p) \in (0,1)$ such that whenever $\tau \ddd [w]_{A_2 (\R)} - 1 \leq \tau_0 (p)$, we have
	\begin{equation}\label{perturbative_estimate_A2}
	  \sup\limits_{r \geq 0 } \|P(r, \lambda;w ) - \e{i \lambda r}\|_{L^p _w (\R)} \lesssim_{p} \tau ^{\frac{p-p_2}{2p}} (\tau^{\frac{p_2 - p_1}{2}} \|u_1\|_{L^{p_1} (\R) } ^{p_1}+ \|u_2\|_{L^{p_2} (\R)} ^{p_2})^{\frac{1}{p}}\, .
		\end{equation}
	\end{enumerate}
	\end{Thm}

	Solutions to the Krein system generate generalized eigenfunctions for a unique Dirac operator $\cal{D}$ with spectral measure $2 \, d \sigma$. Furthermore, if $A(r)$, the coefficient of the Krein system \eqref{diff_Krein_system_1}, is real-valued and in $L^2 (\R^+)$, then when written in its canonical form, $\cal{D}^2$ is a diagonal operator whose entries are Schr\"odinger operators. Thus, Theorem \ref{A2_Thm_Krein} allows one to obtain $L^p _{w} (\R)$-information on generalized eigenfunctions for the Dirac and Schr\"odinger operators. See e.g. \cite[Sections 14-16]{denisov_krein} for more details.

	We note the following corollaries of Theorem \ref{A2_Thm_Krein}.

	\begin{corollary}\label{Cor_limits_A2}
	  If $w-1 \in L^{p_1}(\R)  + L^{p_2} (\R)$ for $1\leq p_1 \leq p_2 \leq 2$, then for any $p \in (p_2, \infty)$, we have 
	  \begin{equation*}
		\lim\limits_{[w]_{A_2 (\R)} \to 1} \sup\limits_{r \geq 0} \|P(r, \lambda; w) - \e{i \lambda r}\|_{L^p _w (\R)} = 0
			\end{equation*}
			where the limit is taken among weights satisfying $\|w-1\|_{L^{p_1} (\R)+L^{p_2} (\R) } \leq C$, with $C \geq 0$ an arbitrary absolute constant.
		      \end{corollary}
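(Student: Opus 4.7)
The plan is to simply unpack Theorem \ref{A2_Thm_Krein}\ref{perturbative_Krein_A2} and observe that the right-hand side of \eqref{perturbative_estimate_A2} has a factor that vanishes as $[w]_{A_2(\R)}\to 1$.

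First, fix $p\in(p_2,\infty)$ and the absolute constant $C\geq 0$. For any weight $w$ in the family, choose a decomposition $w-1=u_1+u_2$ with $u_j\in L^{p_j}(\R)$ and
\[
\|u_1\|_{L^{p_1}(\R)}+\|u_2\|_{L^{p_2}(\R)}\leq 2\|w-1\|_{L^{p_1}(\R)+L^{p_2}(\R)}\leq 2C,
\]
which is possible by the definition of the sum-space norm. Set $\tau\ddd [w]_{A_2(\R)}-1$. Since we are taking $[w]_{A_2(\R)}\to 1$, we have $\tau\to 0^+$, and in particular eventually $\tau\leq \tau_0(p)$, so that Theorem \ref{A2_Thm_Krein}\ref{perturbative_Krein_A2} is applicable.

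Applying \eqref{perturbative_estimate_A2} gives
\[
\sup_{r\geq 0}\|P(r,\lambda;w)-\e{i\lambda r}\|_{L^p_w(\R)}\lesssim_p \tau^{\frac{p-p_2}{2p}}\bigl(\tau^{\frac{p_2-p_1}{2}}\|u_1\|_{L^{p_1}(\R)}^{p_1}+\|u_2\|_{L^{p_2}(\R)}^{p_2}\bigr)^{\frac{1}{p}}.
\]
The quantity inside the parentheses is bounded above, uniformly as $\tau\to 0^+$, by $\|u_1\|_{L^{p_1}(\R)}^{p_1}+\|u_2\|_{L^{p_2}(\R)}^{p_2}\leq (2C)^{p_1}+(2C)^{p_2}$, using $\tau\leq 1$ and $p_2\geq p_1$. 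The key point is that the strict inequality $p>p_2$ forces the exponent $\frac{p-p_2}{2p}$ to be strictly positive, so the prefactor $\tau^{\frac{p-p_2}{2p}}$ tends to $0$ as $\tau\to 0^+$. Taking $\tau\to 0^+$ yields the claim.

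There is no real obstacle here: the entire content is packaged in Theorem \ref{A2_Thm_Krein}\ref{perturbative_Krein_A2}, and the corollary is just the observation that the bound in \eqref{perturbative_estimate_A2} has a positive power of $\tau$ out front precisely when $p>p_2$, together with the uniform bound on $\|u_1\|_{L^{p_1}}$ and $\|u_2\|_{L^{p_2}}$ afforded by the hypothesis $\|w-1\|_{L^{p_1}+L^{p_2}}\leq C$.
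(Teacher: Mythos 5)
Your proof is correct and is exactly the paper's argument: the paper's entire proof of Corollary \ref{Cor_limits_A2} is ``Take $\tau \to 0$ in \eqref{perturbative_estimate_A2},'' and you have simply spelled out the details (the near-optimal decomposition of $w-1$, the uniform bound on the bracketed factor, and the observation that $p>p_2$ makes the exponent of $\tau$ strictly positive). Nothing to add.
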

\begin{proof}
Take $\tau \to 0$ in \eqref{perturbative_estimate_A2}.
\end{proof}

Corollary \ref{Cor_limits_A2} states that as $w$ becomes flatter, $P(r, \lambda)$ tends to standard exponential, demonstrating some continuity of solutions to Krein systems with respect to the measure.

\begin{corollary}\label{Cor_Lp_existence}
	Suppose $w-1 \in L^{p_1} (\R) + L^{p_2} (\R)$ with $1 \leq p_1 \leq p_2 < 2$. If $[w]_{A_2 (\R)} \leq \gamma$, then there exists $\delta (\gamma, p_2) > 0$ such that
	\[
		P(r, \lambda; w) - \e{i \lambda r} \in L^p _w (\R)
		\]
		for all $r > 0$, $p \in [2 - \delta, \infty)$.
\end{corollary}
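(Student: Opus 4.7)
The plan is to reduce this to Theorem~\ref{A2_Thm_Krein}\ref{general_Krein_A2} by exploiting the elementary fact that, for each fixed $r>0$, the difference $P(r, \lambda; w) - \e{i \lambda r}$ is bounded in $\lambda$. This $L^{\infty}_\lambda$ bound lets us bootstrap $L^{2-\delta}_w$-membership to $L^{p}_w$-membership for every larger $p$.

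To implement this, I would first choose $\delta = \delta(\gamma, p_2) > 0$ so that $p_0 \ddd 2 - \delta$ simultaneously satisfies $p_0 \geq p_2$ and $\left|\tfrac{1}{2} - \tfrac{1}{p_0}\right| < \epsilon(\gamma)$, where $\epsilon(\gamma) \in (0, \tfrac{1}{2})$ is the constant from Theorem~\ref{A2_Thm_Krein}\ref{general_Krein_A2}. Such $\delta$ exists because $p_2 < 2$ and $\epsilon(\gamma) > 0$; any positive $\delta$ below $\min\{2 - p_2,\, 4\epsilon(\gamma)/(1 + 2\epsilon(\gamma))\}$ works. Theorem~\ref{A2_Thm_Krein}\ref{general_Krein_A2} then delivers
\[
\sup_{r \geq 0} \|P(r, \lambda; w) - \e{i \lambda r}\|_{L^{p_0}_w(\R)} < \infty .
\]

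For the remaining step, property (i) of the Krein system gives the representation $P(r, \lambda; w) - \e{i \lambda r} = \int_0^r B(r, s) \e{i \lambda s} \, ds$ with $B(r, \cdot) \in L^2([0,r])$, so Cauchy--Schwarz supplies, for each fixed $r > 0$, the pointwise bound
\[
|P(r, \lambda; w) - \e{i \lambda r}| \leq r^{1/2} \|B(r, \cdot)\|_{L^2([0,r])} =: C_r < \infty
\]
uniformly in $\lambda$. Consequently, for any $p \in [p_0, \infty)$,
\[
\int_{\R} |P(r, \lambda; w) - \e{i \lambda r}|^p w(\lambda) \, d\lambda \leq C_r^{p - p_0} \int_{\R} |P(r, \lambda; w) - \e{i \lambda r}|^{p_0} w(\lambda) \, d\lambda < \infty,
\]
which is the desired $L^p_w(\R)$-membership. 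There is no genuine obstacle: Theorem~\ref{A2_Thm_Krein} does all the analytic work, and the extension to every $p \geq p_0$ reduces to the trivial interpolation between $L^{p_0}_w$ and $L^\infty_\lambda$ afforded by the continuous-polynomial representation of $P(r,\lambda;w)$.
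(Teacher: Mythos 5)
Your proof is correct and follows the paper's route: the paper's own proof is simply an invocation of Theorem~\ref{A2_Thm_Krein}\ref{general_Krein_A2} for the nontrivial range $p<2$, with the range $p\geq 2$ already covered by Proposition~\ref{regularity_P}. Your explicit handling of large $p$ via the pointwise bound $|P(r,\lambda;w)-\e{i\lambda r}|\leq C_r$ (which is exactly \eqref{P_loc_Linfinity}) is a clean, equivalent substitute for that a priori result, and your choice of $\delta$ is verified correctly.
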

\begin{proof}
Apply Theorem \ref{A2_Thm_Krein} \ref{general_Krein_A2}.
\end{proof}

Corollary \ref{Cor_Lp_existence} is non-trivial, since as we discuss in Section \ref{Decay_P}, a priori all we can say about $P(r, \lambda; w) - \e{i \lambda r}$ is that it belongs in $L^p _w (\R)$ for $p \in [2, \infty)$.

The requirement that $p \geq p_2$ in Theorem \ref{A2_Thm_Krein} is sharp in the sense of Proposition \ref{minimum_p_prop} below, which morally says that the decay of $\lambda \mapsto P(r, \lambda ;w) - \e{i \lambda r}$ cannot exceed the decay of $\lambda \mapsto w(\lambda)-1$.
\begin{Prop}\label{minimum_p_prop}
  If $p_2 \in (1,2]$, then for every $\delta> 0$, there exists a weight $w$ such that $w -1 \in L^{p_2} (\R)$, $[w]_{A_2 (\R)} \leq 1 + \delta$ and 
  \[
    \sup\limits_{r \geq 0} \|P(r, \lambda ; w) - \e{i \lambda r}\|_{L^p _w (\R)} = \infty
  \]
  for every $p < p_2$. 
\end{Prop}

In the same vein as Theorem \ref{A2_Thm_Krein} \ref{general_Krein_A2}, one may also estimate a ``remainder'' term, which we define precisely in Section \ref{remainder_A2_section}. Let $\langle \lambda \rangle \ddd (1+\lambda^2)^{1/2}$. If $\langle \lambda \rangle^k (w-1) \in L^1(\R)$ then the remainder function $R_{k,r} (\lambda)$ exists and satisfies
\begin{equation*}
R_{k,r} (\lambda) = \lambda ^k (P(r, \lambda) - \e{i \lambda r}) - \sum\limits_{l=0}^{k-1} \lambda^{l} a_{k-l, r} (\lambda) 
\, ,
\end{equation*}
where $a_{l, r} (\lambda) \in L^{\infty} (\R)$, $l=1, \dots, k$. Note that $R_{k, r}$ measures how much $\lambda^k (P(r, \lambda) -\e{i \lambda r})$ deviates from a polynomial-like term of ``degree'' $k-1$. In Theorem \ref{Remainder_A2_Thm} below, we estimate $\|R_{1,r}\|_{L^p _w (\R)}$, which quantifies the decay of $P(r, \lambda;w )-\e{i \lambda r}$, providing information with no analogue for OPUC. Furthermore, the assumptions of Theorem \ref{Remainder_A2_Thm} below imply
\[
a_{1,r} (\lambda) = \e{i \lambda r} \alpha_{\infty} (r) + \alpha_2 (r) \, ,
\] where each $\alpha_q (r)$ does not depend on $\lambda$, $\alpha_q \in L^q (\R^+)$, and $\lim\limits_{r \to \infty} \alpha_{\infty} (r)$ exists (see Lemma \ref{a_L2}); as such, the estimate on $\|R_{1,r}\|_{L^p _w (\R)}$ below also qualifies the behavior of the ``continuous polynomials'' $P(r, \lambda;w)$ for $r$ large. We will need the following definitions for what follows. 
\begin{itemize}
  \item Let $H^2 (\Omega)$ denotes the Hardy space in the domain $\Omega \subset \C$.
  \item If $B$ is a Banach space with norm $\| \cdot \|_B$, $(\mu, X)$ is a measure space and $p \in [1, \infty]$, let 
\[
  L^p _{\mu} (X; B) \ddd \left \{ f:X \to B ~\biggr |~ \|f\|_{L^p _{\mu} (X;B)} \ddd \left \| \|f\|_{B} \right \|_{L^p _{\mu} (X)}  < \infty \right \} \, . 
\]
  \item Given two Banach spaces $B_1, B_2$ with norms $\|\cdot\|_{B_1}$ and $\|\cdot\|_{B_2}$, for each $f \in B_1 + B_2$ define 
\[
\|f\|_{B_1 + B_2} \ddd \inf\limits_{f = f_1 + f_2} \|f_1\|_{B_1} + \|f_2\|_{B_2} \, .
	\]
    \end{itemize}

\begin{Thm}\label{Remainder_A2_Thm}
	Suppose that $\langle \lambda \rangle^q (w-1)\in L^1 (\R)$ for some $q > 2$ and
		\begin{equation}\label{Szego_fn_Hardy}
			\mathrm{exp} \left ( \frac{1}{2 \pi i} \int\limits_{-\infty} ^{\infty} \frac{\log (w (t))}{t- \lambda} \, dt \right ) - 1 = \int\limits_0 ^{\infty} h(x) \e{i \lambda x} \, dx  \in H^2 (\C^+) \, .
		      \end{equation} If $[w]_{A_2 (\R)} \leq \gamma$, then there exists $\epsilon (\gamma) > 0$ such that whenever $p \in (1, q]$ satisfies $\left |\frac{1}{p} - \frac{1}{2} \right | < \epsilon (\gamma)$, then
\[
  \|R_{1, r} (\lambda)\|_{L^2 _{d r} (\R^+ ;  L^p _{w}(\R))+L^{\infty} _{d r} (\R^+ ;  L^p _{w}(\R))} < \infty \, .
	\]
\end{Thm}
 
Compare Theorem \ref{Remainder_A2_Thm} and Theorem \ref{A2_Thm_Krein} \ref{general_Krein_A2}, the latter of which can be understood as stating that the $L^{\infty} _{dr} ( \R^+ ; L^p _w (\R))$-norm of $0$th order remainder $R_{0, r}$ is finite.

We can also estimate higher order remainders when the weight $w$ is very close to $1$.
\begin{Thm}\label{Thm_remainder_Linfinity}
  Suppose $(w-1)\langle \lambda \rangle^{k} \in L^1 (\R)$ for some integer $k \geq 0$. If $\|(w-1) \langle \lambda \rangle^k \|_{L^{\infty} (\R)} \leq \delta$, for some $\delta \in [0,1)$, then
\begin{enumerate}[label=(\alph*)]
  \item \label{general_Nazarov_remainder} there exists $\epsilon(\delta) \in (0, \frac{1}{2})$, with $\lim\limits_{\delta \to 0} \epsilon (\delta) = \frac{1}{2}$, such that for all $p$ satisfying $\left | \frac{1}{2} - \frac{1}{p} \right | < \epsilon(\delta)$,
    \[
      \sup\limits_{r > 0} \|R_{k,r} (\lambda)\|_{L^p (\R)} < \infty \, .
    \]
		
	\item \label{perturbative_Nazarov_remainder}
	  for any $p \in  (1, \infty)$, there exists $\delta_0(p) > 0$ such that for all $\delta \in (0, \delta_0 (p))$, we have
	  \[
	    \sup\limits_{r > 0} \|R_{k,r} (\lambda)\|_{L^p (\R)} \lesssim_{p,k} \delta^{1-\frac{1}{p} } \left ( \|\langle \lambda \rangle ^k (w-1) \|_{L^1 (\R)}^{\frac{1}{p}} + \|\langle \lambda \rangle ^k (w-1) \|_{L^1 (\R)} ^{1+\frac{1}{p}} \right )\, .
		\]
	\end{enumerate}
\end{Thm}
In particular Theorem \ref{Thm_remainder_Linfinity} \ref{perturbative_Nazarov_remainder} has the following corollary, which shows some continuity of $R_{k ,r}$ with respect to the measure.
\begin{Cor}
  If $\|(w-1)\langle \lambda \rangle^{k}\|_{L^1 (\R)} \leq C$, where $C$ is any fixed constant and $k$ a nonnegative integer, then for any fixed $1< p < \infty$ we have
	\[
		\lim\limits_{\|(w-1) \langle \lambda \rangle^k \|_{L^{\infty} (\R)} \to 0} \sup\limits_{r > 0} \|R_{k,r} (\lambda)\|_{L^p (\R)} = 0 \, .
		\]

\end{Cor}

While examples of weights to which we can apply Theorem \ref{Thm_remainder_Linfinity} are easy to find, e.g.\ any weight satisfying $|w-1| \leq \frac{\delta}{\langle \lambda \rangle^{k+2}}$, let's mention some weights to which we can apply Theorems \ref{A2_Thm_Krein} and \ref{Remainder_A2_Thm}. Define
\[
	w_{\beta} (\lambda) \ddd \begin{cases} |\lambda|^{\beta} &\text{when } |\lambda|\leq 1 \\ 1 &\text{when } |\lambda| >1\end{cases} \, ,
		\]
		which is an $A_2 (\R)$ weight when $|\beta| < 1$. Then finite products $w(\lambda) \ddd \prod\limits_{j=1}^n w_{\beta_j} (\lambda -\lambda_j)$, where $|\beta_j| < 1$ and $\lambda_1, \ldots, \lambda_n$ are all distinct, satisfy the conditions of Theorems \ref{A2_Thm_Krein} and \ref{Remainder_A2_Thm}.

In practice, condition \eqref{Szego_fn_Hardy} is difficult to check, thus making it difficult to specify weights with infinitely many singularities to which Theorem \ref{Remainder_A2_Thm} applies. However, if we require the singularities be well spaced out, one can generate other weights that satisfy the assumptions of Theorem \ref{A2_Thm_Krein}. Consider for instance 
\[
	w(\lambda) \ddd \, \prod\limits_{j=0} ^{\infty} w_{\beta_j} \left ( \frac{\lambda -2^j}{\epsilon_j} \right ) ^{} \, ,
	\]
	where $\sup\limits_j |\beta_j| < 1$, and $\{\epsilon_j\}_{j \geq 0}$ is a sequence of positive reals such that $\sum\limits_{j} \epsilon_j  < \infty$.

	As far as methods are concerned, the proofs of Theorems \ref{A2_Thm_Krein} and \ref{Remainder_A2_Thm} are based on \cite{AlexisAD1}'s proof of a positive answer to the OPUC Steklov problem for $A_2 (\T)$ weights: this involves the theory of weighted operators, some basic spectral theory and complex interpolation. Meanwhile Theorem \ref{Thm_remainder_Linfinity} is based on the original, elegantly simply method of Nazarov used to prove \cite[Theorem 2.1]{denik1}, which involves just basic knowledge of the Hilbert transform.

	\subsection{Lingering questions: some potentially open problems}\label{future}

	Regarding lingering questions, let us first comment on our methods. Essential to our proof of Theorem \ref{A2_Thm_Krein} is the orthogonality
\[
  P(r, \lambda; w) \perp_{w} \mathrm{Range } \, \P_{[0,r]} \, ,
\] where $\P_{[0,r]}$ is Fourier projection onto the frequency band $[0,r]$. But using for instance Lemma \ref{orthogonal_inner_prod}, we end up showing	\[
		P(r, \lambda; w) \perp_{w q(\lambda)} \mathrm{Range } \, \P_{[0,r]} \, ,
	      \] where $q(\lambda)$ is any nonnegative polynomial. Thus, as far as the algebra in the proof of Theorem \ref{A2_Thm_Krein} is concerned, one is tempted to consider the case $\widetilde{w} \ddd w q(\lambda)\in A_2 (\R)$ and prove another version of this result. But issues arise: for Krein systems and their solutions to be well-defined, the weight $w$ needs to be ``centered'' near $1$, with e.g. $w-1 \in L^1 (\R) + L^2 (\R)$. This directly conflicts with $\widetilde{w} \in A_2 (\R)$. Indeed, heuristically $\widetilde{w} \in A_2 (\R)$ means $\widetilde{w}$ is more or less constant, which would then mean $w$ decays to $0$, which would contradict the required decay of $w-1$. A possible remedy here is to study the more general de Branges canonical systems, wherein the weight $w$ can deviate from $1$ in more exotic ways. I have not investigated this potential framing of the problem and thus so far it remains an open question.

	      While Proposition \ref{minimum_p_prop} shows that the requirement that $p \geq p_2$ is sharp in Theorem \ref{A2_Thm_Krein}, it is unclear to me whether the requirement that $p \leq q$ is sharp in the statement of Theorem \ref{Remainder_A2_Thm}. Furthermore, while Proposition \ref{minimum_p_prop} had a nice interpretation --- that $P(r, \lambda ;w) - \e{i \lambda r}$ cannot decay faster that $w-1$ --- the requirement that $p \leq q$ is more difficult to interpret.  in elucidating this part of Theorem \ref{Remainder_A2_Thm}.

		Finally, if $w \in A_2 (\R)$ and $w-1 \in L^1 (\R) + L^2 (\R)$, then Theorem \ref{A2_Thm_Krein} implies
\begin{equation}\label{P_bd_intermediate_end}
	\sup\limits_{r > 0} \|P(r, \lambda)-\e{i \lambda r}\|_{L^p _w (\R)} < \infty \,
\end{equation}
	for some $p > 2$. While this in turn implies
\begin{equation}\label{Poisson_estimate}
	\sup\limits_{r > 0} \|P(r, \lambda)\|_{L^p _{w(\lambda) \frac{ d \lambda}{1+\lambda^2}} (\R)} < \infty
\end{equation}
for some $p > 2$, I feel the estimate \eqref{Poisson_estimate} is more natural to try and prove directly. I wonder if there are other methods or assumptions which would more readily yield \eqref{Poisson_estimate} without going through \eqref{P_bd_intermediate_end}.

	\subsection{Organization of this Paper}

This paper is organized as follows. First, we outline the basic theory of Krein systems in Section \ref{Krein_system_Basics}. In Section \ref{Decay_P}, we discuss a priori which $L^p _w (\R)$ spaces $P(r, \lambda ; w) - \e{i \lambda r}$ belongs to. In Section \ref{Krein_ortho_lemma_section} we prove a useful orthogonality lemma. We spend Sections \ref{weight_A2_section},  \ref{remainder_A2_section} and \ref{Section_Pf_remainder_Linfinity} proving the main results of this paper, i.e.\ Theorem \ref{A2_Thm_Krein} and Proposition \ref{minimum_p_prop}, Theorem \ref{Remainder_A2_Thm}, and Theorem \ref{Thm_remainder_Linfinity}. In Sections \ref{tech_lemma_pf_section} and \ref{section_I-Qwp_invertible} we prove the technical Lemmas \ref{lemma_mu_weight_Lp} and \ref{thm_inverse_bd_final_basic} essential in the proofs of the results in this paper. And finally, in Appendix \ref{complex_analysis_section}, we prove Proposition \ref{solve_for_X_Steklov_fullp_statement}.

\subsection{Notation}

If $B$ is a Banach space, we denote its dual by $B^*$, and the space of bounded operators on $B$ by $\cal{L} (B)$.

If $X \subset \R^d$ and $d \mu = w(x) \, dx$, we define $L^p _w (X) \ddd L^p _{\mu} (X)$. If $\mu$ is Lebesgue measure, we omit $\mu$, i.e.\ $L^p (X) \ddd L^p _{dx} (X)$. If $T$ is a bounded linear operator between two Banach spaces $L^p _{\mu} (X)$, $L^q _{\nu} (Y)$, we write its norm as $\|T\|_{p,q}$.

If $p\in [1,\infty]$, the dual exponent is denoted by $p'=p/(p-1)$.

For $f \in L^1 _{\mu} (X)$, denote the average of $f$ over the measure space $(\mu, X)$ by
\[
	\langle f\rangle_{X, \mu} \ddd \frac{1}{\mu(X)}\int_X fd\mu\,.
\]
		If $X \subset \R^d$ and $\mu$ is Lebesgue measure, we will simply write $\langle f\rangle_{X}$.

In this paper we work with spatial variable $\lambda \in \R$ and frequency variable $r \in \R^+$. Thus for Banach spaces like $L^p _{\sigma} (\R)$ or $L^p _w (\R)$, the variable of integration is understood to be $\lambda$, i.e.\ $\|f(r, \lambda)\|_{L^p _w (\R)} \ddd \|f(r, \lambda)\|_{L^p _{w(\lambda) d \lambda} }$. And given a measure, weight or function over $\R$, we write them with respect to variable $\lambda$, i.e.\ $w = w(\lambda)$.

Given a measure space $(\mu, X)$, we denote the $L^2_{\mu} (X)$ inner product by
		\[
			\langle f, g \rangle_{(\mu, X)} \ddd \int\limits_X f(x) \ov{g(x)} \, d\mu (x) \, .
			\]
			If it is clear from context that $X = \R^d$, we will simply write $\langle f, g \rangle_{\mu}$.

			If $\e{x} \ddd e^x$, then the Fourier transform $\hat{f}$, or $\cal{F}f$, of a function $f$ and its inverse transform $\check{f}$, or $\cal{F}^{-1} f$ are given by
		\[
			\widehat{f} (\xi) \ddd \int\limits_{- \infty} ^{\infty} f(x) \e{- 2 \pi i x \xi} \, dx \, , \quad \widecheck{f} (x) \ddd \int\limits_{-\infty}^{\infty} f(\xi) \e{2 \pi i x \xi} d \xi \, .
					\] 

Given a set $A\subseteq X$ where $X = \R^d$, we will use notation $A^c$ for its complement, i.e., $A^c=X\backslash A$.

Let $C_c^\infty(U)$ denote the space of smooth functions compactly supported in $U$, an open subset of $\R^d$.

For two non-negative functions
$f_{1}$ and $f_2$, we write $f_1\lesssim f_2$ if  there is an absolute
constant $C$ such that
\[
f_1\le Cf_2
\]
for all values of the arguments of $f_1$ and $f_2$. If the constant depends on a parameter $\alpha$, we will write $f_1\lesssim_\alpha f_2$. We define $\gtrsim$
similarly and write $f_1\sim f_2$ if $f_1\lesssim f_2$ and
$f_2\lesssim f_1$ simultaneously.

If a constant $C$ depends on parameter $\alpha$, we will express this by writing $C(\alpha)$ or $C_{\alpha}$.

	\bigskip

\section{Krein Systems Basics}\label{Krein_system_Basics}

In this section we explain the basics of Krein systems following \cite{denisov_krein}, listing most facts needed for this paper without proof; see \cite{denisov_krein} for an accessible in-depth survey on Krein systems.

Suppose $\sigma$ is a Poisson-finite measure over $\R$, i.e.\
\[
	\int\limits_{\R} \frac{d \sigma (\lambda)}{1+\lambda^2} < \infty \, .
	\]

	\begindef A function $H: \R \to \C$ is \textit{Hermitian} if $H(-x) = -\overline{H(x)}$. 

	\begindef A Hermitian function $H \in L^2 _{loc} (\R)$ is the \textit{accelerant} associated to the measure $\sigma$ if there exists a real constant $\beta$ such that for all $x \in \R$,
\begin{equation}\label{accelerant_eqn}
	\int\limits_0 ^x (x-s) H(s) \, ds = i \beta x + \int\limits_{-\infty} ^{\infty} \left (1 +\frac{i \lambda x}{1+\lambda^2} - \e{i \lambda x} \right ) \frac{1}{\lambda^2} \, \left ( d\sigma (\lambda) - \frac{d \lambda}{2 \pi} \right ) \, .
	\end{equation} \smallskip

	Formally differentiating \eqref{accelerant_eqn} twice yields ``$H (2 \pi x) = \widecheck{( d \sigma- \frac{d \lambda}{2 \pi})} (x)$.'' Thus intuitively the accelerant captures the moments of the signed measure $d \sigma - \frac{d \lambda}{2 \pi}$.

If an accelerant $H$ exists, then for each $r > 0$, define the operator $\HH_r$ on $L^2 ([0,r])$ by
\begin{equation}\label{H_operator_def}
	\HH_r f (x) \ddd \int\limits_0 ^r H (x-y) f(y) \, dy \, .
\end{equation}
Of interest is when $I + \HH_r >0$, which occurs for a large class of measures.

\begin{Lem}\label{Gamma_L2}
  Suppose $d\sigma = w \frac{d\lambda}{2 \pi}$. If $w-1 \in L^1 (\R) + L^2 (\R)$, then 
  \[
  H (x) \ddd \frac{1}{2 \pi} (\widecheck{w-1}) \left ( \frac{x}{2 \pi} \right ) = \mathcal{F}^{-1} (w (2 \pi \cdot) -1)(x)
  \]
  is the accelerant associated to $d \sigma$, and $I + \HH_r >0$.
\end{Lem}
\begin{proof}
  Let $a \in L^1 (\R) + L^2 (\R)$ and first define $H \ddd \frac{1}{2 \pi} \widecheck{a} \left ( \frac{x}{2 \pi} \right ) \in C_0 (\R) + L^2 (\R) \subset L^2 _{loc} (\R)$ and
	\begin{equation}\label{def_beta}
		\beta \ddd \frac{1}{2 \pi} \int\limits_{\R} \frac{\lambda}{1+ \lambda^2} a (\lambda) \, d \lambda \, .
	\end{equation} Suppose we can show that
	\begin{equation}\label{accelerant_eqn_intermediate}
		\int\limits_0 ^x (x-s) H(s) \, ds = i \beta x + \frac{1}{2 \pi} \int\limits_{-\infty} ^{\infty} \left (1 +\frac{i \lambda x}{1+\lambda^2} - \e{i \lambda x} \right ) \frac{1}{\lambda^2} a(\lambda) \, d\lambda \,
	\end{equation}
	holds for all $x \in \R$. Then setting $a = w-1$, we will show that $H(x) \ddd \frac{1}{2 \pi} \widecheck{(w-1)} \left ( \frac{x}{2 \pi} \right )$ is the accelerant associated to $d \sigma = \frac{w (\lambda)}{2 \pi} d \lambda$, i.e.\ we will show \eqref{accelerant_eqn_intermediate} holds.

	We now note that $I + \mathcal{H}_r > 0$ : take $f \in L^2 ([0,r])$ and extend $f$ to a function over $\R$ by defining $f(x) = 0$ for $x \notin [0,r]$. Then Fourier inversion for distributions yields
		\begin{align*}
		  \langle (I+ \mathcal{H}_r )f, f \rangle_{(dx, \R)} = \langle f, f \rangle_{(dx, \R)} + \langle H \ast f, f \rangle_{(dx, \R)} &= \langle \hat{f} , \hat{f} \rangle_{(d\lambda, \R)} + \langle (w(2 \pi \cdot)-1) \hat{f}, \hat{f} \rangle_{(d\lambda, \R)} \\
		&= \|\hat{f}\|_{L^2 _{w(2 \pi \cdot)} (\R)} ^2 \\
			&> 0
		\end{align*}
			whenever $f \neq 0$, i.e.\ $I + \mathcal{H}_r > 0$.

			It remains to show \eqref{accelerant_eqn_intermediate} holds for $H, \beta, a$ as above. If we can show \eqref{accelerant_eqn_intermediate} holds for $a \in L^1(\R)$ and for $a \in L^2 (\R)$, then by linearity \eqref{accelerant_eqn_intermediate} holds for all $a \in L^1 (\R) + L^2 (\R)$.

			\textit{The case that $a \in L^1 (\R)$:} First note that both sides of \eqref{accelerant_eqn_intermediate} are well-defined functions for $a \in L^1$. Next note that the second derivative (with respect to variable $x$) of the left side of \eqref{accelerant_eqn_intermediate} equals the second derivative of the right side of \eqref{accelerant_eqn_intermediate}. Indeed, this follows from the dominated convergence theorem and that $a \in L^1 (\R)$ implies $H \in C(\R)$.

	Thus the first derivatives of each side of \eqref{accelerant_eqn_intermediate} will be identical so long as we can verify they agree at a point, say $x=0$. But $\beta$ is defined precisely to make this occur; one can check this using the dominated convergence theorem.

	Thus to verify \eqref{accelerant_eqn_intermediate} holds for all $x$, it suffices to verify each side of \eqref{accelerant_eqn_intermediate} agrees at a point, say $x=0$. Direct computation shows each side of \eqref{accelerant_eqn_intermediate} is $0$ at $x=0$. This completes the proof of this case.

	\textit{The case that $a \in L^2 (\R)$:} First let $a _n \in L^1 (\R)\cap L^2 (\R)$ approximate $a$ in $L^2 (\R)$, e.g.\ define
	\[
		a _n \ddd \chi_{[-n,n]} a \, .
		\]
		By the previous case, if $H_n (x) \ddd \frac{1}{2 \pi} \widecheck{a_n} \left (\frac{x}{2 \pi} \right )$, and $\beta_n = \frac{1}{2 \pi} \int\limits_{\R} \frac{\lambda}{1+\lambda^2} a_n (\lambda) \, d\lambda$, then \eqref{accelerant_eqn_intermediate} holds for $(H_n, \beta_n, a_n)$. We note that $H_n \to H$ in $L^2 (\R)$ by Plancherel's theorem; we also have $\beta_n \to \beta$ for $\beta$ given by \eqref{def_beta}. Now consider \eqref{accelerant_eqn_intermediate} for $(H_n, \beta_n, a_n)$ and take $n \to \infty$ to get it for $(H, \beta, a)$. This completes the proof.
				\end{proof}

				Assume $H \in L^2 _{loc} (\R)$ is an accelerant for which $I + \HH_r > 0$ for each $r > 0$. Then the resolvent operator
\begin{equation}\label{def_G}
	\G_r \ddd I - (I + \HH_r)^{-1}
\end{equation}
	 is an integral operator on $L^2 ([0,r])$ with kernel $\Gamma_r (s,t) \in L^2 _{ds \times dt} ( [0,r]^2)$ possessing the following properties.
\smallskip

	 \textbf{Properties of the Resolvent kernel $\Gamma$}
	\begin{enumerate}[label=(\roman*)]
		\item \textit{Resolvent Identity: } from \eqref{def_G} we have
\begin{align}
	\Gamma_r (t,s) + \int\limits_0 ^r H(t-u) \Gamma_r (u,s) \, du &= H(t-s) \, , \label{resolvent_identity_1} \; 0 \leq s,t \leq r \, . 
\end{align}
\item \textit{Symmetries: }
	$\Gamma$ has following symmetries:
\begin{equation}\label{Gamma_symmetries}
  \Gamma_r (s,t) = \ov{\Gamma_r (t,s)} \, , \quad \Gamma_r(s,t) = \Gamma_r (r-t,r-s) \, , \quad 0 \leq s,t \leq r \, .
\end{equation}
\item \label{regularity_Gamma_property} \textit{Regularity inherited from accelerant:} if $H \in C^k (\R)$ for some $k \geq 0$, then for each $r > 0$, $\Gamma_r (\cdot, \cdot) \in C^k ([0,r]^2)$. Furthermore, $\Gamma_r (s,t)$ is continuously differentiable in $r$ and 
\begin{equation}\label{Gamma_diff}
\partial_r \Gamma_r (s,t) = -\Gamma_r (s,r) \Gamma_r (r,t) \, , \quad 0\leq s,t \leq r\, .
\end{equation}

\item \label{resolvent_cty_property} \textit{Continuity in $L^2$:} Define
	\begin{equation}\label{resolvent_continuity_L2}
		g_r (s) \ddd \begin{cases} \Gamma_r (s,0) &\text{ if } 0 \leq s \leq r \\ 0 &\text{ if } s > r \end{cases} \, .
	\end{equation} 
If $H \in L^2 _{loc} (\R)$, then the mapping $r \mapsto g_r$ is an element of $C([0,R], L^2 ([0,R]))$ for all $R>0$ (see \cite[Chapter 6]{denisov_krein}).
	\end{enumerate}
\smallskip

	Using $\Gamma_r$, one can define the ``continuous polynomials'' $\{P(r, \lambda; \sigma )\}_{r \geq 0}$. These are entire functions $P(r, \cdot \, ; \sigma )$, each of ``degree'' $r$, given by
\begin{align}
	P(r, \lambda;\sigma) &\ddd \e{i \lambda r}  - \int\limits_0 ^r \Gamma_r (r , t) \, \e{i \lambda t} \, dt  = (I + \HH_r)^{-1} (\e{i \lambda \cdot}) (r) \, . \label{def_P}
\end{align}
This definition is motivated by analogy to the OPUC: if one considers the Toeplitz matrix
\[
	\mathbf{T_n} \ddd \begin{pmatrix} c_0 & c_1 & \ldots & c_n \\ c_{-1} & c_0 & \ldots & c_{n-1} \\ \vdots & \vdots & \ddots & \vdots \\ c_{-n} & c_{-n +1} &  \ldots & c_{0}\end{pmatrix} \, ,
	\]
			where $c_j \ddd \int\limits_{\T} z^{-j} \, d \mu$ are the moments of a measure $\mu$ on $\T$, then the orthogonal polynomial $\phi_n (z)$ associated to $\mu$ is given by last row of the column-vector 
	\[
	  \sqrt{\frac{\det \mathbf {T_n}}{\det \mathbf{T_{n-1}}}} \, \,  \mathbf{T_n} ^{-1} \begin{pmatrix} 1 & z & z^2 & \ldots & z^n \end{pmatrix}^\intercal \, .
			\]
			And so similar to the OPUC, $\{P(r, \lambda ; \sigma)\}_{r \geq 0}$ is an orthonormal system with respect to $d \sigma$, i.e.\ \eqref{orthonormal_system} holds for all $f,g \in L^2 (\R^+)$. Furthermore, like how each $z^n$ may be expressed as a sum of the OPUC $\{\phi_k\}_{0 \leq k \leq n}$, each exponential $\e{i \lambda r}$ is a ``continuous sum'' of the polynomials $\{P(s, \lambda ; \sigma \}_{0 \leq s \leq r}$, i.e.\ given $r \in (0, R)$ we have
	\begin{equation}\label{P_basis}
		\e{i \lambda r} = P(r, \lambda ; \sigma) + \int\limits_0 ^r L_R(r,s) P(s, \lambda ; \sigma) \, ds  = (I+\cal{L}_R)P(\cdot, \lambda ; \sigma) (r)\, ,
	\end{equation}
	for some Volterra operator $\cal{L}_R$ bounded on $L^2 ([0,R])$.

      Similar to the role the Verblunsky coefficients $\{\alpha_n\}$ play for OPUC (see e.g.\ \cite[Theorem 1.5.2]{Simonbook}), there exists $A(r) \in L^2 _{loc} (\R^+)$ such that $\begin{pmatrix} P (r, \lambda) & P_* (r, \lambda) \end{pmatrix}^\intercal$ is the solution to the Krein (differential) system
\begin{equation}\label{diff_Krein_system_1}
	\begin{cases}
	P' &= i \lambda P - \ov{A} P_* \, , \quad P(0, \lambda) = 1  \\
	P_* ' &=-AP\, , \quad P_* (0, \lambda) = 1
	\end{cases} \, , \qquad \lambda \in \C \, , \end{equation}
where the derivative is taken with respect to $r$ and $P_* (r, \lambda; \sigma) \ddd \e{i \lambda r} \ov{P(r, \ov{\lambda} ; \sigma)}$.
	If $H$ is continuous, then $A(r)$ is given by the $0^{\text{th}}$ ``coefficient'' of $P(r, \lambda)$, i.e.
\begin{equation}\label{A_def}
	\ov{A(r)} = \Gamma_r (r,0) \, .
\end{equation}

\section{A priori estimates: when does \texorpdfstring{$P(r, \lambda)-\e{i \lambda r} \in L^p (\R)$}{P-exp in Lp}?} \label{Decay_P}

Suppose, just within the scope of this paragraph, that $w$ is very regular, e.g.\ $w,w^{-1} \in L^{\infty} (\R)$. For \eqref{P_uniform_bd_basic} to hold for a fixed $p$, it is then necessary that $P(r, \lambda) - \e{i \lambda r} \in L^p  (\R)$ for each $r>0$ to begin with. But this is not immediate for arbitrary weights. How do properties of $w$ determine which $L^p$ spaces $P(r, \lambda) - \e{i \lambda r}$ belongs to?

\begin{Prop}\label{regularity_P}
  If $w-1 \in L^1 (\R) + L^2 (\R)$, then $P(r, \lambda ; w) - \e{i \lambda r} \in L^p (\R)$ for $2 \leq p \leq \infty$, for each $r> 0$. In particular, $P(r, \lambda ; w) - \e{i \lambda r} \in L^p _w (\R)$ for $2 \leq p < \infty$, for each $r>0$. 
\end{Prop}
Thus a priori, if $w-1 \in L^1 (\R) + L^2 (\R)$ all we can say is that $P (r, \lambda) - \e{i \lambda r} \in L^p _w (\R)$ for $2 \leq p < \infty$.

We spend the rest of this section proving Proposition \ref{regularity_P}.

\begin{Lem} \label{accelerant_P_2_infinity}
If an accelerant $H \in L^2 _{loc} (\R)$ satisfies $I + \HH_r > 0$ for each $r > 0$, where $\HH_r$ is as in \eqref{H_operator_def}, then
	\begin{enumerate}[label=(\alph*)]
		\item $P(r, \lambda) - \e{i \lambda r} \in L^p (\R)$ for all $2 \leq p \leq \infty$. \label{Lemma_initial_P_reg}
		\item for each $R>0$, we have
			\begin{equation} \label{P_loc_Linfinity}
			  \sup\limits_{0 \leq r \leq R} |P(r, \lambda) - \e{i \lambda r}| \leq R^{1/2} \|g_{r}\|_{C([0,R],L^2([0,R]))} < \infty \, ,
\end{equation}
where $g_r$ is as in \eqref{resolvent_continuity_L2}.
			Furthermore, $P(r, \lambda) - \e{i \lambda r}$ is continuous in each variable. \label{Lemma_initial_P_reg_2}
	\end{enumerate}
\end{Lem}
\begin{proof}
	Let us first focus on part \ref{Lemma_initial_P_reg}. By H\"older's inequality it suffices to show $P(r, \lambda) - \e{i \lambda r}$ is an element of $L^p (\R)$ for $p=2, \infty$. By \eqref{Gamma_symmetries} we have
	\begin{equation}\label{def_P_alt}
	  P (r, \lambda) - \e{i \lambda r} = -\e{i \lambda r} \int\limits_0 ^r \Gamma_r (s,0) \e{- i \lambda s} \, ds = -\e{i \lambda r} \int\limits_0 ^r g_r (s) \e{- i \lambda s} \, ds \, ,
	\end{equation}
	where $g_r$ is as in \eqref{resolvent_continuity_L2}.
	Since $g_r $ is an element of $L^2 ([0,r])$, then applying Plancherel to \eqref{def_P_alt} yields $P(r, \lambda) - \e{i \lambda r} \in L^2( \R)$.

	Applying Cauchy-Schwarz to the right side of \eqref{def_P_alt} gives the $L^\infty (\R)$ estimate needed to complete the proof of \ref{Lemma_initial_P_reg}; in fact Cauchy-Schwarz yields the estimate \eqref{P_loc_Linfinity}. The continuity in part \ref{Lemma_initial_P_reg_2} follows from $r \mapsto g_r$ being an element of $C([0,R], L^2 ([0,R]))$ for each $R>0$ (see Property \ref{resolvent_cty_property} of the resolvent kernel).
\end{proof}

\begin{Lem}\label{Gamma_cts} If $\langle \lambda \rangle^k (w-1) \in L^1(\R)$ for $k\geq 0$ an integer, then $w \frac{d \lambda}{2 \pi}$ has accelerant $\frac{1}{2 \pi} (\widecheck{w-1})\left ( \frac{x}{2 \pi} \right ) \in C^k (\R)$, the resolvent kernel $\Gamma$ exists, and $\Gamma_r (\cdot, \cdot) \in C^k ([0,r]^2)$ for each $r > 0$.
\end{Lem}
\begin{proof}
	By Lemma \ref{Gamma_L2}, $H(x) \ddd \frac{1}{2 \pi} (\widecheck{w-1})\left ( \frac{x}{2 \pi} \right )$ is the accelerant, and $I + \HH_r > 0$. The latter conclusion $\Gamma$ exists. Since $\langle \lambda \rangle^k (w-1) \in L^1(\R)$, then $H \in C^k (\R)$ and so $\Gamma_r (\cdot, \cdot) \in C^k ([0,r]^2)$ by Property \ref{regularity_Gamma_property} of the Resolvent kernel $\Gamma$.
\end{proof}

\begin{proof}[Proof of Proposition \ref{regularity_P}]
	Lemma \ref{Gamma_L2} implies that the measure $d \sigma = w(\lambda) \frac{d \lambda}{2 \pi}$ has accelerant $H \in L^2 _{loc} (\R)$. 
		It follows from Lemma \ref{accelerant_P_2_infinity} \ref{Lemma_initial_P_reg} that
	\[
		P(r, \lambda) - \e{i \lambda r} \in L^p (\R) \text{ for } p \in [2, \infty] \, .
		\]

	To get the weighted-norm estimate, write
	\begin{align*}
		w |P(r, \lambda) - \e{i \lambda r}|^p &= (w-1) |P(r, \lambda) - \e{i \lambda r}|^p + |P(r, \lambda) - \e{i \lambda r}|^p \\
		&= u_1 |P(r, \lambda) - \e{i \lambda r}|^p + u_2 |P(r, \lambda) - \e{i \lambda r}|^p + |P(r, \lambda) - \e{i \lambda r}|^p \, ,
	\end{align*}
	where $w-1 = u_1 + u_2$, with $u_1 \in L^1 (\R)$ and $u_2 \in L^2 (\R)$. We are left with checking the sum is integrable.

	We just showed $|P(r, \lambda) - \e{i \lambda r}|^p$ is integrable for any $p \in [2, \infty]$. Then $u_1 |P(r, \lambda) - \e{i \lambda r}|^p$ is integrable since $P(r, \lambda) - \e{i \lambda r} \in L^{\infty} (\R)$ and $u_1  \in L^1 (\R)$. To see that $u_2 |P(r, \lambda) - \e{i \lambda r}|^p$ is integrable, apply the Cauchy-Schwarz inequality, and use the fact that $u_2  \in L^2 (\R)$ and $|P(r, \lambda) - \e{i \lambda r}| \in L^{2 p} (\R)$. 
\end{proof}

\section{Krein system solutions are orthogonal to lower Fourier frequencies}\label{Krein_ortho_lemma_section}
\begindef Let $\proj{a}{b}{}$ denote the Fourier projection onto the frequency band $[a,b]$, i.e.\
\[
	\proj{a}{b}{} \left ( \int\limits_{\R} f(\xi) \e{i (\cdot) \xi} \, d \xi \right ) (\lambda) = \int\limits_{a}^b f(\xi) \e{i \lambda \xi} \, d \xi
	\]
	for all $f \in L^2 (\R)$. Note $\proj{a}{b}{} = \mathcal{F}^{-1} \chi_{[\frac{a}{2 \pi}, \frac{b}{2 \pi}]} \mathcal{F}$. \smallskip

\beginrmk	Note that $\proj{0}{r}{} g :L^1 (\R) \to L^2 (\R)$. Indeed, by Plancherel's theorem, it suffices to show 
	\[
	  \chi_{[0, \frac{b}{2 \pi}]} \mathcal{F}: L^1 (\R) \to L^2 (\R),
\]
which follows from the fact that 
\[
  \chi_{[0, \frac{b}{2 \pi}]}: L^{\infty} (\R) \to L^2 (\R) \, , \quad \mathcal{F}: L^1 (\R) \to L^{\infty} (\R) \, ,
\]
where the last boundedness property follows from the Riemann-Lebesgue lemma. \smallskip

	Solutions to the Krein system satisfy the following orthogonality Lemma. 
\begin{Lem}\label{orthogonal_inner_prod}
  If $w-1 \in L^1 (\R) + L^2 (\R)$, then for each nonnegative integer $k$, we have
	\begin{equation}\label{orthogonal_inner_prod_eqn}
	  \langle P(r, \lambda; w), \lambda^k \int\limits_0 ^{r} f(s) \e{i \lambda s} \, ds \rangle_{ w(\lambda) d \lambda}= 	\langle P(r, \lambda; w) w , \lambda^k \int\limits_0 ^{r} f(s) \e{i \lambda s} \, ds \rangle_{d \lambda}=0 \,
	\end{equation}
	for all $f \in C_c ^{\infty} ((0,r))$.

	Furthermore, we also have
\begin{align}
	\langle \e{i \lambda r}, \lambda^k \int\limits_0 ^{r} f(s) \e{i \lambda s} \, ds \rangle_{d \lambda}=0 \label{orthogonal_inner_prod_vanishing} \, ,\\
	\langle 1, \lambda^k \int\limits_0 ^{r} f(s) \e{i \lambda s} \, ds \rangle_{d \lambda} = 0 \label{orthogonal_inner_prod_vanishing_2} .
	\end{align}
\end{Lem}

\beginrmk The main statement of interest in this lemma is \eqref{orthogonal_inner_prod_eqn}. Ignoring issues of integrability, if $k=0$, then this is the intuitive statement that \[
	P(r, \lambda) \perp_w \, \mathrm{Range} \, \P_{[0,r]} \, ,
	\]
	whose analogue for OPUC was used in \cite{denik2, AlexisAD1} in addressing the Steklov problem for OPUC.

For $k \geq 1$, this gives us the more surprising statement 
\[
	P(r, \lambda) \perp_{\lambda^k w} \, \mathrm{Range} \, \P_{[0,r]} \, , 
	\]
	possessing no analogue for OPUC.\smallskip

\begin{proof}
	We focus first on \eqref{orthogonal_inner_prod_eqn}. Use integration by parts to write
	\[
		\langle P(r, \lambda) w , \lambda^k \int\limits_0 ^{r} f(s) \e{i \lambda s} \, ds \rangle_{d \lambda} = i^k \langle P(r, \lambda) w , \int\limits_0 ^{r} f^{(k)}(s) \e{i \lambda s} \, ds \rangle_{d \lambda} \, .
		\]

		Since $w-1 \in L^1 (\R)+ L^2 (\R)$, then it follows from Lemma \ref{Gamma_L2} and then Lemma \ref{accelerant_P_2_infinity} that the accelerant $H$ associated to $w$ is in $L^2 _{loc} (\R)$ and $P$ is continuous in both variables. This last property allows us to write $P(r, \lambda)$ as the limit of its averages in $r$ and so we may write the inner-product as
	\[
		i^k \langle \left ( \lim\limits_{\epsilon \to 0} \frac{1}{\epsilon} \int\limits_{r}^{r + \epsilon} P(s, \lambda) \, ds \, \right )  w , \int\limits_0 ^{r} f^{(k)}(s) \e{i \lambda s} \, ds \rangle_{d \lambda} \, .
		\]
		Since $f^{(k)}(s) \in C_c ^{\infty}(0,r)$ and $w-1 \in L^1 (\R) + L^2 (\R)$, then $w(\lambda) \int\limits_0 ^{r} f^{(k)}(s) \e{i \lambda s} \, ds \in L^1 (\R)$. Apply the dominated convergence theorem, using e.g.\ \eqref{P_loc_Linfinity} as justification (which holds thanks to Lemma \ref{Gamma_L2}), to pull $\lim\limits_{\epsilon \to 0}$ outside the inner-product, yielding
	\begin{equation}\label{inner_prod_intermediate_2}
		i^k \lim\limits_{\epsilon \to 0} \langle \frac{1}{\epsilon} \int\limits_{r}^{r + \epsilon} P(s, \lambda) \, ds \,  , \int\limits_0 ^{r} f^{(k)} (s) \e{i \lambda s} \, ds \rangle_{w (\lambda) d\lambda} \, .
	\end{equation}

	Write
	\[
	  \int\limits_0 ^{r} f^{(k)} (s) \e{i \lambda s} \, ds = \langle \e{i \lambda s} , \ov{f^{(k)} (s)} \rangle_{(ds, [0,r])} \, ,
		\]
		which, by the change of basis formula \eqref{P_basis}, equals
	\[
	\langle (I+\cal{L}_r)P(\cdot, \lambda ; \sigma) (s) , \ov{f^{(k)} (s)} \rangle_{(ds, [0,r])} =\langle P(s, \lambda ; \sigma) , (I+\cal{L}_r ^*)(\ov{f^{(k)}}) (s) \rangle_{(ds, [0,r])} = \int\limits_0 ^r P(s, \lambda) g(s) \, ds \, ,
		\]
		where $g(s) \ddd \ov{(I+\cal{L}_r ^*)(\ov{f^{(k)}}) (s)} \in L^2 ([0,r])$.
Thus we can rewrite \eqref{inner_prod_intermediate_2} as
\begin{equation*}
	i^k \lim\limits_{\epsilon \to 0} \langle \frac{1}{\epsilon} \int\limits_{r}^{r + \epsilon} P(s, \lambda) \, ds \,  , \int\limits_0 ^r P(s, \lambda) g(s) \, ds \rangle_{w(\lambda) d \lambda } \, .
	\end{equation*}
By the orthogonality of Krein system solutions \eqref{orthonormal_system} this equals $0$, thereby completing the proof of \eqref{orthogonal_inner_prod_eqn}.

By applying \eqref{orthogonal_inner_prod_eqn} with $w=1$ and using $P(r, \lambda;1) = \e{i \lambda r}$, we get \eqref{orthogonal_inner_prod_vanishing}.

	As for \eqref{orthogonal_inner_prod_vanishing_2}, use integration by parts to write
	\[
		\langle 1, \lambda^k \int\limits_0 ^{r} f(s) \e{i \lambda s} \, ds \rangle_{d \lambda} = i^k \langle 1, \int\limits_0 ^{r} f^{(k)}(s) \e{i \lambda s} \, ds \rangle_{d \lambda} \, .
		\]
Use Fourier inversion to write the inner-product as
	\[
		\langle 1, \int\limits_0 ^{r} f^{(k)}(s) \e{i \lambda s} \, ds \rangle_{d \lambda} = \ov{\int\limits_{\R} \int\limits_0 ^{r} f^{(k)}(s) \e{i \lambda s} \, ds \, d\lambda}  = \ov{f^{(k)} (0)} = 0 \, ,
\]
where the last equality follows from $\supp f \subseteq (0,r)$.
\end{proof}

The above lemma will often be paired with the one below.
\begin{lemma}\label{check_proj_0_lemma}
	Let $r > 0$. Then given $g \in L^1 (\R) + L^2 (\R)$, we have $\proj{0}{r}{} g \in L^2 (\R)$, which equals $0$ if and only if
		\[
			\langle g (\lambda) , \int\limits_0 ^r f(s) \e{i \lambda s} \, ds \rangle_{d \lambda} = 0\,
			\]
		for all $f \in C^{\infty} _c (0,r)$.
\end{lemma}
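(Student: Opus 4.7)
The plan is to interpret the pairing in the lemma as testing $\proj{0}{r}{} g$ against a dense family of functions in the range of $\proj{0}{r}{}$, and then conclude via self-adjointness of the Fourier projection. First, split $g = g_1 + g_2$ with $g_1 \in L^1 (\R)$ and $g_2 \in L^2 (\R)$. The remark preceding the lemma gives $\proj{0}{r}{} g_1 \in L^2 (\R)$, and Plancherel's theorem gives $\proj{0}{r}{} g_2 \in L^2 (\R)$, hence $\proj{0}{r}{} g \in L^2(\R)$.

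Next, for $f \in C_c^\infty (0,r)$, set $h(\lambda) \ddd \int_0^r f(s) \e{i \lambda s} \, ds$. A change of variable shows $h = \mathcal{F}^{-1} \tilde{f}$, where $\tilde{f}(\xi) \ddd 2\pi f(2\pi \xi)$ belongs to $C_c ^\infty (0, r/(2\pi))$; in particular $h \in L^2 (\R) \cap L^\infty (\R)$ and $h = \proj{0}{r}{} h$. Since $\proj{0}{r}{}$ is the Fourier multiplier with real-valued symbol $\chi_{[0, r/(2\pi)]}$, it is self-adjoint on $L^2 (\R)$, so for $g_2 \in L^2 (\R)$
\[
\langle g_2, h \rangle_{d\lambda} = \langle g_2, \proj{0}{r}{} h \rangle_{d\lambda} = \langle \proj{0}{r}{} g_2 , h \rangle_{d\lambda} \, .
\]
For the $L^1$ piece, approximate $g_1$ in $L^1 (\R)$ by functions $g_1^{(n)} \in L^1 (\R) \cap L^2 (\R)$ (e.g.\ by truncation); the identity above holds for each $g_1^{(n)}$. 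Since $h \in L^\infty (\R)$, the left-hand sides converge to $\langle g_1 , h \rangle_{d \lambda}$, while boundedness $\proj{0}{r}{}: L^1 (\R) \to L^2 (\R)$ gives convergence of the right-hand sides to $\langle \proj{0}{r}{} g_1 , h \rangle_{d \lambda}$. Summing the two pieces yields
\[
\langle g , h \rangle_{d\lambda} = \langle \proj{0}{r}{} g , h \rangle_{d\lambda} \qquad \text{for every } f \in C_c^\infty (0,r) \, .
\]

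Finally, as $f$ ranges over $C_c^\infty (0,r)$, the functions $\tilde{f}$ are dense in $L^2 ([0, r/(2\pi)])$, so by Plancherel the functions $h = \mathcal{F}^{-1} \tilde{f}$ are dense in $\mathrm{Range} \, \proj{0}{r}{} \subset L^2 (\R)$. Therefore $\langle \proj{0}{r}{} g , h \rangle_{d \lambda} = 0$ for every admissible $f$ is equivalent to $\proj{0}{r}{} g$ being orthogonal in $L^2 (\R)$ to its own range, which, since $\proj{0}{r}{} g \in \mathrm{Range} \, \proj{0}{r}{}$, forces $\proj{0}{r}{} g = 0$. The only step requiring genuine care is the passage from $L^2$ to $L^1 + L^2$: one must make sure that the truncation argument simultaneously preserves both the $L^1$-convergence used for the left-hand side and the $L^2$-convergence of the projections used for the right-hand side, which is why the $L^1 \to L^2$ boundedness of $\proj{0}{r}{}$ from the preceding remark is essential.
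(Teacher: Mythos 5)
Your proof is correct and is essentially the paper's argument viewed from the spatial side: the paper passes to the Fourier domain, where $\proj{0}{r}{} g = 0$ is equivalent to $\chi_{[0, r/(2\pi)]} \mathcal{F} g$ pairing to zero against the dense family of test functions, and then applies Parseval, while you obtain the same identity via self-adjointness of $\proj{0}{r}{}$ and density of the functions $h = \mathcal{F}^{-1} \tilde{f}$ in its range. Your explicit truncation argument for the $L^1$ piece supplies the justification that the paper leaves implicit in the phrase ``taking Fourier inverses of both entries in the inner product.''
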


\begin{proof}
  Let $g = g_1 + g_2$ with $g_1 \in L^1 (\R) , \, g_2 \in L^2 (\R)$. By the remark at the beginning of the section and Plancherel's theorem, we have $\proj{0}{r}{} g \in L^2 (\R)$. 

  As for the if and only if, note $\proj{0}{r}{} g = 0$ if and only if $\chi_{[0,r]} \cal{F} g \in L^{2} (\R)$ if and only if for all $f \in C_{c} ^{\infty} (0,r)$, we have
	\[
\langle \cal{F} g (s) , f(s) \rangle _{ds} = 0 \, ,
		\]
which, by taking Fourier inverses of both entries in the inner product, holds if and only if 
	\[
			\langle g (\lambda) , \int\limits_0 ^r f(s) \e{i \lambda s} \, ds \rangle_{d \lambda} = 0\, .
			\]
\end{proof}

In the sections that follow, we will consider linear operators $T$ which satisfy 
	\begin{equation} \label{sd_01}
	  \|T\|_{L^p _w (\R^d), L^p _w (\R^d)} = \|w^{1/p} T w^{-1/p}\|_{p,p} \leq \cal{F} ([w]_{A_p (\R)}, p) \, ,
	\end{equation}
	where the function $\cal{F} (t, p)$ on $[1, \infty) \times (1, \infty)$ is continuous in $t$ for every fixed $p \in (1, \infty)$. In what follows, we do not need to know $\cal{F}$ explicitly. However,  $\cal{F}$ is known in many applications. For example, the Hunt-Muckenhoupt-Wheeden theorem \cite[p.205]{stein} shows that $T$ can be taken as a singular integral operator and recent breakthrough on domination of singular integrals by sparse operators provides the sharp dependence of $\cal{F}$ on $[w]_{A_p}$. In particular, for a large class of singular integral operators, one can take
$
\cal{F}(t,p)=C(p)t^{\max(1,(p-1)^{-1})},
$
(see, e.g., \cite[p.264]{NazLer}).

\begin{lemma}\label{proj_calF_ind_r}
The Fourier projections $\proj{0}{r}{}$ satisfy \eqref{sd_01} for some $\cal{F}$ independent of $r$.
\end{lemma}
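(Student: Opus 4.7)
The plan is to reduce the boundedness of $\proj{0}{r}{}$ on $L^p_w(\R)$ to the boundedness of the Hilbert transform. The key observation is that the characteristic function of the frequency band $[0,r]$ can be decomposed as
\[
\chi_{[0,r]}(\xi) = \tfrac{1}{2}\bigl(\sgn(\xi) - \sgn(\xi - r)\bigr),
\]
so $\proj{0}{r}{}$ is, up to a constant, the difference of two Fourier multipliers whose symbols are translates of $\sgn$. Since $\sgn$ is (up to a constant multiple of $i$) the symbol of the Hilbert transform $H$ in the Fourier convention used in the paper, we get a representation of the form
\[
\proj{0}{r}{} = c\bigl(H - M_r\, H\, M_{-r}\bigr),
\]
where $M_a$ denotes modulation $M_a f(\lambda) \ddd \e{i a \lambda} f(\lambda)$ (or $\e{2\pi i a \lambda}$, depending on the Fourier normalization). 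First I would carefully write this identity out, checking the Fourier convention used in the paper so that the modulation translates the symbol by exactly $r$.

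Next I would invoke the Hunt--Muckenhoupt--Wheeden theorem as cited in the excerpt: for any $p \in (1,\infty)$ and $w \in A_p(\R)$, the Hilbert transform satisfies
\[
\|H\|_{L^p_w(\R) \to L^p_w(\R)} \leq \cal{F}([w]_{A_p(\R)}, p)
\]
for some function $\cal{F}$ continuous in its first argument (one can take $\cal{F}(t,p) = C(p) t^{\max(1,(p-1)^{-1})}$ by the sparse-domination bounds mentioned above). This is precisely a bound of the type \eqref{sd_01} with no $r$-dependence.

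The final step is to observe that modulations $M_a$ are isometries on $L^p_w(\R)$ for \emph{every} weight $w$, because $|M_a f(\lambda)| = |f(\lambda)|$ pointwise; in particular, $M_a$ preserves $L^p_w$-norms exactly, and $M_a M_{-a} = I$. Consequently
\[
\|M_r\, H\, M_{-r}\|_{L^p_w \to L^p_w} = \|H\|_{L^p_w \to L^p_w},
\]
and the triangle inequality applied to the decomposition above yields
\[
\|\proj{0}{r}{}\|_{L^p_w \to L^p_w} \leq 2|c|\, \cal{F}([w]_{A_p(\R)}, p),
\]
which is independent of $r$.

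There is essentially no substantive obstacle here; the only thing to be careful about is bookkeeping with Fourier conventions (the placement of the $2\pi$ in $\chi_{[0,r/(2\pi)]}\cal{F}$ versus in the exponentials $\e{i\lambda s}$), so that the translate in the symbol is by exactly $r$ (or $r/(2\pi)$) and the modulation is by the corresponding physical-side factor. Once this is set up correctly, the argument is immediate from Hunt--Muckenhoupt--Wheeden plus the trivial invariance of weighted $L^p$ norms under unimodular pointwise multiplication.
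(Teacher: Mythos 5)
Your proposal is correct and is essentially the paper's own argument: the paper likewise writes $\proj{0}{r}{}$ as a linear combination of the Hilbert transform and a modulated conjugate $\e{i \lambda \frac{r}{2\pi}} \cal{H} \e{-i \lambda \frac{r}{2\pi}}$ (its equation \eqref{projection_as_hilbert_transforms}), then applies the weighted Hilbert transform bound and the triangle inequality, with the unimodular modulations leaving the weighted norms untouched. Your extra care about the Fourier normalization (the $r/(2\pi)$ versus $r$ bookkeeping) is exactly the one detail the paper glosses over with ``one can check this by looking at the Fourier multipliers.''
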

\begin{proof}
  By e.g.\ \cite[p.264]{NazLer}, \eqref{sd_01} is satisfied by the Hilbert transform $\cal{H}$, which has Fourier multiplier $-i \, \mathrm{sign} (\xi)$ \cite[p.26]{stein}. Now note each $\proj{0}{r}{}$ is a linear combination of modulated Hilbert transforms, i.e. 
\begin{equation}\label{projection_as_hilbert_transforms}
  \proj{0}{r}{} = i \left ( \frac{\cal{H} - \e{i \lambda \frac{r}{2 \pi}} \cal{H} \e{- i \lambda \frac{r}{2 \pi} }}{2} \right ) \, .
      \end{equation}
	One can check this by e.g.\ looking at the Fourier multipliers of all the operators involved. The triangle inequality then yields \eqref{sd_01} for $T= \proj{0}{r}{}$ and function $\cal{F}$ independent of $r$.
      \end{proof}

      \section{The Steklov problem for an \texorpdfstring{$A_2 (\R)$}{A2} weight: proof of Theorem \ref{A2_Thm_Krein} and Proposition \ref{minimum_p_prop}}\label{weight_A2_section}
      In this section we prove Theorem \ref{A2_Thm_Krein}, and demonstrate its sharpness by Proposition \ref{minimum_p_prop}. We need the following result, which follows from e.g.\ \cite[Theorem 1, Corollary to Theorem 1]{Vasyunin}.

\begin{Lem}[Reverse H\"older inequality, open inclusion of $A_p$ weights]\label{Reverse_Holder_A2}
Suppose $[w]_{A_p (\R)} \leq \gamma$, $p \in (1,\infty)$. Then there exists $q (\gamma, p) > 1$, satisfying $\lim\limits_{\gamma \to 1}  q (\gamma, p) = + \infty$, such that for all $t \in [0,q]$, we have
\[
		\langle w^t \rangle_I \lesssim_{\gamma} \langle w \rangle_I ^t \, 
		\]
		for all intervals $I$, and $[w^t]_{A_p (\R)} \lesssim_{\gamma,p} 1 $. 
	
	Furthermore, there exists $s (\gamma, p) \in (1, p)$ such that for all $t \in [s, \infty)$, we have
	$[w]_{A_{t} (\R)} \leq \eta (\gamma, p)$, where $s$ and $\eta$ satisfy $\lim\limits_{\gamma \to 1} s (\gamma, p) = 1$ and $\lim\limits_{\gamma \to 1} \eta (\gamma, p) = 1$.
      \end{Lem}

Assume $[w]_{A_2 (\R)} \leq \gamma$ and define $\widehat{p}_{\gamma} ' = s (\gamma,2)$, where $s$ is as in Lemma \ref{Reverse_Holder_A2}. Then $\lim\limits_{\gamma \to 1} \widehat{p}_{\gamma} = \infty$, and
\begin{equation*}
[w]_{A_{{\widehat{p}_{\gamma}} '} (\R)} \leq \eta (\gamma,2) \, , \quad \widehat{p}_{\gamma} > 2\, . 
\end{equation*}
Define \begin{equation}
\label{low_Muckenhoupt_2}
\epsilon(\gamma) \ddd \frac{1}{p_{\gamma} '} - \frac{1}{2} \, .
\end{equation}
Thus, if $p \in [\widehat{p}_{\gamma} ', \widehat{p}_{\gamma}] = \{p ~:~ |\frac{1}{p} - \frac{1}{2}| \leq \epsilon (\gamma)\}$, then 
\[
  [w]_{A_{p} (\R)} , \,  [w]_{A_{p'} (\R)} \leq \eta (\gamma,2) \, ,
\] which in particular implies that for all such $p$, we have 
\begin{equation}\label{init_bds_Ap_characteristic}
  [w]_{A_p (\R)}, [w^{-p/p'}]_{A_p (\R)} \lesssim_{\gamma} 1 \, .
\end{equation}
Note $\epsilon (\gamma) \in (0, \frac{1}{2})$, with $\lim\limits_{\gamma \to 1} \epsilon (\gamma) = \frac{1}{2}$.

If we additionally assume $w-1 \in L^1 (\R) + L^2 (\R)$, then Proposition \ref{regularity_P} implies $P(r, \lambda;w) - \e{i \lambda r} \in L^2 (\R) \cap L^{\infty} (\R)$. Together, these estimates yield
\begin{multline}\label{Steklov_wP-e_in_L1L2}
	w P(r, \lambda; w) - \e{i \lambda r} = (w-1)( P(r, \lambda; w) - \e{i \lambda r})  \\
	 + (w-1) \e{i \lambda r} + (P(r, \lambda; w) - \e{i \lambda r}) \in L^1 (\R) + L^2 (\R) \, .
\end{multline}

Proposition \ref{regularity_P} and the fact that $w-1 \in L^1 (\R) + L^2 (\R)$ also imply 
\begin{equation}\label{X_Lp_Krein_A2_v2}
	X_p \ddd  w^{1/p} \left ( P(r, \lambda;w) - \e{i \lambda r} \right ) \in L^p(\R) \, , \quad 2 \leq p < \infty \, .
\end{equation}
 Since 
\[
  \|P(r, \lambda;w) - \e{i \lambda r}\|_{L^p _w (\R)} = \|X_p\|_{L^p (\R)} \, ,
	\]
	it suffices to estimate $X_p$ in $L^p (\R)$, which we'll do using functional analysis methods. Note \eqref{X_Lp_Krein_A2_v2} means we can do functional analysis on $X_p$ in the space $L^p (\R)$ for any $p \in [2, \infty)$ when $w-1 \in L^1 (\R) + L^2 (\R)$. 

	\begin{Lem}\label{functional_lemma_A2} If $[w]_{A_2 (\R)} \leq \gamma$ and $w-1 \in L^1 (\R) + L^2 (\R)$, then there exists $\epsilon (\gamma) \in (0,\frac{1}{2})$, with $\lim\limits_{\gamma \to 1} \epsilon (\gamma) = \frac{1}{2}$, such that for all $p \in [2, \infty)$ satisfying $|\frac{1}{p} - \frac{1}{2}| < \epsilon (\gamma)$, we have
\begin{align}\label{monic_muckenhoupt_2} 
	X_p &= w^{1/p} \P_{[0,r]} w^{-1/p} X_p \, , \\
\label{orthogonal_muckenhoupt}
		0&=w^{-1/p'} \P_{[0,r]} w^{1/p'} X_p + w^{-1/p'} \P_{[0,r]} w^{1/p'} (w^{1/p} - w^{-1/p'}) \e{i \lambda r} \,
\end{align}
in $L^p (\R)$. In particular, for all such values of $p$ we have
	\begin{equation}\label{functional_eqn_A2}
		(I-Q_{w,p}) X_p = - w^{-1/p'} \P_{[0,r]} w^{1/p'} (w^{1/p} - w^{-1/p'}) \e{i \lambda r} \, ,
	\end{equation}
where
	\begin{equation*}
		Q_{w, p} \ddd w^{1/p} \P_{[0,r]} w^{-1/p} - w^{-1/p'} \P_{[0,r]} w^{1/p'} \, .
	\end{equation*}

\end{Lem}

\beginrmk One might wonder if $(w^{1/p} - w^{-1/p'}) \, \e{i \lambda r} \in L^p (\R)$; it is by taking $q=0$, and both $\widetilde{p}$ and $p$ equal in \eqref{integrability_Ap_weights_most_general_statement} of the following lemma. \smallskip
\begin{Lem}[Integrability of $A_p$ weights] \label{lemma_mu_weight_Lp}
	Let $d\mu (\lambda) =\langle \lambda  \rangle^{q} d \lambda$ for some $q \geq 0$. Suppose that
	\begin{itemize}
		\item $[w]_{A_{p'} (\R)} \leq \gamma$ for some $p' \in (1, \infty)$.
		\item $w-1 = u_1 + u_2$ with $u_1 \in L^{p_1} _{\mu} (\R), u_2 \in L^{p_2} _{\mu} (\R)$.
		\item $1 \leq p_1  \leq p_2 \leq p$.
	      \end{itemize} Then there exists $\epsilon (\gamma, p) > 0$, with $\lim\limits_{\gamma \to 1} \epsilon (\gamma,p) = \min\{1/p,1/p'\}$, such that for all $\widetilde{p}$ satisfying $|\frac{1}{p} - \frac{1}{\widetilde{p}}| < \epsilon (\gamma, p)$, we have
			\begin{equation}\label{integrability_Ap_weights_most_general_statement}
			  \|w^{1/\widetilde{p}} - w^{-1/\widetilde{p}'}\|_{L^p _{\mu} (\R)} < \infty \, . 
			\end{equation}
				
				If $q=0$ and $\widetilde{p} = p$, then there exists $\tau_0 (p)$ a small constant such that whenever 
				\[
				  \tau \ddd [w]_{A_{\infty} (\R)} - 1 \leq \tau_0(p) \, ,
				\] we have the perturbative estimate
		\begin{equation}\label{perturbative_est_weight}
			\|w^{1/p} - w^{-1/p'}\|_{L^p (\R)} \lesssim_{p} \tau ^{(p-p_2)/(2p)} (\tau^{\frac{p_2 - p_1}{2}} \|u_1\|_{L^{p_1} (\R) } ^{p_1}+ \|u_2\|_{L^{p_2} (\R)} ^{p_2})^{1/p}\, .
		\end{equation}
\end{Lem}

We defer the proof of Lemma \ref{lemma_mu_weight_Lp} till Section \ref{tech_lemma_pf_section}.

\begin{proof}[Proof of Lemma {\ref{functional_lemma_A2}}]
  Take $\epsilon (\gamma)$ as in \eqref{low_Muckenhoupt_2}. Then note all relevant quantities are well-defined as elements of, or operators on, $L^p (\R)$: $X_p \in L^p (\R)$ by \eqref{X_Lp_Krein_A2_v2}, $w^{-1/p'} \P_{[0,r]} w^{1/p'}$ and $w^{-1/p'} \P_{[0,r]} w^{1/p'}$ are bounded on $L^p (\R)$ by the Hunt-Muckenhoupt-Wheeden theorem and the fact that $w, w^{-p/p'} \in A_p (\R)$. And as per the previous remark, $(w^{1/p} - w^{-1/p'}) \e{i \lambda r} \in L^p (\R)$ by Lemma \ref{lemma_mu_weight_Lp}.

	We note \eqref{monic_muckenhoupt_2} is equivalent to 	\[
		P(r, \lambda) - \e{i \lambda r} = \proj{0}{r}{} (P(r, \lambda) - \e{i \lambda r}) \, .
	      \] Since $P(r, \lambda) - \e{i \lambda r} = - \int\limits_0 ^r \Gamma_r (r, t) \, \e{i \lambda t } \, dt$ with $\Gamma_r (r, \cdot) \in L^2 ([0,r])$ as in the discussion in Section \ref{Krein_system_Basics}, then this clearly holds.

	Meanwhile \eqref{orthogonal_muckenhoupt} is equivalent to
	\begin{equation*} 
			\P_{[0,r]} (w P(r, \lambda) - \e{i \lambda r}) =0 \, .
		\end{equation*}

	From \eqref{Steklov_wP-e_in_L1L2} it follows that $w P(r, \lambda) - \e{i \lambda r} \in L^1 (\R) + L^2 (\R)$. Thus by Lemma \ref{check_proj_0_lemma}, it suffices to show
	\[
		\langle w P(r, \lambda) - \e{i \lambda r}, \int\limits_0 ^r f(s) \e{i \lambda s} \, ds \, \rangle_{d \lambda } = 0
		\] for each $f \in C^{\infty} _c (0,r)$. This follows from Lemma \ref{orthogonal_inner_prod}.

To get \eqref{functional_eqn_A2}, subtract \eqref{orthogonal_muckenhoupt} from \eqref{monic_muckenhoupt_2} and rearrange.
\end{proof}

As it turns out, we can invert $I-Q_{w,p}$ for $p$ sufficiently close to $2$.
\begin{Lem} \label{thm_inverse_bd_final_basic} For $w \geq 0$, consider the formal operator
\begin{equation*}
  Q_{w, p} \ddd  w^{1/p} \proj{0}{r}{} w^{-1/p} - w^{-1/p'} \proj{0}{r}{} w^{1/p'} \, ,
\end{equation*}
If $[w]_{A_2 (\R)} \leq \gamma$, then there exists $\epsilon (\gamma) \in (0,\frac{1}{2})$, independent of $r$, with $\lim\limits_{\gamma \to 1} \epsilon (\gamma) = \frac{1}{2}$, such that for all $p$ satisfying $|\frac{1}{p} - \frac{1}{2}| < \epsilon (\gamma)$, $I-Q_{w,p}$ has bounded inverse on $L^{p} (\R)$ with operator bound
	\begin{equation}\label{inverse_bd_final_basic}
	  \|(I-Q_{w, p})^{-1}\|_{p,p} \lesssim 1 \, .
	\end{equation}
\end{Lem}

Let us briefly discuss the strategy for proving Theorem \ref{A2_Thm_Krein}. Using Lemma \ref{thm_inverse_bd_final_basic} and \eqref{functional_eqn_A2}, we have
\begin{equation}\label{X_solved_pgeq2}
  X_p = - (I- Q_{w,p})^{-1} w^{-1/p'} \P_{[0,r]} w^{1/p'} (w^{1/p} - w^{-1/p'}) \e{i \lambda r} \, \quad \text{ in } L^p (\R) \,  
\end{equation}
for all $p\geq 2$ sufficiently close to $2$. Then, we can estimate $\|X_p\|_{L^p (\R)}$ by estimating $\|(I-Q_{w,p})^{-1}\|_{p,p}$, $\|w^{-1/p'} \P_{[0,r]} w^{1/p'}\|_{p,p}$, and $\|(w^{1/p} - w^{-1/p'}) \e{i \lambda r}\|_{p}$. However, such a process will only give us a bound for $\|X_p\|_p$ when $p\geq 2$ since our starting point, \eqref{functional_eqn_A2}, was only valid for $p \geq 2$. We address this is by noting that all elements involving $p$ in \eqref{X_solved_pgeq2} are actually analytic in variable $\frac{1}{p}$, and the right-side of \eqref{X_solved_pgeq2} is well-defined for $p<2$. Hence equality must hold for $p < 2$. In particular, we have the following Proposition. 

\begin{Prop}\label{solve_for_X_Steklov_fullp_statement}
  Suppose $w-1 \in L^{1} (\R) + L^{2} (\R)$. If $[w]_{A_2 (\R)} \leq \gamma$, then there exists $\epsilon (\gamma) \in (0, \frac{1}{2})$, with $\lim\limits_{\gamma \to 1} \epsilon (\gamma) = \frac{1}{2}$, such that
	\begin{equation}\label{solve_for_X_Steklov_fullp}
		X_p = - (I-Q_{w,p})^{-1} w^{-1/p'} \P_{[0,r]} w^{1/p'} (w^{1/p} - w^{-1/p'}) \e{i \lambda r} \, ,
	\end{equation}
	for all $p$ satisfying $\left |\frac{1}{p} - \frac{1}{2} \right| < \epsilon (\gamma)$.
\end{Prop}

We prove Lemma \ref{thm_inverse_bd_final_basic} and Proposition \ref{solve_for_X_Steklov_fullp_statement} in Section \ref{section_I-Qwp_invertible} and Appendix \ref{complex_analysis_section} respectively.

	\begin{proof}[Proof of Theorem \ref{A2_Thm_Krein}]
By Proposition \ref{solve_for_X_Steklov_fullp_statement}, we may estimate
\[
	\|X_p\|_{L^p (\R)} \leq \|(I - Q_{w, p})^{-1}\|_{p,p} \|w^{-1/p'} \P_{[0,r]} w^{1/p'} \|_{p,p} \| w^{1/p} - w^{-1/p'}\|_{L^p (\R)} \, 
		  \]
		  for all $p$ satisfying $|\frac{1}{p} - \frac{1}{2}| < \epsilon (\gamma)$, where $\lim\limits_{\gamma \to 1} \epsilon (\gamma) = \frac{1}{2}$.

Since $\proj{0}{r}{}$ satisfies \eqref{sd_01} for some $\cal{F}$ independent of $r$, we may in fact write
	\[
		\|X_p\|_{L^p (\R)} \leq \|(I - Q_{w, p})^{-1}\|_{p,p} \cal{F} ([w^{-p/p'}]_{A_p (\R)}, p) \| w^{1/p} - w^{-1/p'}\|_{L^p (\R)} \, .
		  \]
		
		  Since $[w^{-p/p'}]_{A_p (\R)} \lesssim_{\gamma} 1$ by \eqref{init_bds_Ap_characteristic}, then $\cal{F} ([w^{-p/p'}]_{A_p (\R)}, p) \lesssim_{\gamma,p} 1 $. And by applying Lemma \ref{thm_inverse_bd_final_basic}, we get $\|(I-Q_{w,p})^{-1}\|_{p,p} \lesssim 1$ for $\left |\frac{1}{p} - \frac{1}{2} \right | < \epsilon (\gamma)$. Thus
		\[
			\|X_p\|_{L^p (\R)} \lesssim_{p, \gamma} \| w^{1/p} - w^{-1/p'}\|_{L^p (\R)}
			\]
		for all $p$ satisfying $|\frac{1}{p} -\frac{1}{2}| < \epsilon(\gamma)$. Note all of these estimates are uniform in $r$.

Part \ref{general_Krein_A2} now follows from noting $\| w^{1/p} - w^{-1/p'}\|_{L^p (\R)} < \infty$ by Lemma \ref{lemma_mu_weight_Lp} so long as $p \in [p_2, \infty)$.

		Meanwhile part \ref{perturbative_Krein_A2} follows by fixing $p$, taking $\tau_0 (p)$ small enough so that
		\begin{itemize}
			\item $|\frac{1}{p} - \frac{1}{2}| < \epsilon (\gamma)$.
			\item $[w]_{A_{\infty} (\R)} -1 \leq [w]_{A_2 (\R)} -1 = \tau$ is sufficiently small that \eqref{perturbative_est_weight} applies.
		\end{itemize}
		Then \eqref{perturbative_est_weight} implies \eqref{perturbative_estimate_A2}.
\end{proof}

We now turn our attention towards Proposition \ref{minimum_p_prop}. Let us first discuss its meaning: suppose $w-1 \in L^1 (\R) + L^{p_2} (\R)$ for $p_2 \in [1,2]$ and $w-1 \notin L^1 (\R)$; we think of $p_2$ as measuring the decay of $w-1$, with smaller $p_2$'s indicating better decay. Recall Theorem \ref{A2_Thm_Krein} \ref{general_Krein_A2}, which says that if $[w]_{A_2 (\R)} \leq \gamma$, then
\[
  \|P(r, \lambda; w) - \e{i \lambda r}\|_{L^p _w (\R)} < \infty  
\]
whenever $p \in [p_2, \infty)$ satisfies $|\frac{1}{p} - \frac{1}{2}| < \epsilon (\gamma)$, where $\lim\limits_{\gamma \to 1} \epsilon (\gamma) = \frac{1}{2}$.
So let $\gamma- 1$ be sufficiently small that $|\frac{1}{p_2} - \frac{1}{2}| < \epsilon (\gamma)$. Then the requirement that $p \geq p_2$ in Theorem \ref{A2_Thm_Krein} \ref{general_Krein_A2} is in part saying that $P(r, \lambda ;w ) - \e{i \lambda r}$ cannot hope to decay faster than $w-1$ decays. This seemingly makes sense a priori: since the accelerant satisfies $H(x) = \frac{1}{2 \pi}\widecheck{(w-1)} \left ( \frac{x}{2 \pi} \right )$, then whatever decay $w-1$ possesses gets ``converted'' into the regularity of $H$. But then the resolvent kernel $\Gamma_r (s,t)$, as a rule of thumb, is at most as regular as $H$. Since by \eqref{def_P} we have 
\[
  P(r, \lambda) - \e{i \lambda r} = - \int\limits_0 ^r \Gamma_r (r,t) \e{i \lambda t} \, dt \, ,
\]
i.e.\ $P(r, \lambda) - \e{i \lambda r}$ is essentially the inverse Fourier transform of $-\Gamma_r (r,\cdot)$ and so whatever regularity $\Gamma_r (r, \cdot)$ possesses gets ``converted'' into the decay of $P(r, \lambda) - \e{i \lambda r}$. Thus heuristically, we expect the decay of $P(r, \lambda) - \e{i \lambda r}$ to not exceed that of $w-1$, as the former ``inherits'' its decay from the latter.

\begin{proof}[Proof of Proposition \ref{minimum_p_prop}]
  By H\"older's inequality, we can assume without loss of generality that $p \in (1, p_2)$.

  Without loss of generality, assume $\delta \leq \frac{1}{2}$. Let $u$ be an even, real-valued function on $\R$ such that $0 \leq u \leq 1$ and $u \in L^q (\R)$ if and only if $q \geq p_2$. Then define $w = 1+\delta u$ so that $[w]_{A_2(\R)} \leq 1 + \delta$ and $w-1 \in L^{p_2} (\R)$. 

  Without loss of generality, assume $\delta$ sufficiently small that $|\frac{1}{p} - \frac{1}{2}| < \epsilon (1+\delta)$, where $\epsilon$ is as in Lemma \ref{thm_inverse_bd_final_basic}. Then by Lemma \ref{thm_inverse_bd_final_basic}, $\|(I-Q_{w,p})^{-1}\|_{p,p} \lesssim 1$. Thus by Proposition \ref{solve_for_X_Steklov_fullp_statement}, we have 
\[
  X_p = -(I-Q_{w,p})^{-1} w^{-1/p'} \proj{0}{r}{} w^{1/p'} (w^{1/p} - w^{-1/p'}) \e{i \lambda r} \, , 
\]
or rather
\[
  (I-Q_{w,p}) X_p = - \delta \, \e{i \lambda r} w^{-1/p'} \proj{-r}{0}{} u \, . 
\]
Estimating $L^p (\R)$ norms yields
\[
  \|I-Q_{w,p}\|_{p,p} \|X_p\|_{L^p (\R)} \gtrsim_p \|\proj{-r}{0}{} u\|_{L^p (\R)} \, .
\]
Since $w \sim 1$, then $[w]_{A_p (\R)}, [w^{-p/p'}]_{A_p (\R)} \sim_{p} 1$ and so by Lemma \ref{proj_calF_ind_r}, we have $\|I - Q_{w,p}\|_{p,p} \lesssim 1$ and so
\[
  \|X_p\|_{L^p (\R)} \gtrsim_p \|\proj{-r}{0}{} u\|_{L^p (\R)} \, .
\]
This then means
\[
  \sup\limits_{r \geq 0} \|X_p\|_{L^p (\R)} \gtrsim_p \sup\limits_{r \geq 0} \|\proj{-r}{0}{}u\|_{L^p (\R)} \, .
\]
It suffices to show
\[
  \sup\limits_{r \geq 0} \|\proj{-r}{0}{}u\|_{L^p (\R)} = \infty \, .
\]
Since $u \in L^2 (\R)$, write $u(\lambda) = \int\limits_{-\infty} ^{\infty} v(s) \e{i \lambda s} \, ds$, where $v \in L^2 (\R)$. Since $u$ is real-valued and even, then so is $v$. In particular this means $\proj{-r}{0}{}u = \ov{\proj{0}{r}{} u}$, and combined with the fact that $u$ is real-valued, we get $\sup\limits_{r \geq 0} \|\proj{-r}{0}{}u\|_{L^p (\R)} = \infty$ if $\sup\limits_{r \geq 0} \|\proj{-r}{r}{}u\|_{L^p (\R)} = \infty$.
 
In fact, $\{\proj{-n}{n}{} u\}_{n \geq 0}$ is unbounded in $L^p (\R)$. Indeed, suppose to the contrary the sequence is bounded. Then there exists some increasing sequence of integers $n_k$ and some $\widetilde{u} \in L^p (\R)$ such that $\proj{-n_k}{n_k}{} u$ converges to $\widetilde{u}$ weakly in $L^p (\R)$. In particular, for every Schwarz function $f$, we have
\[
  \langle \widetilde{u}, f \rangle = \lim\limits_{k \to \infty} \langle \proj{-n_k}{n_k}{} u, f \rangle = \langle u , f \rangle \, , 
\]
where the last equality follows from Plancherel's theorem. Thus $u = \widetilde{u} \in L^p (\R)$, which contradicts our requirement that $u \in L^q (\R)$ if and only if $q \geq p_2$. This completes the proof.
\end{proof}

\section{A mixed norm remainder estimate}\label{remainder_A2_section}
In this section, we prove Theorem \ref{Remainder_A2_Thm}.

\begindef If $\Gamma_r (\cdot, \cdot) \in C^k ([0,r]^2)$, integrate \eqref{def_P} by parts $k$ times to yield
\begin{equation*}
	P(r, \lambda) - \e{i \lambda r} = \sum\limits_{l=1}^{k} \frac{a_{l, r} (\lambda)}{\lambda^l} + \frac{R_{k, r} (\lambda)}{\lambda^k} \, ,
\end{equation*}
where
\begin{equation}\label{def_a}
	a_{l, r} (\lambda) \ddd  \begin{cases} (i)^l \left ( \e{i\lambda r} (\partial_t)^{l-1} \Gamma_r (r,t) \Big|_{t=r} - (\partial_t)^{l-1} \Gamma_r (r,t)\Big|_{t=0} \right ) \quad &l \geq 1\\ 0 &l =0\end{cases}
\end{equation}
and the \textit{remainder term} $R_{k, r} (\lambda)$ is defined by
	\begin{equation}\label{Remainder_integral}
		R_{k,r} (\lambda) \ddd -(i)^k \int\limits_{0}^r \left ( (\partial_t)^k \Gamma_r (r,t) \right ) \e{i \lambda t} \, dt \, .
	\end{equation}\smallskip

We can also express the remainder in terms of the solution $P(r, \lambda)$ to the Krein system and the ``coefficients'' $a_{l, r} (\lambda)$, i.e.\
\begin{equation*}
	R_{k,r} (\lambda) = \lambda ^k (P(r, \lambda) - \e{i \lambda r}) - \sum\limits_{l=1}^{k} \lambda^{k-l} a_{l, r} (\lambda) \, .
\end{equation*}
Note that $R_{0,r} = P(r, \lambda ; w) - \e{i \lambda r}$.

The Steklov problem can be reformulated in terms of mixed norms: it asks when does one have the bound
\begin{equation}\label{P_Steklov_mixed_norm}
  \|P(r, \lambda)\|_{L^{\infty} _{d r} (\R^+; L^p _{w} (\Delta))} < \infty
\end{equation}
	for every compact $\Delta \subset \R$? In Theorem \ref{A2_Thm_Krein}, we estimated
\[
  \|R_{0,r}\|_{L^{\infty} _{dr} (\R^+; L^p _{w} (\R))}
	\]
	for some $p$ close to $2$, which implied \eqref{P_Steklov_mixed_norm}. However, it also makes sense to consider other mixed norms, as we do in Theorem \ref{Remainder_A2_Thm}. To estimate $R_{1, r}$ in $L^p _w (\R)$, it suffices to estimate $Y_p \ddd w^{1/p} R_{1, r} (\lambda)$ in $L^p (\R)$.

We begin with two lemmas.

\begin{Lem}\label{A_L2}
  Suppose $w\in A_2 (\R)$ and $w-1 \in L^1 (\R)$. If Condition \eqref{Szego_fn_Hardy} holds, then $A \in L^2 (\R^+)$, where $A(r)$ is as given by \eqref{A_def}.
\end{Lem}
\begin{proof}
	By \cite[Theorem 12.14]{denisov_krein}, $A(r) \in L^2 (\R^+)$ follows if both Condition \eqref{Szego_fn_Hardy} holds and $\log w \in L^1 (\R)$. Thus it suffices to check our assumptions imply $\log w \in L^1 (\R)$.

	Note that when $x \geq \frac{1}{2}$, then $|\log (x)| \lesssim |x-1|$. Whence
	\[
		\int\limits_{\{w \geq \frac{1}{2}\}} |\log w | \lesssim \int\limits_{\{w \geq \frac{1}{2}\}} |w-1| \leq \|w-1\|_{L^1 (\R)} < \infty \, .
		\]
		Since $\int\limits_{\R} |w-1| d \lambda < \infty$, then $\{ w \leq \frac{1}{2}\}$ has finite Lebesgue measure. By taking $\widetilde{p},p'=2$ and $q=0$ in Lemma \ref{lemma_mu_weight_Lp}, we get $w^{1/2} - w^{-1/2} \in L^2 (\R)$ and in particular $w^{1/2} -w^{-1/2}$ is square-integrable on $\{w \leq \frac{1}{2}\}$. Meanwhile $w$ is integrable on $\{w \leq \frac{1}{2}\}$ since $w-1 \in L^1 (\R)$. Thus the $L^2 (\R)$-triangle inequality implies $w^{-1}$ is integrable on $\{w \leq \frac{1}{2}\}$: indeed, $\int\limits_{\{w \leq \frac{1}{2}\} } w^{-1}$ equals   
		\[
	\int\limits_{\{w \leq \frac{1}{2}\} } |w^{-\frac{1}{2}}|^2 \lesssim \int\limits_{\{w \leq \frac{1}{2}\} } |w^{\frac{1}{2}} - w^{-\frac{1}{2}}|^2 + \int\limits_{\{w \leq \frac{1}{2}\} } |w^{\frac{1}{2}}|^2 = \int\limits_{\{w \leq \frac{1}{2}\} } |w^{\frac{1}{2}} - w^{-\frac{1}{2}}|^2 + \int\limits_{\{w \leq \frac{1}{2}\} } w-1 + \int\limits_{\{w \leq \frac{1}{2}\} } 1 < \infty \, .		\] And hence $\log w$ must be integrable on $\{ w \leq \frac{1}{2} \}$ as well.
\end{proof}

\begin{Lem}\label{a_L2} Suppose $w \in A_2 (\R)$, $\langle \lambda \rangle (w-1) \in L^1 (\R)$ and Condition \eqref{Szego_fn_Hardy} holds. Then
\begin{equation}\label{decomposition_a1r}
	a_{1, r} (\lambda) = \e{i \lambda r} \alpha_{\infty} (r) + \alpha_{2}(r) \, ,
	\end{equation}
	where $\alpha_2 \in L^2 (\R^+)$, $\alpha_{\infty} \in L^{\infty} (\R^+)$ and $\lim\limits_{r \to \infty} \alpha_{\infty} (r)$ exists. In particular, 
	\[
	  |a_{1,r} (\lambda)| \leq |\alpha_{\infty} (r)| + |\alpha_2 (r)| \in L^{\infty} _{dr} (\R^+) + L^2 _{dr} (\R^+)
		\]
		for all $\lambda \in \R$.
\end{Lem}
\begin{proof}
	Since $\langle \lambda \rangle (w-1) \in L^1 (\R)$, then by Lemma \ref{Gamma_cts} it follows that $\Gamma_r (\cdot, \cdot) \in C^1([0,r]^2)$. Hence $a_{1, r}$ is well-defined and is given by
 \[
	a_{1, r} (\lambda) = i [\e{i \lambda r} \Gamma_r(r,r) - \Gamma_r (r,0) ] = i [\e{i \lambda r} \Gamma_{r} (0,0) - \ov{A(r)}] \, ,
	\]
	where we used \eqref{Gamma_symmetries} and \eqref{A_def} in the last equality.

Then use \eqref{Gamma_diff} and the fundamental theorem of Calculus to write
	\[
		\Gamma_r (0,0) = \Gamma_{0} (0,0) + \int\limits_0 ^r \partial_x \Gamma_x (0,0) \, dx =\Gamma_{0} (0,0)  -\int\limits_0 ^r \Gamma_x (0,x) \Gamma_x (x,0) \, dx \, .
		\] Use \eqref{A_def} and the resolvent symmetries \eqref{Gamma_symmetries} again, to get
 \[
	 a_{1, r} (\lambda) = -i \left ( -\e{i \lambda r} \Gamma_0 (0,0) + \e{i\lambda r} \int\limits_0 ^r |A(s)|^2 \, ds + \ov{A(r)} \right ) \, .
	 \]

Define
\[
\alpha_{\infty} (r) \ddd -i \left ( -\Gamma_0 (0,0) + \int\limits_0 ^r |A(s)|^2 \, ds \right) \, , \quad \alpha_2 (r) \ddd -i \ov{A(r)} \, ,
\]
 so that \eqref{decomposition_a1r} holds. Since $A(r) \in L^2 (\R^+)$ by Lemma \ref{A_L2}, then $\alpha_2 (r) \in L^2 (\R^+)$ and
 \[
 |\alpha_{\infty} (r)| \leq \left | \Gamma_0 (0,0) \right| + \int\limits_0 ^{\infty}|A(s)|^2 \, ds  < \infty \, ,  \quad \lim\limits_{r \to \infty} \alpha_{\infty} (r) = -i \left ( -\Gamma_0 (0,0) + \int\limits_0 ^{\infty}|A(s)|^2 \, ds \right) \, .
 \]
\end{proof}

Now we can do functional analysis like in the proof of Theorem \ref{A2_Thm_Krein}.

\begin{Lem}Suppose  $\langle \lambda \rangle ^q (w-1) \in L^1 (\R)$ for some $q > 2$. If $[w]_{A_2 (\R)} \leq \gamma$, then there exists $\epsilon (\gamma) \in (0, \frac{1}{2})$, with $\lim\limits_{\gamma \to 1} \epsilon (\gamma) = \frac{1}{2}$, such that for all $ p \in (1, q]$ satisfying $|\frac{1}{p} - \frac{1}{2}| < \epsilon (\gamma)$, we have
	\begin{align}
		\label{exponential_system_remainder_muckenhoupt} Y_p &=w^{1/p} \P_{[0,r]} w^{-1/p} Y_p \, ,\\
		\label{orthogonality_remainder_muckenhoupt}	w^{-1/p'} \P_{[0,r]} w^{1/p'} Y_p &=  -w^{-1/p'} \P_{[0,r]}w^{1/p'} ( w^{1/p} - w^{-1/p'}) \left ( \lambda \e{i \lambda r} + a_{1, r} (\lambda) \right )
	      \end{align} in $L^p (\R)$. In particular, for all such values of $p$, we have
	      \begin{equation}\label{eqn:remainder_functional_eqn_1}
		(I-Q_{w, p}) Y_p = -w^{-1/p'} \P_{[0,r]}w^{1/p'} ( w^{1/p} - w^{-1/p'}) \left ( \lambda \e{i \lambda r} + a_{1, r} (\lambda) \right ) \, ,
	      \end{equation}
		where $Q_{w,p} \ddd w^{1/p} \proj{0}{r}{} w^{-1/p} - w^{-1/p'} \proj{0}{r}{} w^{1/p'}$.
\end{Lem}

\beginrmk Since $\langle \lambda \rangle ^2 (w-1) \in L^1 (\R)$, then $\Gamma_r (\cdot, \cdot) \in C^2([0,r]^2)$ by Lemma \ref{Gamma_cts}. Integrating \eqref{Remainder_integral} by parts once yields $R_{1, r} (\lambda) \in L^p (\R)$ for $1 < p \leq \infty$, and so $Y_p \in L^p (\R)$ for $1 < p \leq \infty$.

We also remark that the right-side of \eqref{orthogonality_remainder_muckenhoupt} is well-defined, i.e.\ 
\[
	( w^{1/p} - w^{-1/p'}) (\lambda \e{i \lambda r} +a_{1, r} (\lambda)) \in L^p (\R)
	\] for $1<p\leq q$, for each $r > 0$.  
	Indeed this follows from Lemma \ref{lemma_mu_weight_Lp}, \eqref{decomposition_a1r}, and that $\langle \lambda \rangle ^q (w-1) \in L^1 (\R)$. We require $q > 2$ just so that we may consider $p>$, which we will do later. \smallskip
\begin{proof}
  Take $\epsilon(\gamma)$ as in \eqref{low_Muckenhoupt_2}, which in particular implies \eqref{init_bds_Ap_characteristic} holds for all $p$ of concern, i.e. for all $p$ such that $\left |\frac{1}{p} - \frac{1}{2} \right | < \epsilon$. Then note all relevant quantities are well-defined as elements of, or operators on, $L^p (\R)$: $w^{-1/p'} \P_{[0,r]} w^{1/p'}$ and $w^{-1/p'} \P_{[0,r]} w^{1/p'}$ are bounded operators on $L^p (\R)$ by the Hunt-Muckenhoupt-Wheeden theorem and that $w, w^{-p/p'} \in A_p (\R)$. And as per the remark above, $Y_p \in L^p (\R)$, and $( w^{1/p} - w^{-1/p'}) (\lambda \e{i \lambda r} +a_{1, r} (\lambda)) \in L^p (\R)$ by Lemma \ref{lemma_mu_weight_Lp}.

  Notice \eqref{Remainder_integral} implies \eqref{exponential_system_remainder_muckenhoupt} is equivalent to
	\[
		R_{1, r} = \proj{0}{r}{} R_{1, r} \, .
		\] Since $R_{1, r} = -i \int\limits_0 ^r \partial_t \Gamma_r (r,t) \, \e{i \lambda t} \, dt$ with $\partial_t \Gamma_r (r, \cdot) \in C([0,r])$, it clearly holds.

	Concerning \eqref{orthogonality_remainder_muckenhoupt}, it is equivalent to
	\[
		\P_{[0,r]} [w(R_{1, r} (\lambda) +\lambda \e{i \lambda r} +  a_{1, r}(\lambda)) -(\lambda \e{i \lambda r} +  a_{1, r} (\lambda))] = 0 \, .
		\]

	We note that $\proj{0}{r}{}$ is acting on \[
		w(R_{1, r} (\lambda) +\lambda \e{i \lambda r} +  a_{1, r}(\lambda)) -(\lambda \e{i \lambda r} +  a_{1, r} (\lambda)) \in L^1 (\R) + L^2 (\R) \, ;
		\]
		to see this, rewrite it as
		\[
			(w-1)R_{1, r} + R_{1, r} + (w-1) (\lambda \e{i \lambda r} +  a_{1, r}) \, .
			\]
			By the previous remark, $R_{1,r} \in L^2 (\R)$ and also $R_{1,r} \in L^{\infty} (\R)$. If we combine these two estimates with the assumption that $\langle \lambda \rangle (w-1) \in L^1 (\R)$, then we get $(w-1)R_{1, r} \in L^1 (\R)$. Finally, combine assumption $\langle \lambda \rangle (w-1) \in L^1 (\R)$ with the estimate $a_{1,r} \in L^{\infty} (\R)$, which follows from \eqref{def_a}, to get $(w-1) (\lambda \e{i \lambda r} +  a_{1, r}) \in L^1 (\R)$.

			Thus by Lemma \ref{check_proj_0_lemma}, it suffices to show
		\[
			\langle w(R_{1, r} + a_{1,r} + \lambda \e{i \lambda r}) - (\lambda \e{i \lambda r} + a_{1,r}) , \int\limits_0 ^r f(s) \e{i \lambda s} \, ds \rangle_{d \lambda} = 0 \, ,
			\]
			or equivalently
\[
	\langle w \lambda P(r, \lambda)- (\lambda \e{i \lambda r} + a_{1,r}) , \int\limits_0 ^r f(s) \e{i \lambda s} \, ds \rangle_{d \lambda} = 0
			\]
		for each $f \in C_{c}^{\infty} (0,r)$. This now follows from Lemma \ref{orthogonal_inner_prod} and the identity
		\[
		a_{1, r} (\lambda) = i (\e{i \lambda r} \Gamma_r (r,r) - \Gamma_r (r,0)) \, ,
		\] as given by \eqref{def_a}.

		Add \eqref{orthogonality_remainder_muckenhoupt} and \eqref{exponential_system_remainder_muckenhoupt} and rearrange to obtain \eqref{eqn:remainder_functional_eqn_1}.
	\end{proof}

\begin{proof}[Proof of Theorem \ref{Remainder_A2_Thm}]
  It suffices to estimate $\|Y_p\|_{L^{\infty} _{dr} (\R; L^p (\R)) + L^2 _{dr} (\R; L^p (\R))}$; we first estimate $\|Y_p\|_{L^p (\R)}$ in terms of $r$.

	From \eqref{eqn:remainder_functional_eqn_1} we have
	\begin{align*}
	\|Y_p\|_{L^p (\R)} \leq \|(I - Q_{w,p})^{-1}\|_{p,p} \, \cdot \, \| w^{-1/p'} \P_{[0,r]}w^{1/p'}\|_{p,p} \, \cdot \,\|( w^{1/p} - w^{-1/p'}) ( \lambda \e{i \lambda r} + a_{1, r} (\lambda) )\|_{L^p (\R)}  \, .
		\end{align*}
		By Lemma \ref{thm_inverse_bd_final_basic}, we have $\|(I-Q_{w, p})^{-1}\|_{p,p} \lesssim 1$ whenever $|\frac{1}{p} - \frac{1}{2}| < \epsilon(\gamma)$, where $\lim\limits_{\gamma \to 1} \epsilon (\gamma) = \frac{1}{2}$. Since $\proj{0}{r}{}$ satisfies \eqref{sd_01} for some $\cal{F}$ independent of $r$, we can also estimate 
		\[
		  \| w^{-1/p'} \P_{[0,r]}w^{1/p'}\|_{p,p} \leq \cal{F}([w^{-p/p'}]_{A_p (\R)}, p) \lesssim_{\gamma, p} 1 \, .
		\]
		If additionally $p \in (1,q]$, then 
		\[
			\|( w^{1/p} - w^{-1/p'})( \lambda \e{i \lambda r} + a_{1, r} (\lambda))\|_{L^p _{d \lambda} (\R)}\lesssim \|w^{1/p} - w^{-1/p'}\|_{L^p _{\langle \lambda \rangle^q d \lambda} (\R)} (1 +  |\alpha_2 (r)| + |\alpha_{\infty} (r)|)
			\]
			where $\alpha_2 , \alpha_{\infty}$ arise from Lemma \ref{a_L2}. By Lemma \ref{lemma_mu_weight_Lp} with $\widetilde{p} = p$, we have 
			\[
			  \|w^{1/p} - w^{-1/p'}\|_{L^p _{\langle \lambda \rangle^q d \lambda} (\R)} \lesssim_{p,w,q} 1 \, .
			\] All together, we get
		\[
		\|Y_p\|_{L^p _w (\R)} \lesssim_{p,w, q} (1 +  \alpha_2 (r) + \alpha_{\infty} (r)) \, .			\]
		Since $1 +  \alpha_2 (r) + \alpha_{\infty} (r) \in L^{\infty} _{dr} (\R^+)+ L^2 _{dr} (\R^+)$ by Lemma \ref{a_L2}, then 
		\[
		\|Y_p\|_{L^{\infty} _{dr} (\R^+; L^p _w (\R)) + L^2 _{dr}(\R^+ ; L^p _w (\R) )} < \infty \, . \]
\end{proof}

\section{Higher order remainder estimates: proof of Theorem \ref{Thm_remainder_Linfinity}}\label{Section_Pf_remainder_Linfinity}
We will spend the rest of this subsection proving Theorem \ref{Thm_remainder_Linfinity}. Suppose 
\[
  (w-1)\langle \lambda \rangle^k \in L^1 (\R) \, .
\]
Then $\Gamma \in C^k (\R^+)$ by Lemma \ref{Gamma_cts}, and so $R_{k,r}$ is well-defined. The following Lemma shows $\{a_{l, r}\}$ are uniformly bounded in $(\lambda,r)$.

\begin{Lem} \label{Remainder_coeffs_bded} Suppose $L \ddd \|(w-1)\langle \lambda \rangle^k\|_{L^1 (\R)} < \infty$. If $\|\langle \lambda \rangle^k (w-1)\|_{L^{\infty} (\R)} \leq \delta < 1$, then
	\[
		\|\partial_t ^j \Gamma_r (r, t)\|_{L^{\infty} ((0,r))} \lesssim \frac{L}{1-\delta} \, , \, \, j=0, \ldots, k \, . 
		\]
		In particular $|a_{l, r} (\lambda)| \lesssim \frac{L}{1-\delta}$.
\end{Lem}
\begin{proof}
  The bound on $|a_{l, r} (\lambda)|$ follows from its definition in \eqref{def_a} and the bound on $\|\partial_t ^j \Gamma_r (r, t)\|_{L^{\infty} ((0,r))}$. To get this latter bound, by the resolvent symmetries \eqref{Gamma_symmetries} it suffices to estimate $\Gamma_r (u,0)$ and all its derivatives in $[0,r]$. The resolvent identity \eqref{resolvent_identity_1} is equivalent to 
  \[
    (I + \HH_r)\Gamma_r (\cdot, s) = H(\cdot-s) \, , \quad s \in [0,r]
  \]
  and thus
	\[
		\Gamma_r (u,0) = (I + \HH_r)^{-1} H (u) \, . 
		\]

	      By Lemma \ref{Gamma_L2} we have $H(x) = \mathcal{F}^{-1} (w (2 \pi \cdot)-1)$; we in fact claim $\|\HH_r\|_{L^2 ([0,r]) ,  L^2 ([0,r])} \leq \delta < 1$. Indeed:
	\begin{align*}
	  \|\HH_r f\|_{L^2 (\R)} = \|\int\limits_0 ^r \mathcal{F}^{-1} (w(2 \pi \cdot)-1) (x-y) f(y) dy\|_{L^2 (\R)}  &= \|(w(2 \pi \cdot)-1) \mathcal{F}(f\chi_{[0,r]})\|_{L^2 (\R)} \\
	  &\leq \|w-1\|_{L^{\infty} (\R)} \|\mathcal{F} (f \chi_{[0,r]}) \|_{L^2 (\R)} \\
	  &\leq \delta \|f\|_{L^2 ([0,r])} \, .
	      \end{align*}

		Thus by geometric sum
	\[
	  \Gamma_r (u,0) = (I + \HH_r)^{-1} H (u)= \sum\limits_{l=0}^{\infty} (-\HH_r )^l H (u) \, .
		\]
	
	We estimate $\sum\limits_{l=0}^{\infty} (-\HH_r )^l H (u)$ and all its derivatives. Differentiate \eqref{H_operator_def} to get  
	\[
	  \left | \left ( \frac{d}{d u} \right )^j \HH_r (f) (u) \right | = \left |\int\limits_0 ^r H ^{(j)} (u-v) f(v) \, dv \right | \leq \|H ^{ (j)} \|_{L^2 (\R)} \|f\|_{L^2([0,r])} \, 
		\]
		for all $0 \leq j \leq k$. Take $f = (-\HH_r)^{l-1} H $ to get 
	\begin{align*}
	  \left | \left ( \frac{d}{d u} \right )^j (-\HH_r )^l H (u) \right | &\leq \| H^{ (j)} \|_{L^2 (\R)} \|\HH_r ^{l-1} H\|_{L^2 ([0,r])} \\
	  &= \| H^{ (j)} \|_{L^2 ([0,r])} \|\HH_r\|_{L^2 ([0,r]) , L^2 (\R)}^{l-1} \| H\|_{L^2 ([0,r])} \\
		&\lesssim L \delta ^{l} \, ,
	\end{align*}
		where in the last inequality we estimated 
		\[
		   \|H^{(j)}\|_{L^2 (\R)} \lesssim \|(w-1)\lambda^j\|_{L^2 (\R)} \leq \|(w-1)\langle \lambda \rangle^k\|_{L^2 (\R)} \leq  \delta^{1/2}  L ^{1/2} \, ,
		 \] and similarly for $\|H\|_{L^2 ([0,r])}$.

			Thus the sum $\sum\limits_{l=0}^{\infty} \left ( \frac{d}{d u} \right )^j (-\HH_r )^l H (u)$ converges absolutely and so one can use the dominated convergence theorem to show we get the desired estimate on 
			\[
			  \partial_u ^j \Gamma_r (u,0) = \partial_u ^j \sum\limits_{l=0}^{\infty} (-\HH_r )^l H (u) \, .
			\]
\end{proof}

Now we proceed with our usual functional-analytic approach.

\begin{Lem}Suppose that $\langle \lambda \rangle^k (w-1) \in L^1 (\R)$. Then
	\begin{equation}\label{Remainder_Exponential_System}
		R_{k,r} = \P_{[0,r]} R_{k, r} \, 
	\end{equation}
and 	\begin{equation}\label{Remainder_Ortho}
		\P_{[0,r]} (w-1) R_{k,r} = - \P_{[0,r]} R_{k,r}  - \P_{[0,r]} (w-1) (\lambda^k \e{i\lambda r} + \sum\limits_{l=1}^k a_{l, r}  \lambda^{k-l} )\, 
	\end{equation}
	in $L^p (\R)$ for $2 \leq p < \infty$.
		If in addition $|\langle \lambda \rangle^k (w-1)| \leq \delta < 1$, then
		\begin{equation}\label{Remainder_Linfinity_functional_1}
		  R_{k,r} (\lambda) =  - (I - \P_{[0,r]} (1-w))^{-1} \P_{[0,r]} (w-1) (\lambda^k \e{i\lambda r} + \sum\limits_{l=1}^k a_{l, r}  \lambda^{k-l} )\, , 
	      \end{equation}
	      where both sides are well-defined in, e.g., $L^2 (\R)$.	
\end{Lem}

\begin{proof}
  First note that $R_{k,r} \in L^p (\R)$ for $2 \leq p \leq \infty$. Indeed, $p=2$ follows by \eqref{Remainder_integral} and Plancherel, and $p=\infty$ from \eqref{Remainder_integral} and that $\Gamma_r \in C^k ([0,r])$. From this and the fact that $\langle \lambda \rangle ^k (w-1) \in L^1 (\R)$ we have both sides of \eqref{Remainder_Exponential_System} and \eqref{Remainder_Ortho} are sensical. Trivially, \eqref{Remainder_Exponential_System} holds. Meanwhile \eqref{Remainder_Ortho} is equivalent to 
	\[
		\P_{[0,r]} \left ( (w-1) R_{k,r} + R_{k,r}  + (w-1) (\lambda^k \e{i\lambda r} + \sum\limits_{l=1}^k a_{l, r} \lambda^{k-l} ) \right ) = 0\, 
		\]
		or rather
		\[
		  \P_{[0,r]}  (w \lambda^k P(r, \lambda) - \lambda^k \e{i \lambda r} -\sum\limits_{l=1}^k a_{l, r} \lambda^{k-l})= 0\, , 
			\]
			which follows by applying Lemma \ref{orthogonal_inner_prod} and \eqref{def_a}. 

		Add \eqref{Remainder_Ortho} and \eqref{Remainder_Exponential_System} to yield
		\[
		(I - \P_{[0,r]} (1-w)) R_{k,r} (\lambda) =  - \P_{[0,r]} (w-1) (\lambda^k \e{i\lambda r} + \sum\limits_{l=1}^k a_{l, r}  \lambda^{k-l} )\, .
	      \]
	      Then on $L^2 (\R)$, we have 
	      \[
		\|\proj{0}{r}{} (1-w)\|_{2,2} \leq \left \| \proj{0}{r}{} \right \|_{2,2} \|1-w\|_{L^{\infty} (\R)} \leq 1 \cdot \delta < 1 \, .
	      \]
	      Thus the operator $(I - \P_{[0,r]} (1-w))$ has bounded inverse $\sum\limits_{k=0}^{\infty} (\P_{[0,r]} (1-w))^k$ on $L^2 (\R)$, and so \eqref{Remainder_Linfinity_functional_1} follows.
	    \end{proof}

	    We will also need the following Lemma to estimate the Fourier projections.
	    \begin{Lem}\label{Lemma_projections_bded_Linfinity}
	      If $\delta \in (0,1)$, then there exists $\epsilon (\delta) \in (0, \frac{1}{2} )$, with $\lim\limits_{\delta \to 0} \epsilon (\delta) = \frac{1}{2}$, such that for all $p$ satisfying $|\frac{1}{p} - \frac{1}{2}| < \epsilon (\delta)$ we have
	      \[
		\|\proj{0}{r}{}\|_{p,p} \leq \frac{1}{\sqrt{\delta}} \, .
	      \]
	    \end{Lem}
	    \begin{proof}

	      By \eqref{projection_as_hilbert_transforms} and self-adjointness of $\proj{0}{r}{}$, it follows that $ \|\proj{0}{r}{}\|_{p',p'} = \|\proj{0}{r}{}\|_{p,p} \leq \|\cal{H}\|_{p,p}$, where $\cal{H}$ is the Hilbert Transform. By \cite{Pichorides1972}, $1 \leq \|\cal{H}\|_{p,p} = \tan (\frac{\pi}{2 p})$ for $p \in (1, 2]$. Let $p_0  = p_0 (\delta) \in (1,2]$ be the unique element of $(1,2]$ that satisfies $\tan (\frac{\pi}{2 p_0}) = \delta^{-1/2}$, and let $\epsilon (\delta) \ddd \frac{1}{p_0} - \frac{1}{2}$. 

	      By duality and \eqref{projection_as_hilbert_transforms}, we have $\|\proj{0}{r}{}\|_{p_0 ',p_0 '},  \|\proj{0}{r}{}\|_{p_0,p_0} \leq \delta^{-1/2}$. Interpolate between both estimates to get
\[
		\|\proj{0}{r}{}\|_{p,p} \leq \frac{1}{\sqrt{\delta}} \, 
	      \]
	      for all $p$ satisfying $|\frac{1}{p} - \frac{1}{2}| < \epsilon (\delta)$.

	      To see that $\lim\limits_{\delta \to 0} \epsilon(\delta) =\frac{1}{2}$, it suffices to note that $\tan (\frac{\pi}{2 p})$ is increasing in $\frac{1}{p}$ for $p \in (1, 2]$ and has singularity at $\frac{1}{p} =1$. Thus as $\delta \to 0$, we have $\delta^{-1/2} \to \infty$, meaning that for our definition of $p_0$, we have $\frac{1}{p_0} \to 1$ and so $\epsilon(\delta) \to \frac{1}{2}$. 
	    \end{proof}

We are now in a position to prove Theorem \ref{Thm_remainder_Linfinity}.

\begin{proof}[Proof of Theorem \ref{Thm_remainder_Linfinity}]
  By \eqref{Remainder_Linfinity_functional_1} and Lemma \ref{Remainder_coeffs_bded}, it follows that 
  \[
    \|R_{k,r}\|_{L^p (\R)} \lesssim_k \|(I - \P_{[0,r]} (1-w))^{-1}\|_{p,p} \|\proj{0}{r}{}\|_{p,p} \|\langle \lambda \rangle^k (w-1)\|_{L^p (\R)} \left ( 1 + \frac{\|\langle \lambda \rangle^k (w-1)\|_{L^1 (\R)}}{1- \delta} \right ) \, .
  \]
  First note that by \eqref{projection_as_hilbert_transforms}, we have $\|\proj{0}{r}{}\|_{p,p} \lesssim \|\cal{H}\|_{p,p} \lesssim_p 1$.

  Next, by Lemma \ref{Lemma_projections_bded_Linfinity} we can choose $\epsilon (\delta) \in (0,\frac{1}{2})$, with $\lim\limits_{\delta \to 0} \epsilon (\delta) = \frac{1}{2}$, such that for all $p$ satisfying $|\frac{1}{p} - \frac{1}{2}| < \epsilon (\delta)$, we have 
	\[
		\|\P_{[0,r]}(1-w)\|_{p,p} \leq \|\P_{[0,r]}\|_{p,p} \|1-w\|_{\infty} \leq \sqrt{\delta} < 1 \, .
		\]
		Now by geometric sum, $I - \P_{[0,r]}(1-w)$ has inverse $\sum\limits_{j=0}^{\infty} (\proj{0}{r}{} (1-w))^k$ on $L^p (\R)$, with the bound
	\[
	  \|(I-\P_{[0,r]}(1-w))^{-1}\|_{p,p} \leq \sum\limits_{k=0}^{\infty} \| (\P_{[0,r]} (1-w))^k\|_{p,p} \leq \frac{1}{1-\delta^{1/2}} \sim \frac{1}{1-\delta}
		\]
		whenever $|\frac{1}{p} - \frac{1}{2}| < \epsilon (\delta)$.

  Combine these estimates with the fact that $\frac{1}{1-\delta} \geq 1$, we get 
  \begin{equation}\label{remainder_Linfinite_best_estimate}
    \|R_{k,r}\|_{L^p (\R)} \lesssim_{p,k} \frac{1}{(1- \delta)^2} \|\langle \lambda \rangle^k (w-1)\|_{L^p (\R)} (1 + \|\langle \lambda \rangle^k (w-1)\|_{L^1 (\R)}) \, 
\end{equation}
for all $p$ satisfying $|\frac{1}{p} - \frac{1}{2}| < \epsilon (\delta)$, from which part \ref{general_Nazarov_remainder} follows. As for part \ref{perturbative_Nazarov_remainder}, given a $p \in (1,\infty)$ choose $\delta_0 (p)$ small enough so that $|\frac{1}{p} - \frac{1}{2}| < \epsilon (\delta)$ for all $\delta \in (0, \delta_0)$. Then the estimate 
\[
  \|\langle \lambda \rangle^k (w-1)\|_{L^p (\R)} \leq \delta^{1/p'} \|\langle \lambda \rangle^k (w-1)\|_{L^1 (\R)} ^{1/p} \, ,
\]
combined with \eqref{remainder_Linfinite_best_estimate}, yields part \ref{perturbative_Nazarov_remainder}.
  \end{proof}

\section{Proof of Lemma \ref{lemma_mu_weight_Lp}}\label{tech_lemma_pf_section}
Before proving Lemma \ref{lemma_mu_weight_Lp}, we will need terminology and lemmas for a Calder\'on-Zygmund decomposition.

\begindef An interval $[a, a+r]$ has \textit{left neighbor} $[a-r,a]$ and \textit{right neighbor} $[a+r, a+2r]$.

Two intervals are \textit{almost disjoint} if their intersection is empty or a single point.

Given an interval $I$, let $D_I \ddd \mathrm{dist} (I, 0)$ denote its distance from the origin.

An interval $I$ in $\R$ is \textit{dyadic} if it is of the form $\{[j 2^{-n}, (j+1)2^{-n}]\}_{j, n \in \Z}$.

Two dyadic intervals $I$ and $J$ are \textit{siblings} if their union is a dyadic interval $K$ of strictly larger size, which we call the \textit{parent}.

So define a partial ordering $\preceq$ on the dyadic intervals: if $I \subset K$, we write $I \preceq K$, and say $K$ is an \textit{ancestor} of $I$.
\smallskip
Note that two dyadic intervals are either almost disjoint, or comparable via $\preceq$.

See e.g.\ \cite[Chapter 1, Section 3]{stein} or \cite[Chapter 1, Theorem 4]{stein_singular_ints} for other variants of the Calder\'on-Zygmund decomposition below.
\begin{Lem}[Calder\'on-Zygmund decomposition]\label{CZ_decomp_lemma} Let $d\mu = \langle \lambda \rangle^q d \lambda$ for some $q \geq 0$, and suppose $u \in L^1 _{\mu} (\R)$. If $\beta>0$ and $E_{\beta} \ddd \{\lambda \in \R~:~ |u| > \beta\}$, then there exists a collection of dyadic intervals $\{I_j\}$ with the following properties:
	\begin{enumerate}[label=(\roman*)]
		\item\label{smallset_contained}$
		E_{\beta} \ddd \{\lambda \in \R~:~ |u| > \beta\} \subset \bigcup\limits_{j} I_j
		$.
	\item \label{maximal_intervals} Each $I_j$ is maximal with respect to $\preceq$ among those dyadic intervals $I$ satisfying $\beta < \frac{1}{\mu(I)} \int\limits_{I} |u| d\mu$.
	\item \label{CZ_disjoint} The $\{I_j\}$ are pairwise almost disjoint.
\item We have the estimate
	\begin{equation}\label{sum_intervals_CZ_mu}
		\mu (E_{\beta}) \leq \sum\limits_j \mu(I_j) \leq \frac{1}{\beta} \|u\|_{L^1 _{\mu} (\R)} < \infty \, .
	\end{equation}
	\end{enumerate}
	This is known as the Calder\'on-Zygmund decomposition of $u$ at level $\beta$.
\end{Lem}
\begin{proof}
  Fix $\beta \in (0,1)$, and by substituting $u$ with $|u|$, assume without loss of generality that $u \geq 0$. Since $u \in L^1 _{\mu} (\R)$, then for any interval $I$ of size $|I| \geq \frac{1}{\beta} \int\limits_{\R} u \, d\mu$, we have $\langle u \rangle_{I, \mu} \leq \beta$, since $\mu(I)\geq |I|$. Thus any dyadic interval $J$ has an ancestor $I_0 \succeq J$ such that for all dyadics $I \succeq I_0$, we have $\langle u \rangle_{I, \mu} \leq \beta$. Meaning if we consider the set of dyadic intervals
	\[
	  \cal{I} \ddd \{I \text{ dyadic} ~:~	\beta < \langle u \rangle_{I, \mu} \} \, ,
		\]
		then for each $I \in \cal{I}$, there exists $I_{\max} \in \cal{I}$ such that $I \preceq I_{\max}$, and $I_{\max}$ is maximal in $\cal{I}$ with respect to $\preceq$. Define $\{I_j\}$ to be the set of maximal dyadic intervals in $(\cal{I}, \preceq)$. By definition, $\{I_j\}$ satisfies Property \ref{maximal_intervals}, which then immediately implies Property \ref{CZ_disjoint}.

		Meanwhile \eqref{sum_intervals_CZ_mu} follows from Property \ref{smallset_contained} and that $\beta < \frac{1}{\mu(I_j)} \int\limits_{I_j} u \, d\mu$: 
	\[
		\mu (E_{\beta}) \leq\sum\limits_j \mu(I_j) \leq \sum\limits_{j} \frac{1}{\beta} \int\limits_{I_j} |u| d\mu \leq \frac{1}{\beta} \|u\|_{L^1 _{\mu} (\R)} < \infty \, .
		\]

		We are left with showing Property \ref{smallset_contained}, which will follow if we can show $u \leq \beta$ for almost every $x$ in $(\bigcup\limits_j I_j)^c$. Consider the $L^1 (\R)$ function $v \ddd u \langle \lambda \rangle^q$. By the dyadic Lebesgue differentiation theorem, for almost every $x \in \R$, we have $\lim\limits_{n \to \infty} \langle v \rangle_{J_n} =v(x)$, where $J_n$ is any sequence of dyadic intervals shrinking to $x$. Fix such a point $x \in (\bigcup\limits_j I_j)^c$. Then by construction of $\{I_j\}$, for every dyadic interval $J$ containing $x$, it follows that 
	\[
	  \beta \geq \langle u \rangle_{J, \mu} = \frac{|J|}{\mu(J)} \langle v \rangle_J \, .
		\]
	Letting $J$ shrink down to $x$, the above yields
	\[
		\beta \geq \frac{1}{\langle x \rangle^q} u(x) \langle x \rangle^q  \, ,
		\]
		i.e.\ $\beta \geq u(x)$.
	Since $x$ arbitrary in $(\bigcup\limits_j I_j)^c \setminus N$ where $N$ is a set of measure $0$, we get Property \ref{smallset_contained}.
\end{proof}

\begin{Lem}[Calder\'on-Zygmund decomposition for $L^{p_1} (\R) +L^{p_2} (\R)$]\label{CZ_decomp_lemma_L12}
  Let $d\mu = \langle \lambda \rangle^q d \lambda$ for some $q \geq 0$, and suppose $u = u_1 + u_2$ with $u_1 \in L^{p_1} _{\mu} (\R)$, $u_2 \in L^{p_2} _{\mu} (\R)$, where $1 \leq p_1 \leq p_2 < \infty$. For each $\beta > 0$, define $E_{\beta} \ddd \{\lambda \in \R~:~ |u| > \beta\}$ and $E ^i _{\beta/2} \ddd \{ \lambda ~:~ |u_i| > \frac{\beta}{2} \}$, $i=1,2$. Then $E_{\beta } \subset E_{\beta/2} ^1 \cup E_{\beta/2} ^2$, all of which are of finite $\mu$-measure, and there exists a collection of dyadic intervals $\{I_j\}$ with the following properties:
\begin{enumerate}[label=(\roman*)]
		\item\label{smallset_contained_L12}$
		E_{\beta/2} ^1 \cup E_{\beta/2} ^2 \subset \bigcup\limits_{j} I_j
		$.
	\item \label{CZ_disjoint_L12} The $\{I_j\}$ are pairwise almost disjoint.
\item We have the estimate
	\begin{equation}\label{sum_intervals_CZ_L12}
	  \sum\limits_j \mu(I_j) \lesssim_{p_1, p_2} \frac{1}{\beta^{p_2}} (\beta^{p_2 -p_1} \|u_1\|_{L^{p_1} _{\mu} (\R)} ^{p_1}+ \|u_2\|_{L^{p_2} _{\mu} (\R)} ^{p_2} ) \, .
	\end{equation}
\item We have
	\begin{equation}\label{avg_w_close_1_L12}
	  \langle |u| \rangle_{K, \mu} \leq \beta \, , \quad K \text{ any ancestor of } I_j \, . 
		\end{equation}

	\end{enumerate}
\end{Lem}
\begin{proof}
	By the triangle inequality, $E_{\beta } \subset E_{\beta/2} ^1 \cup E_{\beta/2} ^2$. Thus $\mu(E_{\beta}) \leq \mu(E_{\beta/2} ^1) + (E_{\beta/2} ^2) < \infty$, since $u_i \in L^{p_i} _{\mu} (\R)$.

	For $i=1,2$, we apply Lemma \ref{CZ_decomp_lemma} to $|u_i|^{p_i}$ at level $\left ( \frac{\beta}{2} \right )^{p_i}$ and get a collection of pairwise disjoint dyadic intervals $\{I_j ^i\}_{j \geq 0}$ such that 
	\[
		E^{i} _{\beta /2} \subset \bigcup\limits_j I_j ^i \, . 
		\] By Lemma \ref{CZ_decomp_lemma} \ref{maximal_intervals},
		\[
			\langle |u_i|^{p_i} \rangle_{I_j ^i , \mu} ^{1/p_i} > \frac{\beta}{2} \, .
			\]
			Let $\{\dot{I_j}\}_j = \{ I_j ^1 \}_j \cup \{ I_j ^2 \}_j$. Note by maximality of the $I_{j} ^i$'s specified by Lemma \ref{CZ_decomp_lemma} \ref{maximal_intervals}, each $\dot{I_j}$ is contained in at most one other $\dot{I_k}$. So now let $\{I_j\}$ be the maximal intervals among $\{\dot{I_j}\}$ with respect to $\preceq$; maximality ensures the $\{I_j\}$ are pairwise almost disjoint and so \ref{CZ_disjoint_L12} holds. Note maximality also yields
	\[
	E_{\beta} \subset E_{\beta/2} ^1 \cup E_{\beta/2} ^2 \subset \left ( \bigcup\limits_{j} I_j ^1 \right ) \cup \left (\bigcup\limits_{j} I_j ^2 \right )\subset \bigcup\limits_j I_j \, , 
		\]
		i.e.\  \ref{smallset_contained_L12} holds. And \eqref{sum_intervals_CZ_mu} yields \eqref{sum_intervals_CZ_L12}, as 
	\[
		\sum\limits \mu(I_j) \leq \sum\limits_j \mu(I_j ^1) + \sum\limits_j \mu(I_j ^2) \lesssim_{p_1, p_2} \frac{1}{\beta^{p_1}} \|u_1\|_{L^{p_1} _{\mu} (\R)} ^{p_1} + \frac{1}{\beta^{p_2}} \|u_2\|_{L^{p_2} _{\mu} (\R)} ^{p_2}  = \frac{1}{\beta^{p_2}} (\beta^{p_2 -p_1} \|u_1\|_{L^{p_1} _{\mu} (\R)} ^{p_1}+ \|u_2\|_{L^{p_2} _{\mu} (\R)} ^{p_2} ) \, .
	\]

Also note by our choice of $I_j$ as the maximal intervals among $\{ I_j ^1 \}_j \cup \{ I_j ^2 \}_j$, then for any ancestor $K$ of $I_j$, it follows that 
	\[
		\langle |u_1|^{p_1} \rangle_{K , \mu} ^{1/p_1} \, , \, \langle |u_2|^{p_2} \rangle_{K , \mu} ^{1/p_2} \leq \frac{\beta}{2} \, .
		\]
		The triangle inequality followed by H\"older's inequality then imply \eqref{avg_w_close_1_L12}, i.e.\
		\[
			\langle |u| \rangle_{K , \mu} \leq \langle |u_1| \rangle_{K , \mu}  + \langle |u_2| \rangle_{K , \mu} \leq \langle |u_1|^{p_1} \rangle_{K , \mu}^{1/p_1}  + \langle |u_2|^{p_2} \rangle_{K , \mu} ^{1/p_2} \leq \beta \, .
			\]
\end{proof}

We also need the following additional lemma. Recall that for an interval $I$, we defined $D_I = \mathrm{dist}\left( I,0 \right)$.

\begin{Lem}[$\mu$ is flat away from $0$]\label{mu_flat}
  Let $d \mu =\langle \lambda \rangle^q d \lambda$ for some $q \geq 0$, and suppose $I$ is an interval such that $\mu (I) \leq L$, where $L$ is some constant. Then there exists $D = D(L,q) \geq 0$ sufficiently large so that if $D_I \geq D$, then
	\begin{equation}\label{flat_measure_1}
	  \langle \lambda \rangle ^q \sim_q \langle D_I \rangle^q \, , \quad \lambda \text{ in } I \text{ or either of its neighbors} \, ,
	\end{equation}  In particular,
	\begin{equation}\label{flat_measure_avg}
	  \langle f \rangle_{J, \mu} \sim_q \langle f \rangle_J
	\end{equation}
	for all $f \geq 0$, where $J$ may equal $I$, either of its neighbors, or its parent in the case that $I$ is dyadic.
\end{Lem}
	\begin{proof}
	  If $q=0$, the lemma is trivial. 

	  For $q > 0$, choose $D$ so large that $\langle D \rangle^q \geq 100 L$. Thus if $D_I \geq D$, then
		\[
			100 L |I|\leq |I| \langle D \rangle^q \leq |I| \langle D_I \rangle^q \leq \mu(I)\leq L			\]
			and so $|I| < \frac{1}{100}$. Thus on $I$ or either of its neighbors, we have $\langle \lambda \rangle^q \sim_q \langle D_I \rangle^q$. Hence for $J=I$ or either of its neighbors, \eqref{flat_measure_avg} holds as
		\[
		  \langle f \rangle_{J , \mu} = \frac{1}{\mu(J)} \int\limits_J f d \mu = \frac{1}{\mu(J)} \int\limits_J f(\lambda) \langle \lambda \rangle^q d \lambda \sim_q \frac{1}{\langle D_I \rangle^q |J|} \int\limits_J f(\lambda) \langle D_I \rangle^q d \lambda = \langle f \rangle_J \, .
			\]
			If $I$ is dyadic, then since the parent of $I$ is the union of $I$ and one of its neighbors, it follows $\langle \lambda \rangle^q \sim_q \langle D_I \rangle^q $ on the parent of $I$. Then the same computations as done previously yield \eqref{flat_measure_avg} when $J$ is the parent of $I$. 
	\end{proof}


\begin{proof}[Proof that $\|w^{1/\widetilde{p}} - w^{-1/\widetilde{p}'}\|_{L^p _{\mu}  (\R)}<\infty$ in Lemma \ref{lemma_mu_weight_Lp}]
  Let $\widetilde{p} \in (1, \infty)$ and $p \geq p_2$. Let's apply Lemma \ref{CZ_decomp_lemma_L12} with $u = w-1$ and $\beta = \alpha$, where $\alpha \in (0,\frac{1}{2})$, so that $E_{\alpha} \ddd \{\lambda ~:~ |w(\lambda)-1| > \alpha\}$ and $E ^i _{\alpha/2} \ddd \{ \lambda ~:~ |u_i (\lambda)| > \alpha /2 \}$. Then $E_{\alpha} \subset E_{\alpha/2} ^1 \cup E_{\alpha/2} ^2$.

We begin by splitting
	\begin{equation}\label{first_split_L12}
		\|w^{1/\widetilde{p}} - w^{-1/\widetilde{p}'}\|_{L^p _{\mu} (\R)} ^p = \|w^{1/\widetilde{p}} - w^{-1/\widetilde{p}'}\|_{L^p _{\mu} ((E_{\alpha/2} ^1 \cup E_{\alpha/2} ^2) ^c )} ^p + \|w^{1/\widetilde{p}} - w^{-1/\widetilde{p}'}\|_{L^p _{\mu} (E_{\alpha/2} ^1 \cup E_{\alpha/2} ^2)}^p  \ddd A + B\, .
	\end{equation}
	Since $(E_{\alpha/2} ^1 \cup E_{\alpha/2} ^2) ^c \subset E_{\alpha} ^c = \{|w-1| \leq \alpha \}$, we may estimate
\[
	A = \int\limits_{(E_{\alpha/2} ^1 \cup E_{\alpha/2} ^2) ^c} w^{-p/\widetilde{p}'} |w-1|^p \, d \mu \leq (1-\alpha)^{-p/\widetilde{p}'} \int\limits_{(E_{\alpha/2} ^1 )^c \cap (E_{\alpha/2} ^2)^c} |w-1|^p \, d\mu \,  .
	\]
	By the triangle inequality, $A$ is  
\[
  \lesssim_{p, \widetilde{p}} \int\limits_{(E_{\alpha/2} ^1 )^c \cap (E_{\alpha/2} ^2)^c} |u_1|^p \, d\mu + \int\limits_{(E_{\alpha/2} ^1 )^c \cap (E_{\alpha/2} ^2)^c} |u_2|^p \, d\mu \, \lesssim_{p_1, p_2} \alpha^{p-p_1}\int\limits_{(E_{\alpha/2} ^1 )^c} |u_1|^{p_1} \, d\mu + \alpha^{p-p_2} \int\limits_{ (E_{\alpha/2} ^2)^c } |u_2|^{p_2} \, d\mu 
	\]
	Thus, 
	\begin{equation}\label{bd_A_L12}
	  A \lesssim_{p, \widetilde{p}, p_1, p_2} \alpha^{p-p_2} (\alpha^{p_2 - p_1} \|u_1\|_{L^{p_1} _{\mu} (\R)} ^{p_1} + \|u_2\|_{L^{p_2} _{\mu} (\R)} ^{p_2}) \, . 
	\end{equation}

By the triangle inequality followed by Lemma \ref{CZ_decomp_lemma_L12} \ref{smallset_contained_L12},
	\begin{equation}\label{B_first_split_L12}
		B \lesssim_p \int\limits_{E_{\alpha/2} ^1 \cup E_{\alpha/2} ^2} w^{p/\widetilde{p}} d \mu + \int\limits_{E_{\alpha/2} ^1 \cup E_{\alpha/2} ^2} w^{-p/\widetilde{p}'} d \mu  \leq \left ( \sum\limits_{j} \int\limits_{I_j} w^{p/\widetilde{p}} d \mu \right )+ \left ( \sum\limits_{j} \int\limits_{I_j} w^{-p/\widetilde{p'}} d \mu \right ) \ddd B_1+B_2\, .
	\end{equation}

	Recall $w \in A_{p'} (\R)$ and so $w^{-p/p'} \in A_p(\R)$. By applying Lemma \ref{Reverse_Holder_A2} to both of these weights, we can choose $\epsilon (\gamma, p) \in (0, \frac{1}{2})$, with $\lim\limits_{\gamma \to 1} \epsilon(\gamma,p) = \min\{\frac{1}{p}, \frac{1}{p'}\}$, so that whenever $\widetilde{p}$ is chosen so that $|\frac{1}{\widetilde{p}} - \frac{1}{p}| < \epsilon (\gamma, p)$, we have 
	\begin{equation}\label{Reverse_Holder_integrability_Ap_weights}
	  \langle w^{p/\widetilde{p}} \rangle_{I} \lesssim_{\gamma, p} \langle w \rangle_{I} ^{p/\widetilde{p}} \, , \quad  \langle w^{-p/\widetilde{p'}} \rangle_{I} \lesssim_{\gamma, p} \langle w^{-p/p'} \rangle_{I}^{p'/\widetilde{p'}} \, , \quad \text{for all intervals } I \, .
	\end{equation}
	In particular, this means $w^{p/\widetilde{p}}$ and $w^{-p/\widetilde{p}'}$ are locally integrable.

Let us focus on $B_2$ first, as $B_1$ will be similar. Set $D_j \ddd \mathrm{dist} (I_j, 0)$, then write
	\[
		\sum\limits_j \int\limits_{I_j} w^{-p/\widetilde{p'}} d \mu = \sum\limits_{j \, : \, D_j < D} \int\limits_{I_j} w^{-p/\widetilde{p'}} d \mu + \sum\limits_{j \, : \, D_j \geq D} \int\limits_{I_j} w^{-p/\widetilde{p'}} d \mu \, ,
		\]
			where $D>0$ is a constant which will be chosen later.

			Since $|I_j| \leq \mu (I_j) \lesssim_{p_1, p_2} \frac{1}{\alpha^{p_2}} (\beta^{p_2 -p_1} \|u_1\|_{L^{p_1} _{\mu} (\R)} ^{p_1}+ \|u_2\|_{L^{p_2} _{\mu} (\R)} ^{p_2} )$ by \eqref{sum_intervals_CZ_L12}, then the length of each interval $I_j$ is bounded. Since $w^{-p/\widetilde{p}'}$ is locally integrable, then 
			\[
				\sum\limits_{j \, : \, D_j < D} \int\limits_{I_j} w^{-p/\widetilde{p}'} d \mu \leq \int\limits_{\widetilde{I}} w^{-p/\widetilde{p}'} \langle \lambda \rangle^q \, d \lambda < \infty \, ,
				\]
				where $\widetilde{I}$ is some sufficiently large interval centered at $0$. And so it suffices to show
				\[
					\sum\limits_{j \, : \, D_j \geq D} \int\limits_{I_j} w^{-p/\widetilde{p}'} d \mu =  \sum\limits_{j \, : \, D_j \geq D} \mu (I_j) \langle  w^{-p/\widetilde{p}'} \rangle_{I_j , \mu} < \infty \, .
					\]
					Since $\sum_j \mu (I_j) < \infty$, to show $B_2 < \infty$, it suffices to show $\langle  w^{-p/\widetilde{p}'} \rangle_{ I_j, \mu} \lesssim_{p, \gamma, q} 1$; similarly $B_1 < \infty$ will follow from showing $\langle  w^{p/\widetilde{p}} \rangle_{I_j , \mu} \lesssim_{p, \gamma} 1$ for a suitable choice of $D$.

					Since there exists some constant $C = C(p_1, p_2)$ such that 
					\[
	\mu (I_j) \leq C \frac{1}{\alpha^{p_2}} \left ( \alpha^{p_2 -p_1} \|u_1\|_{L^{p_1} _{\mu} (\R)} ^{p_1}+ \|u_2\|_{L^{p_2} _{\mu} (\R)} ^{p_2} \right ) \, ,
	\]
	we can take $D = D(C \frac{1}{\alpha^{p_2}} (\alpha^{p_2 -p_1} \|u_1\|_{L^{p_1} _{\mu} (\R)} ^{p_1}+ \|u_2\|_{L^{p_2} _{\mu} (\R)} ^{p_2} ),q)$ as in Lemma \ref{mu_flat} so that when $D_j \geq D$, we have $\langle \lambda \rangle^q \sim_q \langle D_j \rangle^q$ on $I_j$, its neighbors and its parent $K_j$.

	For such an interval $I_j$, recall \eqref{avg_w_close_1_L12} implies 
	\[
	|\langle w-1 \rangle_{K_j ,\mu}| \leq  \langle |w-1| \rangle_{K_j ,\mu} \leq \alpha \leq \frac{1}{2} \, ,
	\]
	where $K_j$ is the parent of $I_j$. And so $\langle \lambda \rangle^q \sim_q \langle D_j \rangle^q$ on $K_j$ and \eqref{flat_measure_avg} together imply 
			\[
				\langle w \rangle_{K_j} \sim \langle w \rangle_{K_j , \mu } = 1  + \langle w-1 \rangle_{K_j , \mu } \sim 1\, . 
				\]
				In particular, 
				\[
				  \langle w \rangle_{I_j , \mu} \sim_q \langle w \rangle_{I_j} \lesssim \langle w \rangle_{K_j}  \sim 1 \, 
					\]
				and so  \eqref{Reverse_Holder_integrability_Ap_weights} implies
				\[
				  \langle w^{p/\widetilde{p}} \rangle_{I_j , \mu} \sim_q \langle w^{p/\widetilde{p}} \rangle_{I_j} \lesssim_{\gamma, p} \langle w \rangle_{I_j} ^{p/\widetilde{p}} \sim_{q, p, \gamma} \langle w \rangle_{I_j , \mu} ^{p/\widetilde{p}} \lesssim_{p, \gamma, q} 1 \, .
					\]
				 Similarly, since $w^{-p/p'} \in A_p (\R)$, then 
		\[
		  \langle w^{-p/p'} \rangle_{ I_j , \mu} \sim_q \langle w^{-p/p'} \rangle_{I_j} \lesssim \langle w^{-p/p'} \rangle_{K_j} \leq [w^{-p/p'}]_{A_p (\R)} \langle w \rangle_{K_j} ^{-p'/p} \lesssim_{p, \gamma} 1 \, 
		\]
and therefore \eqref{Reverse_Holder_integrability_Ap_weights} implies
				\[
				  \langle w^{-p/\widetilde{p}'} \rangle_{ I_j , \mu} \, , \quad \langle w ^{p/\widetilde{p}}\rangle_{ I_j , \mu}   \lesssim_{p, \gamma} 1 \, .
					\]
\end{proof}

For the proof of \eqref{perturbative_est_weight}, we need to understand how $[w]_{A_{\infty} (\R)}$ affects $\log w$.

\begin{Lem}\label{avg_logw_vs_w-1_lemma}
  Suppose $w \in A_{\infty} (\R)$ and let $J$ be an interval. If $\langle |w-1| \rangle_J \leq \frac{1}{2}$, then 
	\[
		|\langle \log w \rangle_J| \lesssim \langle |w-1| \rangle_J + \log [w]_{A_{\infty} (\R)} \, .
		\]
\end{Lem}
\begin{proof}
	By Jensen's inequality, 
	\[
		\langle \log w \rangle_J \leq \log \, \langle w \rangle_J = \log (\langle w-1 \rangle_J + 1)\leq \log (\langle |w-1| \rangle_J + 1) \lesssim \langle |w-1| \rangle_J \, .
	\]
	But also since $[w]_{A_{\infty} (\R)} <\infty$, then
	\[
		\e{- \langle \log w \rangle_J} \leq \frac{[w]_{A_{\infty} (\R)}}{\langle w \rangle_J} \leq \frac{[w]_{A_{\infty} (\R)}}{1 - \langle |w-1| \rangle_J} \, ,
		\]
		 which implies
		\[
			\langle \log w \rangle_J \geq \log (1 - \langle |w-1| \rangle_J) - \log [w]_{A_{\infty} (\R)} \, .
			\]

	Combine the estimates of $\langle \log w \rangle_J$ from above and below to get
	\[
		|\langle \log w \rangle_J | \lesssim \langle |w-1| \rangle_J + |\log (1 - \langle |w-1| \rangle_J) | + \log [w]_{A_{\infty} (\R)} \lesssim  \langle |w-1| \rangle_J + \log [w]_{A_{\infty} (\R)} \, .
		\]
\end{proof}

Next, we recall a few facts about $\rm{BMO}$, the space of functions with bounded mean oscillation, and how it relates to $A_p (\R)$ weights. Recall that $f\in {\rm BMO}(\R^d)$ if
\[
\|f\|_{{\rm BMO}(\R^d)} \ddd \sup_{B}\,\langle|f-\langle f\rangle_B|\rangle_B<\infty\,,
\]
where $B$ denotes a ball in $\R^d$ (see, e.g., p.140 in \cite{stein}). Functions in $\rm BMO (\R^d)$ all satisfy the John-Nirenberg estimates below.

\begin{Thm}[John-Nirenberg, {\cite[p.144-146]{stein}}]\label{John-Nirenberg_Thm}
	Suppose $f \in \mathrm{BMO}(\R^d)$. Then
	\begin{enumerate}[label=(\alph*)]
		\item There exist positive absolute constants $c_1, c_2$ such that for each $\alpha > 0$ and every ball $B$,
			\[
				\frac{1}{|B|} |\{x \in B ~:~ |f(x) - \langle f \rangle_B | > \alpha\}| \leq c_1 \e{-c_2 \alpha /\|f\|_{\mathrm{BMO}}} \, .
				\]
			\item For any $p < \infty$, $f \in L^p_{loc} (\R^d)$ and
			\[
				\langle |f - \langle f \rangle_B|^p \rangle_B \lesssim_p \|f\|_{\mathrm{BMO} (\R^d)} ^p \, 
				\]
				for all balls $B$.
			\item 
			  If $0 \leq  \mu \|f\|_{\mathrm{BMO}(\R^d)} \lesssim 1$, then
			\[
				\langle \e{\mu|f-\langle f \rangle_B|} \rangle_B \lesssim 1 \, .
				\]
	\end{enumerate}
\end{Thm}

It is well known that if $w \in A_p (\R^d)$, then $\log w \in \mathrm{BMO} (\R^d)$. We also have the following well-known quantification  (see, e.g., \cite[Corollary 6]{korey}).
\begin{Lem}\label{sd_ll}
  If $w \in A_p (\R)$ for some $p \in (1,\infty]$, then $\|\log w\|_{\rm BMO} \leq \log (2 [w]_{A_p (\R^d)})$. If in addition $[w]_{A_p(\R^d)}=1+\tau, \tau\in [0,1]$, then
\[
\|\log w\|_{\rm BMO}\lesssim \sqrt\tau\,.
\]
\end{Lem}

\begin{proof}[Proof of \eqref{perturbative_est_weight}] By Lemma \ref{sd_ll}, when $\tau_0 (p) \leq \frac{1}{2}$ we have $\|\log w\|_{\mathrm{BMO}(\R)} \lesssim \tau^{1/2}$. Take $\tau_0 (p)$ small enough so that additionally $\|\log w\|_{\mathrm{BMO}(\R)} \leq \frac{1}{10}$.  

  Set $\alpha \ddd \tau^{1/2}$ and define $E_{\alpha}$, $E_{\alpha/2} ^i$ and $\{I_j\}$ as in the proof that $\|w^{1/\widetilde{p}} - w^{-1/\widetilde{p}'} \|_{L^p _{\mu} (\R)} <\infty$, i.e.\ by applying Lemma \ref{CZ_decomp_lemma_L12} with $u=|w-1|$ and $\beta = \alpha$. We begin by splitting $\|w^{1/p} - w^{-1/p'}\|_{L^p (\R)} ^p$ as in \eqref{first_split_L12}, with the same definitions of $A$ and $B$. By \eqref{bd_A_L12} and our choice of $\alpha$, it follows that 
	\[
	  A \lesssim_{p, p_1, p_2} \tau ^{(p-p_2)/2} (\tau^{\frac{p_2 - p_1}{2}} \|u_1\|_{L^{p_1} (\R) } ^{p_1}+ \|u_2\|_{L^{p_2} (\R)} ^{p_2}) \, .
		\]

	As for estimating $B$, we split slightly differently than in \eqref{B_first_split_L12}:
	\[
		B \lesssim_p  \|w^{1/p}  - 1\|_{L^p (E_{\alpha/2} ^1 \cup E_{\alpha/2} ^2)} ^p + \|w^{-1/p'}  - 1\|_{L^p (E_{\alpha/2} ^1 \cup E_{\alpha/2} ^2)} ^p  \ddd  B_1+B_2 \, . 
		\]
It suffices to show
	\begin{equation}\label{bd_B_perturbative_1}
	B_1 , B_2 \lesssim_{p, p_1, p_2} \tau ^{(p-p_2)/2} (\tau^{\frac{p_2 - p_1}{2}} \|u_1\|_{L^{p_1} (\R) } ^{p_1}+ \|u_2\|_{L^{p_2} (\R)} ^{p_2}) \, .		\end{equation}We prove this for $B_1$; the estimate for $B_2$ will follow similarly.

	By Lemma \ref{CZ_decomp_lemma_L12} \ref{smallset_contained_L12},
	\[
		B_1 \leq \sum\limits_{j} \|w^{1/p} - 1\|_{L^p (I_j)} ^p \, .
		\]
	Note that \eqref{bd_B_perturbative_1} will follow if we can show
	\begin{equation}\label{weight_perturbative_interval}
		\|w^{1/p} - 1\|_{L^p (I)} ^p \lesssim_p |I| \tau^{p/2}
	\end{equation}
      for all $I \in  \{I_j\}$: indeed, sum over $I = I_j$ and use \eqref{sum_intervals_CZ_L12} to get \eqref{bd_B_perturbative_1}. As such, fix $I \in \{I_j\}$; our goal is to show \eqref{weight_perturbative_interval}.

	Now note
	\begin{equation}\label{logw_estimate}
		|\langle \log w \rangle_I| \lesssim \tau^{1/2} \, .
	\end{equation}
	Indeed, if $K$ is the dyadic parent of $I$, then \eqref{avg_w_close_1_L12} guarantees $\langle |w-1| \rangle_I \lesssim \langle |w-1| \rangle_{K} \leq \alpha$. By Lemma \ref{avg_logw_vs_w-1_lemma}, we get \eqref{logw_estimate} for $\tau_0 (p)$ sufficiently small, which we will make use of repeatedly.

	Split the left side of \eqref{weight_perturbative_interval} into
	\[ 
		\|w^{1/p} - 1\|_{L^p (I)} ^p \leq \|w^{1/p} - \e{ \langle \log w \rangle_I /p  }\|_{L^p (I)} ^p + \|\e{ \langle \log w \rangle_I  /p } - 1\|_{L^p (I)} ^p \ddd B_{1 1} + B_{1 2} \, .
		\]

	By \eqref{logw_estimate} it follows that \[
	  B_{1 2} = |I| |\e{\langle \log w \rangle_I /p} -1|^p \lesssim_p |I| \tau^{p/2} \, .
			\]
		 Thus it suffices to show $B_{1 1} \lesssim_p |I| \tau^{p/2}$.

		But $B_{1 1}$ equals
		\[
			\e{\langle \log w \rangle_I /p} \|\e{ (\log w - \langle \log w \rangle_I)/p} - 1 \|_{L^p (I)} ^p \lesssim \|\e{f/p } -1\|_{L^p (I)} ^p \, ,
			\]
where $f \ddd \log w - \langle \log w \rangle_I$ and we applied \eqref{logw_estimate} in the inequality. Write
	\[
		\e{f/p} - 1 = \int\limits_{0}^{1/p} \e{s f} f \, ds \,
		\]
		and apply Minkowski's inequality, followed by the Cauchy-Schwarz inequality, to get
\begin{equation*}
	\|\e{f/p } -1\|_{L^p  (I)} \lesssim \|f \e{|f|/p}\|_{L^p  (I)} \leq \|f \|_{L^{2 p } (I)}   \|\e{|f|/p}\|_{L^{2 p } (I)}\, .
\end{equation*}
The John-Nirenberg Theorem \ref{John-Nirenberg_Thm} yields
\[
  \|f\|_{L^{2 p} (I)} \lesssim_{p} \|\log w\|_{\mathrm{BMO} (\R)} |I|^{\frac{1}{2p}} \, , \quad  \|\e{|f|/p}\|_{L^{2 p } (I)}\lesssim_{p } |I|^{\frac{1}{2p}}
\] for $\|\log w\|_{\mathrm{BMO} (\R)}$ small enough, which we can arrange by taking $\tau_0 (p)$ as small as necessary and applying Lemma \ref{sd_ll}. Thus
\begin{align*}
	\|\e{f/p } -1\|_{L^p  (I)} &\lesssim_{p} |I|^{1/p} \| \log w \|_{\mathrm{BMO}(\R)} \lesssim |I|^{1/p} \tau^{1/2}\, .
\end{align*}
	Hence $B_{1 1} \lesssim_{p} |I|^{1/p} \tau^{1/2}$, which completes the proof.
\end{proof}

\section{Inverting \texorpdfstring{$I-Q_{w,p}$}{I-Qwp}: proof of Lemma \ref{thm_inverse_bd_final_basic}}\label{section_I-Qwp_invertible}
Let us introduce some notation and definitions.

\beginntn Given $z = \C$, we will generally write $z = t + i y$, where $t, y$ are real-valued.

Given $K \subset \C$, we let $N(K)$ denote an open set containing $K$, although we allow for the particular set to change line to line.
\smallskip

\begindef Given some $p_* \in [1, \infty]$, define
\begin{equation}\label{p(z)_initial}
  \frac{1}{p(z)} \ddd \frac{z}{p_*}+\frac{1-z}{2}, \quad \frac{1}{p'(z)} \ddd 1-\frac{1}{p(z)}=\frac{1+z}{2}-\frac{z}{p_*}, \quad  z=t+iy \, : -1 \leq t \leq 1 \, . 
\end{equation}

	 Given $\epsilon \geq 0$, define the open strip 
	 \[
	   \Omega_{ < \epsilon} \ddd \left \{ z ~:~ \left |\frac{1}{p(t)} - \frac{1}{2} \right| < \epsilon \right \} \, .
	 \]
	 We define the closed strip $\Omega_{\leq \epsilon}$ similarly.

Given $I$ an interval, let 
	 \[
	   \Omega_{I} \ddd \{ z\in \C ~:~ t \in I\} \, .
	 \]  
	 \smallskip
\begin{Lem} \label{thm_inverse_bd_final} Let ${\bf \Lambda}>1$ be an arbitrary absolute constant, and let $p(z)$ is as in \eqref{p(z)_initial} for some $p_* \in [1, \infty]$. For $[w]_{A_2 (\R)} \leq \gamma$, consider the operator
\begin{equation*}
  Q_{w, p(z), T} \ddd  w^{1/p(z)} T w^{-1/p(z)} - w^{-1/p'(z)} T w^{1/p' (z)} \, ,
\end{equation*}
	where $T$ is an operator satisfying \eqref{sd_01} for some $\cal{F}$ and is also self-adjoint as an operator on $L^2 (\R)$.

	Then there exists $\epsilon = \epsilon (\gamma, \cal{F}, {\bf \Lambda} ) \in (0,\frac{1}{2})$, with $\lim\limits_{\gamma \to 1} \epsilon (\gamma, \cal{F}, {\bf \Lambda}) = \frac{1}{2}$, such that on  $\Omega_{<\epsilon}$, $I-Q_{w,p(z)}$ has bounded inverse on $L^{p(t)} (\R)$ with operator bound
	\begin{equation}\label{inverse_bd_final}
	  \|(I-Q_{w, p(z), T})^{-1}\|_{p(t),p(t)} \leq 2 \bf{\Lambda} \, ,
	\end{equation}
	and $(I-Q_{w, p(z), T})^{-1}$ is analytic as a map $\Omega_{<\epsilon} \to \cal{L} (L^2 (\R))$. 
\end{Lem}
Lemma \ref{thm_inverse_bd_final_basic} follows from Lemma \ref{thm_inverse_bd_final} by taking $T = \proj{0}{r}{}$, $\mathbf{\Lambda} = 10$ and applying Lemma \ref{proj_calF_ind_r}. As such, this section is dedicated to the proof of Lemma \ref{thm_inverse_bd_final}. The proof strategy will proceed more or less as follows:
\begin{enumerate}
  \item Show $\|Q_{w, p(z), T}\|_{p(t), p(t)}$ uniformly bounded on the strip. 
  \item Show that if we have uniform bounds on $\|( I - \kappa  Q_{w, p(z), T})^{-1}\|_{p(t), p(t)}$ is uniformly bounded on the strip, then $( I - \kappa  Q_{w, p(z), T})^{-1}$ is weakly analytic. 
  \item Chop $( I - \kappa  Q_{w, p(z), T})^{-1}-I$ into small pieces and show we have uniform bounds for $I$ plus the small piece; using the two previous parts of the strategy, we have our function is weakly analytic. We maintain boundedness while adding the small pieces by applying analytic interpolation.
\end{enumerate}

Before we begin, we will need a few preliminary lemmas.
\begin{Prop}
Suppose $X$ is an Banach space and $H,V$ are linear bounded operators from $X$ to $X$. Then,
\begin{eqnarray*}
(I+H+V)^{-1}=(I+H)^{-1}-(I+H+V)^{-1}V(I+H)^{-1}, \\
(I+H+V)^{-1}=(I+H)^{-1}(I+V(I+H)^{-1})^{-1}\,,
\end{eqnarray*}
provided the operators involved are well-defined and bounded in $X$. Moreover, assuming $\|V\|\cdot \|(I+H)^{-1}\|<1$, we get
\begin{equation}\label{sd_33}
\|(I+H+V)^{-1}\|\le  \frac{\|(I+H)^{-1}\|}{1-\|V\|\cdot \|(I+H)^{-1}\|}\,.
\end{equation}
Finally, if $\|V\|<1$, then 
\begin{equation}\label{ssa1}
\|(I+V)^{-1}\|\le \frac{1}{1-\|V\|}\,.
\end{equation}
\end{Prop}
\noindent The proof of this proposition is a straightforward calculation.

We will also need continuity of weighted operators as proved in \cite[Theorem 1.2]{AlexisAD1}.

\begin{Thm}[{\cite[Theorem 1.2]{AlexisAD1}}]\label{t1} Suppose $p\in (1,\infty)$, $[w]_{A_p(\R^d)}<\infty$, $\|f\|_{\rm BMO}<\infty$, and $T$ satisfies \eqref{sd_01}. Consider $w_\delta =we^{\delta f}$. Then, there is $\delta_0(p,[w]_{A_p},\|f\|_{\rm BMO})>0$ such that 
\[
\|w_\delta^{1/p}Tw_\delta^{-1/p}-w^{1/p}Tw^{-1/p}\|_{p,p}<|\delta|C(p,[w]_{A_p},\|f\|_{\rm BMO},\cal{F})
\]
for all $\delta: |\delta|<\delta_0$.
\end{Thm} 

\begindef Suppose $\cal{D}$ is a set of linear functionals acting on a vector space $V$. If $U \subset \C$ is an open set, and $\{a(z)\}_{z \in U}$ is contained within $V$, then $a(z)$ is \textit{weakly analytic} with respect to $\cal{D}$ if $\ell (a (z))$ is analytic for all $\ell \in \cal{D}$.
\smallskip

We will generally be concerned with two cases:
\begin{enumerate}
  \item $V = \bigcup\limits_{p \geq  1} L^p (\R)$ for $p \in (1, \infty)$, and $\cal{D}= \dense$, the space of simple functions with compact support, where $g \in \dense$ acts on $f \in \bigcup\limits_{p \geq 1} L^p (\R)$ by $\langle f , g \rangle$.
  \item $V= \bigcup\limits_{p \geq 1} \cal{L}(L^p (\R))$ and $\cal{D} = \{\langle \cdot f , g \rangle \}_{f, g \, \in \dense}$.
\end{enumerate}

\begin{Lem}\label{Q_weak_analytic_bds}
  Let $T$ be an operator satisfying \eqref{sd_01} that is self-adjoint on $L^2 (\R)$. If $[w]_{A_2 (\R)} \leq \gamma$, then there exists $\epsilon_0 (\gamma) \in (0, \frac{1}{2})$, with $\lim\limits_{\gamma \to 1} \epsilon_0 (\gamma) = \frac{1}{2}$, such that for all $z = t+i y \in N (\Omega_{\leq \epsilon_0 (\gamma)})$, we have
     \begin{equation} \label{bd_muckenhoupt_2}
       [w]_{A_{p(t)} (\R)} , [w]_{A_{p' (t)} (\R)} \lesssim_{\gamma} 1 , \, 
\end{equation}
and
      \begin{equation}\label{bd_Q}
	\|Q_{w, p(z), T}\|_{p,p} \leq C_{\gamma , \cal{F}} \,  
\end{equation}
and $Q_{w, p(z), T}$ is weakly analytic with respect to $\{ \langle \cdot f, g \rangle_{f, g \in \dense}\}$ on $N (\Omega_{\leq \epsilon_0 (\gamma)})$.
\end{Lem}
\begin{proof}
  Initially define $\epsilon_0 (\gamma) \ddd \frac{1}{s(2, \gamma)}- \frac{1}{2}$ where $s$ is as in Lemma \ref{Reverse_Holder_A2}. Then $\epsilon_0 (\gamma) \in (0, \frac{1}{2})$, $\lim\limits_{\gamma \to 1} \epsilon_0 (\gamma) = \frac{1}{2}$, and \eqref{bd_muckenhoupt_2} holds for $z \in \Omega_{\leq \epsilon_0}$; by taking a slightly smaller $\epsilon_0(\gamma) \in (0, \frac{1}{2})$, still with $\lim\limits_{\gamma \to 1} \epsilon_0 (\gamma) = \frac{1}{2}$, then \eqref{bd_muckenhoupt_2} holds for $z \in N (\Omega_{\leq \epsilon_0 (\gamma)})$. 

  If $T$ satisfies \eqref{sd_01}, then 
\[
	\|w^{1/p} T w^{-1/p}\|_{p,p} \leq C_{\gamma , \cal{F}, p} \, ,
\]
for all $p$ satisfying $\left |\frac{1}{2} - \frac{1}{p} \right | \leq \epsilon_0 (\gamma)$. Since $\|w^{1/p} T w^{-1/p}\|_{p,p} = \|T\|_{L^p _w (\R), L^p _w (\R)}$, we can interpolate between the extremal values of $p$ given by $\left |\frac{1}{2} - \frac{1}{p} \right | = \epsilon_0 (\gamma)$ to in fact get 
\begin{equation*}
\|w^{1/p} T w^{-1/p}\|_{p,p} \leq C_{\gamma , \cal{F}} \, 
\end{equation*}
for all $p$ satisfying $\left |\frac{1}{2} - \frac{1}{p} \right | \leq \epsilon_0 (\gamma)$. In particular, 
\begin{equation*}
  \|w^{1/p(z)} T w^{-1/p(z)}\|_{p(t),p(t)} = \|w^{1/p(t)} T w^{-1/p(t)}\|_{p(t),p(t)} \leq C_{\gamma , \cal{F}} \, 
\end{equation*}
for all $z \in \Omega_{\leq \epsilon_0 (\gamma)}$. If we swap the role of $p_*$ with $p_* '$, this has the net effect of swapping the role $p(z)$ with $p'(z)$. Thus if $T$ is self-adjoint on $L^2 (\R)$, then swapping as described and taking adjoints yields 
\[
  \|w^{-1/p'(z)} T w^{1/p'(z)}\|_{p(t),p(t)}  = \|w^{1/p'(z)} T w^{-1/p'(z)}\|_{p'(t), p'(t)} \leq C_{\gamma , \cal{F}} \, 
	\]
	for $z \in \Omega_{ \leq \epsilon_0 (\gamma)}$. Combining all of this together yields \eqref{bd_Q} for $z \in \Omega_{\epsilon_0 (\gamma)}$; by slightly shrinking $\epsilon_0(\gamma) \in (0, 1/2)$, while still requiring $\lim\limits_{\gamma \to 1} \epsilon_0 (\gamma) = \frac{1}{2}$, we get \eqref{bd_Q} for $z \in N (\Omega_{\leq \epsilon_0 (\gamma)})$.

	By Proposition \ref{T_weakly_analytic_prop}, if we again shrinking $\epsilon_0(\gamma) \in (0, \frac{1}{2})$ we may assume without loss of generality that $Q_{w, p(z), T}$ is analytic as a map $N (\Omega_{\leq \epsilon_0 (\gamma)}) \to \cal{L} (L^2 (\R))$, and hence weakly analytic with respect to $\{ \langle \cdot f, g \rangle_{f, g \, \in \dense}\}$ on $N (\Omega_{\leq \epsilon_0 (\gamma)})$.
\end{proof}

\begin{Lem}\label{Inverse_analytic_lem}
  Let $[w]_{ A_2 (\R)} \leq \gamma$, $\epsilon_0 (\gamma)$ is as Lemma \ref{Q_weak_analytic_bds} and let $p(z)$ as in \eqref{p(z)_initial} be given for some $p_* \in [1, \infty]$. If
\[
  \sup\limits_{z \in \Omega_{\leq \epsilon_0 (\gamma)}} \|(I - \kappa Q_{w, p(z),T})^{-1}\|_{p(t),p(t)} < \infty \, , 
\]
then  $(I - \kappa Q_{w, p(z),T})^{-1}$ is weakly analytic with respect to $\{\langle \cdot f, g \rangle\}_{f, g \in \dense }$ on $N(\Omega_{\leq \epsilon_0 (\gamma)})$.
\end{Lem}
\begin{proof}
  Let $F(z) \ddd (I - \kappa Q_{w, p(z), T})^{-1}$. We show that for each $z_0 \in \Omega_{\leq \epsilon_0 (\gamma)}$, there exists a ball $B_{\delta} (z_0)$ on which $F(z)$ is analytic as a map $B_{\delta} (z_0) \to \cal{L} (L^{p(t_0)} (\R))$; this will clearly imply weak analyticity on $N (\Omega_{\leq \epsilon_0 (\gamma)})$. Fix $z_0$, define 
\[
  V(z,z_0) \ddd \kappa (Q_{w, p(z), T} - Q_{w, p(z_0), T})
\]and write 
\begin{multline}\label{borne_series_init}
F(z) = F(z_0) + (F(z_0)^{-1} - V(z,z_0))^{-1} - F(z_0) = F(z_0) +  ((I - F(z_0) V(z,z_0))^{-1} - I) F(z_0) \\
= (I - F(z_0) V(z,z_0))^{-1} F(z_0)\, .
\end{multline}
We claim that there exists a ball $B_{\delta} (z_0)$ on which $Q_{w,p(z),T}$, and hence $V(z,z_0)$, is analytic as a map $B_{\delta} (z_0) \to \cal{L} (L^{p(t_0)} (\R))$. Given the claim, we can then take $\delta$ small enough so that $\|V(z,z_{0})\|_{p(t_0),p(t_0)} < \frac{1}{2 \|F(z_0)\|_{p(t_0), p(t_0)}}$. Then \eqref{borne_series_init} yields
  \[
    F(z) = (I - F(z_0) V(z,z_0))^{-1} F(z_0) = \sum\limits_{k=0}^{\infty} (F(z_0) V(z,z_0) )^k F(z_0) \, , 
  \]
  where the sum converges uniformly in $\cal{L} (L^{p(t_0) (\R)})$. Since each partial sum is analytic in $z$ and the sum converges uniformly, we have $F(z)$ is analytic as a map $B_{\delta} (z_0) \to \cal{L} (L^{p(t_0)} (\R))$.

  We must now show there exists a ball $B_{\delta} (z_0)$ on which $Q_{w,p(z),T}$ is analytic as a map $B_{\delta} (z_0) \to \cal{L} (L^{p(t_0)} (\R))$. As per Lemma \ref{Q_weak_analytic_bds}, $Q_{w, p(z), T}$ is weakly analytic with respect to $\{ \langle \cdot f, g \rangle\}_{f, g \in \dense}$ on $N (\Omega_{\leq \epsilon_0 (\gamma)})$, and so by Proposition \ref{weak_to_strong_analytic} it suffices to show there exists $\delta> 0$ such that for all $z \in B_{\delta} (z_0)$, we have $w^{1/p(z)} T w^{-1/p(z)}, w^{-1/p'(z)} T w^{1/p(z)} \in \cal{L} (L^{p(t_0)} (\R))$. We show this for $w^{1/p(z)} T w^{-1/p(z)}$, as the proof for $w^{-1/p'(z)} T w^{1/p'(z)}$ will follow similarly. Write
  \begin{multline*}
    \|w^{1/p(z)} T w^{-1/p(z)}\|_{p(t_0), p(t_0)} = \|w^{1/p(t)} T w^{-1/p(t)}\|_{p(t_0), p(t_0)} \\
    \leq \|w^{1/p(t)} T w^{-1/p(t)}- w^{1/p(t_0)} T w^{-1/p(t_0)}\|_{p(t_0), p(t_0)} + \|w^{1/p(t_0)} T w^{-1/p(t_0)}\|_{p(t_0), p(t_0)} \, .
  \end{multline*}
  The last term is clearly finite. As for $\|w^{1/p(t)} T w^{-1/p(t)}- w^{1/p(t_0)} T w^{-1/p(t_0)}\|_{p(t_0), p(t_0)}$, by \eqref{bd_muckenhoupt_2} we have $w \in A_{p(t)}$. Thus we can apply the continuity of weighted operators Theorem \ref{t1} with $f = \log w$, to argue that for $|t-t_0|$ sufficiently small, we have $\|w^{1/p(t)} T w^{-1/p(t)}- w^{1/p(t_0)} T w^{-1/p(t_0)}\|_{p(t_0), p(t_0)} \leq 1$. This completes the proof.
\end{proof}

\begin{Prop}[\cite{AlexisAD1}]\label{ss1_extra} Let $[w]_{ A_2 (\R)} \leq \gamma$ and let $p(z)$ as in \eqref{p(z)_initial} be given for some $p_* \in [1, \infty]$ such that $\Omega_{[-1, 1]} \subset \Omega_{\leq \epsilon_0 (\gamma)}$, where $\epsilon_0 (\gamma)$ is as in Lemma \ref{Q_weak_analytic_bds}. Suppose 
\begin{equation*}
  \sup_{-1 \le \Re z\le 1}\|Q_{w,p(z),T}\|_{p(t),p(t)}<\infty\,,
 \end{equation*}
 where $t\ddd \Re z$. If there is a positive number ${\bf \Lambda} > 1$ such that
\[
  \|(I - \kappa Q_{w, p(z),T})^{-1}\|_{p(t),p(t)}\le 2{\bf \Lambda}
\]
for all $z \in \Omega_{[-1,1]}$, then there is a $t_*({\bf \Lambda})\in (0,1]$,  so that
\[
  \|(I - \kappa Q_{w, p(z),T})^{-1}\|_{p(t),p(t)}\le {\bf \Lambda}
\]
for all $z \in \Omega_{[-t_* , t_*]}$.
\end{Prop}

\begin{proof}
  We notice that $Q_{w,p(iy),T}$ is bounded and antisymmetric operator in Hilbert space $L^2(\R)$. Therefore, $\|(I - \kappa Q_{w, p(iy),T})^{-1}\|_{2,2}\le 1$. By Lemma \ref{Inverse_analytic_lem}, $(I - \kappa Q_{w, p(z),T})^{-1}$ is weakly analytic and continuous in the sense of Stein (p.209, \cite{bsh}). Applying Stein's interpolation theorem on the strips $\Omega_{[0,1]}$ and $\Omega_{[-1,0]}$, we get
\[
  \|(I - \kappa Q_{w, p(z),T})^{-1}\|_{p(t),p(t)}\le \exp\left(
  \frac{\sin(\pi |t|)}{2}\int_\R \frac{\log (2{\bf \Lambda)}}{\cosh (\pi y)+\cos(\pi |t|)}dy\right)=1+O(|t|), \quad t\to 0\, .
\]

\end{proof}
\noindent{\bf Remark.} We emphasize here that positive $t_*$ does not depend on $w$.

\begin{proof}[Proof of Lemma \ref{thm_inverse_bd_final}]
  The reader may also consult a similar, but simpler proof in \cite[Proof of Lemma 3.6]{AlexisAD1}.

  We just need to find $\epsilon(\gamma,\cal{F}, \mathbf{\Lambda})$ such that on $\Omega_{< \epsilon(\gamma,\cal{F}, \mathbf{\Lambda})}$, we have $(I-Q_{w,p(z),T})^{-1}$ is bounded on $L^{p(t)} (\R)$ and analytic as an $\cal{L} (L^2 (\R))$-valued function. 

  In \eqref{p(z)_initial}, we take parameter $p_*$ as follows: define $p_* ^{(1)}$ by $\frac{1}{p_* ^{(1)}} = \frac{1}{2} - \epsilon_0 (\gamma)$, where $\epsilon_0 (\gamma)$ is as in Lemma \ref{Q_weak_analytic_bds}, and set $p_1(z)\ddd p(z)$; note then $\Omega_{[-1,1]} \subset \Omega_{\leq \epsilon_0 (\gamma)}$. Consider $Q^{(j)}_{w,p(z),T}\ddd jQ_{w,p(z),T}/N$, $j=1,\ldots,N$, where $N$ is large and will be fixed later (it will depend on $\gamma$, $\cal{F}$, $\bf{\Lambda}$ only).

We take $N$ to satisfy
\begin{equation}\label{sd_44_extra}
	1-C_{\gamma, \cal{F}} {\bf \Lambda}/N>1/2\, ,
\end{equation}
	where $C_{\gamma, \cal{F}}$ is as in \eqref{bd_Q}. 
	Next, by \eqref{ssa1} and \eqref{bd_Q}, \[
	  \|(I- Q^{(1)}_{w, p(t+iy),T})^{-1} \|_{p(t),p(t)}\le \frac{1}{1-C_{\gamma, \mathcal{F}} /N}\le \frac{1}{1-C_{\gamma, \mathcal{F}} {\bf \Lambda}/N}\le 2\le  2{\bf \Lambda} \, ,
\]
	since ${\bf \Lambda}>1$.

	We continue with an inductive argument in which the bound for $\{Q^{(j)}_{w,p(z)}\}$ provides the bound for $\{Q^{(j+1)}_{w,p(z),T}\}$ when  $j=1,\ldots,N-1$. \smallskip

	{$\bullet$ \bf Base of induction: handling  $Q^{(1)}_{w,p(z)}$.} Apply Proposition \ref{ss1_extra} with $\kappa=1/N$ to get an absolute constant $t_*$ so that
\[
	\|(I- Q^{(1)}_{w, p(t+iy),T})^{-1} \|_{p(t),p(t)}\le {\bf \Lambda}
\]
for $t\in [- t_*, t_*]$ and $y\in \R$. Next, we use \eqref{sd_33} with $H=-Q^{(1)}_{w,p(t+iy)}$ and $V=-N^{-1}Q_{w,p(t+iy)}$. This gives
\begin{equation}\label{sa4_extra}
  \|(I- Q^{(2)}_{w, p(t+iy),T})^{-1} \|_{p(t),p(t)}\le \frac{{\bf \Lambda}}{1-C_{\gamma, \mathcal{F}} {\bf \Lambda}/N}\le 2{\bf \Lambda}, \quad t\in [- t_*, t_*]
\end{equation}
by \eqref{sd_44_extra}.

That finishes the first step. Next, we will explain how estimates on $Q^{(2)}_{w,p(z)}$ give bounds for $Q^{(3)}_{w,p(z)}$.

{$\bullet$ \bf Handling  $Q^{(2)}_{w,p(z),T}$.} In Proposition \ref{ss1_extra}, we now take  $\kappa=\kappa_2\ddd 2/N, p^{(2)}_*\ddd p_1(t_*)=p(t_*)$ (here $p(t_*)$ is obtained at the previous step) and compute new $p_2(z),p_2'(z)$ by \eqref{p(z)_initial}:
\begin{equation}\label{sad66_extra}
  \frac{1}{p_2(z)}\ddd \frac{z}{p ^{(2)} _{*} }+\frac{1-z}{2}=\frac{zt_*}{p_*}+\frac{1-zt_*}{2}=\frac{1}{p_1(zt_*)}=\frac{1}{p(zt_*)}\,.
\end{equation}
Therefore, when $z$ belongs to $-1 \leq \Re z \leq 1$, $zt^*$ belongs to $-t_* \leq \Re z \leq t_*$, $p_2(z)=p(zt_*)$ and $p_2 (z)$ still satisfies $\Omega_{[-1,1]} \subset \Omega_{\leq \epsilon_0 (\gamma)}$. In this domain, the estimate \eqref{sa4_extra} can be rewritten as
\[
   \|(I- Q^{(2)}_{w,p_2(t+iy),T})^{-1}\|_{p_2(t),p_2(t)}\le 2{\bf \Lambda}, \quad t\in [-1,1],\quad y\in \R\,,
\]
	where $p_2(z)$ is different from $p_1(z)=p(z)$ only by the choice of parameter  $p_*$ in \eqref{p(z)_initial} and is in fact a rescaling of the original $p(z)$ as follows from  \eqref{sad66_extra}. From Proposition \ref{ss1_extra}, we have
\[
	\|(I- Q^{(2)}_{w,p_2(t+iy),T})^{-1}\|_{p_2(t),p_2(t)}\le {\bf \Lambda}
\]
for $t\in [- t_*, t_*],y\in \R$. We use the perturbative bound \eqref{sd_33} one more time with $H=-Q^{(2)}_{w,p_2(t+iy),T}$ and $V=-N^{-1}Q_{w,p_2(t+iy),T}$ to get
\[
   \|(I- Q^{(3)}_{w,p_2(t+iy),T})^{-1}\|_{p_2(t),p_2(t)}\le 2{\bf \Lambda}
\]
for $t\in [- t_*, t_*],y\in \R$.

	{$\bullet$ \bf Induction in $j$ and  the bound for $Q^{(N)}_{w,p(z),T}$.} Next, we take $p_*^{(3)}\ddd p^{(2)}(t_*)$ and repeat the process in which the bound
\[
   \|(I- Q^{(j)}_{w,p_j(t+iy),T})^{-1}\|_{p_j(t),p_j(t)}\le 2{\bf \Lambda}, \quad z=t+iy\in \Omega_{[-1,1]} \, ,
\]
	implies
\[
  \|(I-Q^{(j+1)}_{w,p_{j+1}(t+iy),T})^{-1}\|_{p_{j+1}(t),p_{j+1}(t)}\le 2{\bf \Lambda} , \quad z=t+iy\in \Omega_{[-1,1]} \, .
\]
	  Notice that each time the new $p_j(z)$ is in fact a rescaling of the original $p(z)$ by $t_*^{j-1}$ as can be seen from a calculation analogous to  \eqref{sad66_extra}. In $N-1$ steps, we get
\begin{align*}
  \|(I-Q^{(N)}_{w,p_{N-1}(t+iy),T})^{-1}\|_{p_{N-1}(t),p_{N-1}(t)}\le  2{\bf \Lambda}\, , \quad z = t + i y \in \Omega_{[-t_*,t_*]}\, .
\end{align*}
Thus recalling that $p_{N-1}(z)=p(t_*^{N-2}z)$, one has
\[
  \|(I-Q^{(N)}_{w,p(t_*^{N-1} z),T})^{-1}\|_{p(t_*^{N-1} t),p(t_*^{N-1} t)}\le  2{\bf \Lambda}\,.
\]
Since $Q^{(N)}_{w,p(t_*^N z )}=Q_{w,p(t_*^N z ),T}$, we get \eqref{inverse_bd_final} with
\[
  \epsilon (\gamma, \mathbf{\Lambda}, \cal{F}) \ddd t_* ^{N-1} \epsilon_0 (\gamma).
\]
The estimates \eqref{sd_44_extra} implies that we can take
$
N\sim C_{\gamma, \cal{F}, \mathbf{\Lambda}}\,.
$

Thus, we showed there exists $\epsilon(\gamma, \cal{F}, \mathbf{\Lambda}) > 0$ such that $\|(I-Q_{w,p,T})^{-1}\|_{p,p} \leq 2 \mathbf{\Lambda}$ for all $|\frac{1}{p} - \frac{1}{2}| < \epsilon (\gamma, \cal{F}, \mathbf{\Lambda})$.  

In fact, we may assume without loss of generality that $\lim\limits_{\gamma \to 1} \epsilon(\gamma, \cal{F}, \mathbf{\Lambda}) = \frac{1}{2}$: indeed, it suffices to show that for any $\widetilde{\epsilon} \in (0,\frac{1}{2})$, we can choose $\gamma-1$ sufficiently small so that $\|(I-Q_{w,p(z),T})^{-1}\|_{p(t),p(t)} \leq 2 \mathbf{\Lambda}$ for all $z$ satisfying $|\frac{1}{p(t)} - \frac{1}{2}| < \widetilde{\epsilon}$. Note that if $\|Q_{w,p(z), T}\|_{p(t),p(t)} \leq \frac{1}{2}$, then by geometric sum
\[
  \|(I-Q_{w,p(z),T})^{-1}\|_{p(t),p(t)} \leq \sum\limits_{k=0}^{\infty} \|Q_{w,p(z)}\|_{p(t),p(t)} ^k \leq 2 \leq 2 \mathbf{\Lambda} \, .
	\]
Thus it suffices to show we can take $\gamma$ sufficiently small so that \[
  \|Q_{w,p(z), T}\|_{p(t),p(t)} \leq \frac{1}{2} \, 
	\]
	for $|\frac{1}{p(t)} - \frac{1}{2}| < \widetilde{\epsilon}$. In fact, by Stein's analytic interpolation it suffices to check this at the vertical lines such that $|\frac{1}{p(t)} - \frac{1}{2}| = \widetilde{\epsilon}$. By duality and the triangle inequality this in turn will follow from showing
\[
  \|w^{1/p(z)}T w^{-1/p(z)} -T \|_{p(t),p(t)} \leq \frac{1}{4} \, 
	\]
	for $z= t+iy$ such that $|\frac{1}{p(t)} - \frac{1}{2}| = \epsilon(\gamma)$. Fix one the values $t$ satisfying the last equality: if $\gamma -1$ is sufficiently small, then by Lemma \ref{sd_ll} we have $\|\log w\|_{\rm BMO} \lesssim \sqrt{\gamma -1}$, which can be made arbitrarily small. Apply Theorem \ref{t1} with $f= \log w$ to get 
	\[
		\|w^{1/p} T w^{-1/p} - T\|_{p,p} \lesssim_{p, \cal{F}} \sqrt{\gamma -1} \, .
		\]
		We can now choose $\gamma$ sufficiently small that this is at most $\frac{1}{4}$. Thus, we can choose $\epsilon(\gamma, \cal{F}, \mathbf{\Lambda})$ so that as $\gamma \to 1$, we get $\epsilon(\gamma, \cal{F}, \mathbf{\Lambda}) \to \frac{1}{2}$.

As for showing $(I-Q_{w,p (z),T})^{-1}$ is analytic as an $\cal{L} (L^2 (\R))$-valued function, by Lemma \ref{Inverse_analytic_lem} and Proposition \ref{weak_to_strong_analytic}, it suffices to show $(I-Q_{w,p(z),T})^{-1}$ is bounded on $L^2 (\R)$ for $z \in \Omega_{\leq \epsilon(\gamma, \cal{F}, \mathbf{\Lambda})}$. Fix one such $z$; we just showed previously that
\[
  \|(I-Q_{w,p(z)})^{-1}\|_{p(t), p(t)} \leq 2 \mathbf{\Lambda} \, .
	\]
	However the proof just as easily applies to weight $w^{-1}$, operator $-T$ and H\"older index $p'(z)$ (as opposed to $p(z)$) and so we get
	\[
	  \|(I-Q_{w,p(z), T})^{-1}\|_{p'(t), p'(t)} = \|(I - Q_{w^{-1}, p'(z), -T})^{-1} \|_{p'(t),p'(t)} \leq 2 \mathbf{\Lambda} \,  \quad \text{for } z\in \Omega_{< \epsilon(\gamma, \cal{F}, \mathbf{\Lambda})}.
	\]
	Interpolate between both estimates to get
\[
  \|(I - Q_{w, p(z), T})^{-1}\|_{2,2} \leq 2 \mathbf{\Lambda} \, .
	\]
This completes the proof.
\end{proof}

\appendix

\section{A detour through complex analysis: proof of Proposition \ref{solve_for_X_Steklov_fullp_statement}}\label{complex_analysis_section}

We recall the following well-known lemma, which we prove for the sake of completeness.
\begin{Prop} \label{weak_to_strong_analytic}
  Let $B$ be a Banach space, let $\cal{D} \subset B^*$ be a dense subset of the dual space and $U \subset \C$ an open set. If $a:U \to B$ is weakly analytic with respect to $\cal{D}$, then $a:U \to B$ is analytic.
\end{Prop}
\begin{proof}
	We adapt the proof of \cite[Theorem 8.20]{einsiedler2017functional}.

	Let $\ell \in \cal{D}$ so that $\ell \circ a(z)$ is analytic.

	Fix $\zeta \in U$ and let $|h|< \epsilon$ for $\epsilon$ sufficiently small. By the Cauchy integral formula,
	\[
		\ell \circ a (\zeta+h) = \frac{1}{2 \pi i} \oint\limits_{|z - \zeta| = \epsilon} \frac{\ell \circ a(z)}{z- (\zeta + h)} \, dz \, .
		\]

	Then \begin{equation} \label{eq:1}
		\ell \circ a (\zeta + h) - \ell \circ a (\zeta) = \frac{h}{2 \pi i} \oint\limits_{|z - \zeta| = \epsilon} \frac{ \ell \circ a(z)}{(z - (\zeta + h))(z - \zeta)} \, dz \, .
	\end{equation}

	For $h \neq h'$ with $0 < |h|, |h'| < \epsilon$, define the second order difference quotient
	\[
		x(h,h') \ddd \frac{1}{h - h'} \left ( \frac{a(\zeta+ h ) - a(\zeta)}{h} - \frac{a(\zeta + h') - a(\zeta)}{h'} \right ) \, .
		\]
	It suffices to show $x(h,h')$ is uniformly bounded in $h,h'$, for both sufficiently small. Indeed, uniform boundedness in $B$ would then yield
	\[
		h \mapsto \frac{a(\zeta + h) - a(\zeta)}{h}
		\]
		is Lipschitz for $h$ near $0$, and so has a limit as $h \to 0$.

	By \eqref{eq:1},
	\begin{align*}
		\ell (x(h,h')) 
		&= \frac{1}{2\pi i} \oint\limits_{|z- \zeta| = \epsilon} \frac{\ell \circ a(z)}{(z - \zeta)(z - (\zeta + h))(z - (\zeta + h'))} \, dz \, .
	\end{align*}
	For $|h|, |h'| < \epsilon/2$, the denominator in the integral is bounded uniformly away from $0$, and the numerator is bounded above by $\|\ell\|_{B^*} \sup\limits_{|z - \zeta| = \epsilon}  \|a(z)\|_{B}$, which is finite by the uniform boundedness principle. Whence
	\[
		\ell (x(h,h')) \leq M \|\ell\|_{B^*} 
	\]
	where $M$ does not depend on $\ell$. By duality
	\[
		\|x(h,h')\|_{B} \leq M \, .
	\]
\end{proof}

	 Fix $\chi$ a characteristic function of some finite interval $I$, and let $X_p \ddd w^{1/p} (P(r, \lambda; w) - \e{i \lambda r})$, like in Section \ref{weight_A2_section}.
 \begin{Prop}\label{weak_analytic_weight_function}
   Suppose $[w]_{A_2 (\R)} \leq \gamma$, $p_* \in [1, \infty]$ and  $p(z)$ is as in \eqref{p(z)_initial}. Then $z \mapsto \chi w^{1/p(z) - 1/2}$ is analytic as a map \[
     \Omega_{(-1,1)} \to L^2 (\R) \, .
			\]
 \end{Prop}
\begin{proof}
  Without loss of generality, assume $p_* = \infty$; the general case follows by a rescaling argument.

	We first compute
	\[
	  \|\chi w^{1/p(z) - 1/2}\|_{L^{q} (\R)} ^q = \int\limits_{I} w^{-qt/2} \, d \lambda \, . 
	\] Since $[w]_{A_2 (\R)} , \, [w^{-1}]_{A_2 (\R)} \leq \gamma$, then by Lemma \ref{Reverse_Holder_A2} there exists $q (\gamma) > 2$ such that for all $|t| < 1$, $w^{-qt/2} \in A_2 (\R)$. Since $A_2 (\R) \subset L^1 _{loc} (\R)$, then $\int\limits_{I} w^{-qt/2} \, d \lambda < \infty$ and so $\chi w^{1/p(z) - 1/2} \in L^q (\R)$ for some $q > 2$ by \eqref{p(z)_initial}. 

	Now let us show analyticity: write 
	\begin{align*}
		\chi w^{1/p(z) - 1/2} &= (\chi w^{1/p(z) - 1/p(z_0)}) (\chi w^{1/p(z_0) - 1/2}) \\
		&= \sum\limits_{k=0}^{\infty} \frac{(-1)^k}{k! \, 2^k} (\chi \log w)^k (z-z_0)^k  (\chi w^{1/p(z_0) - 1/2})\, 
	\end{align*}
	by Taylor expansion of $ w^{1/p(z) - 1/p(z_0)} = \e{-\frac{(z-z_0)}{2} \log w }$.
	We simply need to check the series converges uniformly in $L^2 (\R)$ norm for $|z-z_0|$ sufficiently small; by Stirling's approximation, it suffices to show 
	\[
		\|(\chi \log w)^k (\chi w^{1/p(z_0) - 1/2})\|_{L^2 (\R)} \lesssim_{\log w, I, \gamma } C(\log w, I, \gamma) ^k k^k \, . 
		\]
		 
		Apply H\"older's inequality to bound
		\[
			\|(\chi \log w)^k (\chi w^{1/p(z_0) - 1/2})\|_{L^2 (\R)} \lesssim \| \chi w^{1/p(z_0) - 1/2}\|_{L^q (\R)} \|(\chi \log w)^k\|_{L^{2 (q/2)'} (\R)} ^{1/2 - 1/q} \, ,
			\] where $(q/2)'$ is the dual exponent to $q/2$.

			Since we already showed $\chi w^{1/p(z) - 1/2} \in L^q (\R)$, then we will be done once we show $\| (\chi \log w)^k\|_{L^{2 (q/2)'} (\R)} \lesssim_{\log w, I, \gamma} C(\log w , I, \gamma)^k k^k$. If $p = 2 (q/2)'$, write
			\[
				\| (\chi \log w)^k\|_{L^{p} (\R)} = \| (\log w)^k\|_{L^{p} (I)} = \|\log w\|_{L^{p k} (I)} ^{k} \, .
				\]
				It suffices to show $\|\log w\|_{L^{p k} (I)} \lesssim_{\log w, I, \gamma} k$.

				Write 
				\[
				  \|\log w\|_{L^{p k} (I)} \leq  \|\log w - \langle \log w \rangle_{I} \|_{L^{p k} (I)} + \langle \log w \rangle_{I} |I|^{1/(pk)} 
				\]
				The last term is $\lesssim_{\log w , I, p} k$. For the first term, the John-Nirenberg Theorem \ref{John-Nirenberg_Thm} yields
				\[
				  \|\log w - \langle \log w \rangle_{I} \|_{L^{p k} (I)} \leq (c |I|   p k  \|\log w \|_{\rm BMO})^{1/(pk)} \Gamma(p k) ^{1/(pk)} \lesssim_{\log w, I, \gamma} \Gamma (p k) ^{1/(pk)}
					\]
					where $\Gamma$ function is the usual analytic extension of factorial. Finally, Stirling's formula yields
					\[
						\|\log w - \langle \log w \rangle_{I} \|_{L^{p k} (I)} \lesssim_{\log w, I, \gamma} \Gamma (p k) ^{1/pk} \lesssim pk \lesssim_p k \, ,						\] thereby completing the proof.
\end{proof}

\beginrmk To avoid repeating similar arguments, from now onwards if we write that a function or operator is analytic (or weakly analytic) thanks to a ``John-Nirenberg argument,'' the reader should understand this as meaning that analyticity follows from an argument similar to the one above where we showed $\chi w^{1/p(z) - 1/2}$ was analytic as a map $\{|\Re z| < 1\} \to L^2 (\R)$. \smallskip

\begin{Lem}\label{integrability_Ap_weights_analytic}
  Suppose $[w]_{A_2 (\R)} \leq \gamma$, $p_* \in [1, \infty]$ and  $p(z)$ is as in \eqref{p(z)_initial}. Then there exists $\epsilon(\gamma) \in (0, \frac{1}{2})$, with $\lim\limits_{\gamma \to 1} \epsilon(\gamma) = \frac{1}{2}$, such that $z \mapsto w^{1/p(z)} - w^{-1/p'(z)}$ is analytic as a map $\Omega_{ < \epsilon (\gamma)} \to L^2 (\R)$.
\end{Lem}
\begin{proof}
  We first note that $w^{1/p(z)} - w^{-1/p'(z)} \in L^2 (\R)$ for $z \in \Omega_{<\epsilon}$ for some $\epsilon > 0$. Indeed,
	\[
		\left | w^{1/p(z)} - w^{-1/p'(z)} \right | = \left |w^{-1/p'(z)} \right | \left | w-1 \right | = w^{-1/p(t)} \left |w-1 \right | = \left |w^{1/p(t)} - w^{-1/p'(t)} \right | \, .
		\]
		By Lemma \ref{lemma_mu_weight_Lp}, this belongs to $L^2 (\R)$ so long as $z \in \Omega_{< \epsilon (\gamma)}$, where $\epsilon(\gamma) \in (0,\frac{1}{2})$ with $\lim\limits_{\gamma \to 1} \epsilon(\gamma) = \frac{1}{2}$.

		By a John-Nirenberg argument, $w^{1/p(z)}$ and $w^{-1/p'(z)}$ are weakly analytic with respect to $\dense$ on $\Omega_{< \epsilon (\gamma)}$. By Proposition \ref{weak_to_strong_analytic}, this means $w^{1/p(z)} - w^{-1/p'(z)}$ is analytic as a map $\Omega_{< \epsilon (\gamma)} \to L^2 (\R)$.
\end{proof}

\begin{Prop}\label{T_weakly_analytic_prop}
  Suppose $[w]_{A_2 (\R)} \leq \gamma$, $T$ is an operator satisfying \eqref{sd_01} for some $\cal{F}$, and $p(z)$ is as in \eqref{p(z)_initial} for some $p_* \in [1, \infty]$. Then there exists $\epsilon(\gamma, \cal{F}) \in (0, \frac{1}{2}]$, with $\lim\limits_{\gamma \to 1} \epsilon(\gamma, \cal{F}) = \frac{1}{2}$, such that $w^{1/p(z)} T w^{-1/p(z)}$ and $w^{-1/p'(z)} T w^{1/p'(z)}$ are analytic as maps $\Omega_{< \epsilon (\gamma, \cal{F})} \to \cal{L}(L^2 (\R))$.
\end{Prop}
\begin{proof}
  We will prove the portion of the proposition involving $w^{1/p(z)} T w^{-1/p(z)}$; the result for $w^{-1/p'(z)} T w^{1/p'(z)}$ follows by then replacing $w$ by $w^{-1}$ and $p_*$ by $p_* '$.

  A John-Nirenberg argument shows that $w^{1/p(z)} T w^{-1/p(z)}$ is weakly analytic with respect to $\{\langle \cdot f, g \rangle \}_{f,g \, \in \dense}$ for all $z= t+iy$ where $w \in A_{p(t)}$. By Proposition \ref{weak_to_strong_analytic}, $w^{1/p(z)} T w^{-1/p(z)}$ is then analytic as a map $\Omega_{<\epsilon} \to \cal{L}(L^2 (\R))$ if we can show $w^{1/p(z)} T w^{-1/p(z)} \in \cal{L} (L^2 (\R))$ on $\Omega_{< \epsilon}$ for some $\epsilon > 0$. To see this latter part, notice that 
	\[
		\|w^{1/p(z)} T w^{-1/p(z)}\|_{2,2} =  \|w^{1/p(t)} T w^{-1/p(t)}\|_{2,2} = \|(w^{2/p(t)})^{1/2} T (w^{2/p(t)})^{-1/2}\|_{2,2} \, .
	      \] By Lemma \ref{Reverse_Holder_A2}, there exists $\epsilon(\gamma, \cal{F})$ such that when $t + 0i \in \Omega_{< \epsilon(\gamma, \cal{F})}$, we have $w^{2/p(t)} \in A_2 (\R)$. By \eqref{sd_01}, the operator is then indeed bounded on $L^2 (\R)$. 
\end{proof}

The above Proposition \ref{T_weakly_analytic_prop}, combined with Lemma \ref{proj_calF_ind_r}, yield the following.

\begin{Cor}\label{projections_analytic_nearL2}
  Suppose $[w]_{A_2 (\R)} \leq \gamma$ and $p(z)$ is as in \eqref{p(z)_initial} for some $p_* \in [1, \infty]$. Then there exists $\epsilon(\gamma)\in (0, \frac{1}{2})$, with $\lim\limits_{\gamma \to 1} \epsilon(\gamma) = \frac{1}{2}$, such that $w^{1/p(z)} \proj{0}{r}{} w^{-1/p(z)}, w^{-1/p'(z)} \proj{0}{r}{} w^{1/p'(z)}$ are analytic as maps $\Omega_{<\epsilon (\gamma)} \to \cal{L}(L^2 (\R))$.
\end{Cor}

\beginrmk In what follows, we will need to show some $\epsilon(\gamma) \in (0,\frac{1}{2})$ exists, such that $\lim\limits_{\gamma \to 1} \epsilon (\gamma) = \frac{1}{2}$. In our reasoning, we will consider various other such functions $\epsilon (\gamma)$. We consider only finitely many such functions, so by taking the minimum of all of them we will assume without loss of generality that all the $\epsilon(\gamma)$'s are the same.\smallskip

\begin{Cor}\label{Krein_Steklov_RHS_analytic}
  Suppose $[w]_{A_2 (\R)} \leq \gamma$ and $p(z)$ is as in \eqref{p(z)_initial} for some $p_* \in [1, \infty]$. Then there exists $ \epsilon(\gamma) \in (0,\frac{1}{2})$ with $\lim\limits_{\gamma \to 1} \epsilon(\gamma) = \frac{1}{2}$ such that 	\[
		\chi (I-Q_{w, p(z)})^{-1} w^{-1/p' (z)} \P_{[0,r]} w^{1/p'(z)} (w^{1/p (z)} - w^{-1/p' (z)}) \e{i \lambda r}
	      \] is analytic as a map $\Omega_{< \epsilon (\gamma)} \to L^2 (\R)$.
\end{Cor}
\begin{proof}
  By Corollary \ref{projections_analytic_nearL2}, Lemma \ref{integrability_Ap_weights_analytic} and Lemma \ref{thm_inverse_bd_final} with $T= \proj{0}{r}{}$ and e.g. $\mathbf{\Lambda} = 10$, we can choose $\epsilon (\gamma) \in (0, \frac{1}{2})$, with $\lim\limits_{\gamma \to 1} \epsilon (\gamma) = \frac{1}{2}$, such that $w^{-1/p' (z)} \P_{[0,r]} w^{1/p'(z)}$, $w^{1/p (z)} - w^{-1/p' (z)}$ and $(I-Q_{w, p(z)})^{-1}$ are analytic as elements or operators on $L^2 (\R)$, for $\Omega_{< \epsilon (\gamma)}$. A difference quotient computation then yields the Corollary statement. 
\end{proof}

\begin{proof}[Proof of Proposition \ref{solve_for_X_Steklov_fullp_statement}]
  We begin with \eqref{functional_eqn_A2}. By applying Lemma \ref{proj_calF_ind_r} and Lemma \ref{thm_inverse_bd_final} with $T = \proj{0}{r}{}$ and e.g.$\ \mathbf{\Lambda} = 10$, we can invert $I-Q_{w,p}$ to get \eqref{solve_for_X_Steklov_fullp} for all $p \in [2, \infty)$ satisfying $|\frac{1}{2} - \frac{1}{p} | < \epsilon (\gamma)$, where $\epsilon (\gamma) \in (0,\frac{1}{2})$ and $\lim\limits_{\gamma \to 1} \epsilon (\gamma) = \frac{1}{2}$. So for all $p\geq 2$ satisfying $|\frac{1}{2} - \frac{1}{p} | < \epsilon (\gamma)$, we have
	\begin{equation}\label{test_X}
	\langle \chi X_p, f \rangle = - \langle \chi (I-Q_{w,p})^{-1} w^{-1/p'} \P_{[0,r]} w^{1/p'} (w^{1/p} - w^{-1/p'}) \e{i \lambda r} , f \rangle
	\end{equation}
	for all $f \in \dense$.

	Now replace $p$ by $p(z)$ with $p_* = \infty$. Then by Corollary \ref{Krein_Steklov_RHS_analytic}, we have  
	\[
		\langle \chi (I-Q_{w,p(z)})^{-1} w^{-1/p'(z)} \P_{[0,r]} w^{1/p'(z)} (w^{1/p (z)} - w^{-1/p' (z)}) \e{i \lambda r} , f \rangle
		\]
		is analytic on $\Omega_{< \epsilon (\gamma)}$. Recall $X_2 \in L^2 (\R)$ by Proposition \ref{regularity_P}. Thus, if we write $\langle \chi X_{p(z)}, f \rangle = \langle \chi w^{1/p(z) - 1/2} X_2, f \rangle$, then by Proposition \ref{weak_analytic_weight_function} and that $X_2 \in L^2 (\R)$, we may assume $\langle \chi X_{p(z)}, f \rangle$ is analytic on the same region.

		Thus \eqref{test_X} shows two analytic functions are equals when $z$ is in the interval $\Omega_{< \epsilon (\gamma)} \cap \R$. Thus we must conclude that they are in fact equal on the strip $\Omega_{< \epsilon (\gamma)}$, i.e.\
\[
	\langle \chi X_{p(z)}, f \rangle = - \langle \chi (I-Q_{w,p(z)})^{-1} w^{-1/p'(z)} \P_{[0,r]} w^{1/p' (z)} (w^{1/p (z)} - w^{-1/p' (z)}) \e{i \lambda r} , f \rangle
	\]
	on $\Omega_{< \epsilon (\gamma)}$ for all $f \in \dense$. Since $\chi$ is the indicator of an arbitrary finite interval, then by duality and density of $\dense$ in $L^p (\R)$,
	\[
		X_{p(z)} = -(I-Q_{w,p(z)})^{-1} w^{-1/p'(z)} \P_{[0,r]} w^{1/p' (z)} (w^{1/p (z)} - w^{-1/p' (z)}) \e{i \lambda r} \,
		\]
		for $z \in \Omega_{< \epsilon (\gamma)}$. Taking $z = t$ yields \eqref{solve_for_X_Steklov_fullp} for $|\frac{1}{p} - \frac{1}{2}| < \epsilon (\gamma)$. 
\end{proof}

\beginrmk Both sides of \eqref{solve_for_X_Steklov_fullp} always makes sense as elements of $L^2 (\R)$ for  $|\frac{1}{p} - \frac{1}{2}| < \epsilon (\gamma)$. \smallskip

\bibliographystyle{plain}
\bibliography{bibfile}

\end{document}